\renewenvironment{proof}{\noindent{\it Proof.}}{\hfill$\square$}
\newcommand{\ds}{\,\mbox{d}s}
\numberwithin{equation}{section}
\numberwithin{figure}{section}
\numberwithin{theorem}{section}
\definecolor{lightgreen}{rgb}{0.22,0.50,0.25}
\definecolor{lightblue}{rgb}{0.22,0.45,0.70}
\definecolor{mygray}{rgb}{0.7,0.7,0.7}
\newcommand\cero{\boldsymbol{0}}
\newcommand{\by}{\boldsymbol{y}}
\newcommand{\bx}{\boldsymbol{x}}
\newcommand{\bn}{\boldsymbol{n}}
\newcommand{\bu}{\boldsymbol{u}}
\newcommand{\bv}{\boldsymbol{v}}
\newcommand{\bw}{\boldsymbol{w}}
\newcommand{\ff}{\boldsymbol{f}}
\newcommand\bchi{\boldsymbol{\chi}}
\newcommand\bphi{\boldsymbol{\phi}}
\newcommand\beps{\boldsymbol{\varepsilon}}
\newcommand\bsigma{\boldsymbol{\sigma}}
\newcommand\btau{\boldsymbol{\tau}}
\newcommand\bzeta{\boldsymbol{\varsigma}}
\newcommand\bnabla{\boldsymbol{\nabla}}
\newcommand\bPi{\boldsymbol{\Pi}}
\newcommand\bI{\mathbf{I}}
\newcommand\bQ{\mathbf{Q}}
\newcommand\bS{\mathbf{S}}
\newcommand\bW{\mathbf{W}}
\newcommand\bZ{\mathbf{Z}}
\newcommand\bX{\mathbf{X}}
\newcommand{\cred}[1]{\textcolor{black}{#1}}
\newcommand{\cblue}[1]{\textcolor{black}{#1}}
\begin{document}
\titlerunning{Nitsche methods for NS/poroelasticity interface problems}
\authorrunning{Bansal, Barnafi, Pandey, Ruiz-Baier}
\title{A Nitsche method for Navier--Stokes/generalized poroelasticity interface problems}
\author{Aparna Bansal \and Nicol\'as A. Barnafi \and Dwijendra Narain Pandey \and Ricardo Ruiz-Baier}
\institute{
Aparna Bansal \at Department of Mathematics, Indian Institute of Technology Roorkee, Roorkee 247667, India. \\
\email{a\_bansal@ma.iitr.ac.in}.
\and Nicol\'as A. Barnafi \at Instituto de Ingeniería Matemática y Computacional \& Facultad de Ciencias Biológicas, Pontificia Universidad Católica de Chile, Av Vicuña Mackenna 4860, Santiago, Chile; and Centro de Modelamiento Matemático (CNRS IRL2807), Santiago, Chile. \\
\email{nicolas.barnafi@uc.cl}.
\and Dwijendra Narain Pandey \at Department of Mathematics, Indian Institute of Technology Roorkee, Roorkee 247667, India. \\  \email{dwijpfma@iitr.ac.in}. 
\and Ricardo Ruiz-Baier \at School of Mathematics,
Monash University, 9 Rainforest Walk, Melbourne, Victoria 3800, Australia; 
 and Universidad Adventista de Chile, Casilla 7-D, Chill\'an, Chile. \\ 
 \email{ricardo.ruizbaier@monash.edu}.} 
\date{\today}

\maketitle
\begin{abstract}
We consider a time-dependent coupled Navier--Stokes/generalized poroelastic flow problem and propose a unified and monolithic finite element discretization based on implicit time stepping. To handle the fluid-structure interface we employ a Nitsche-type formulation. The resulting discrete problem is shown to be well-posed using the theory of differential-algebraic equations (DAEs) and the Banach fixed-point theorem. We prove stability and derive a priori error estimates for the fully discrete scheme.  The stability and convergence of the method are ensured by a properly chosen penalty parameter independent of the mesh size. Numerical tests are presented to confirm the theoretical convergence rates and to illustrate the ability of the method to capture the coupled dynamics accurately.
\end{abstract}
\keywords{Coupled generalized poroelasticity/free-flow problem \and saddle-point formulations \and finite element methods \and Nitsche's method.}
\subclass{65M60 \and 65M12 \and 74F10.}

\section{Introduction}
 Modeling the interaction between incompressible free-fluid  and flow through deformable porous media—comm\-only termed fluid–poroelastic structure interaction (FPSI)—has attracted significant attention in recent years. This multiphysics phenomenon arises in diverse applications across the geo\-sci\-ences, bio\-me\-dicine, and industrial engineering, including groundwater movement and contaminant transport in deformable fractured aquifers, hydraulic fracturing, blood flow in arteries, interfacial transport within the eye or brain, the design of artificial organs, and the performance of industrial filtration devices.

In the free-fluid region, motion is typically governed by the Navier–Stokes equations, while the poroelastic medium is modeled by the generalized poroelasticity system, which may be derived from linearized poro-hyperelasticity. These two flow regimes are coupled through physical interface conditions: continuity of normal velocity, balance of normal stresses, balance of contact forces, and, for tangential flow components, the Beavers–Joseph–Saffman slip-with-friction condition. This formulation generalizes classical coupled free-fluid and porous-medium models such as the Stokes–Darcy system~\cite{MR2655899,MR1936102,MR2391227,MR2813344} and extends concepts from fluid–str\-uct\-ure interaction (FSI) frameworks~\cite{MR2546594,MR2536639} to account for fluid flow within a deformable porous matrix.

The mathematical and numerical analysis of FPSI problems has been addressed in a growing body of literature, particularly for the Stokes–Biot coupling, where the Stokes system models the free-fluid and the Biot equations describe the poroelastic medium. Early and recent contributions have established the well-posedness, stability, and error estimates for various discretization strategies in coupled Stokes–Biot and related fluid–poroelastic interaction problems. These include mixed finite element formulations for nonlinear and non-Newtonian models~\cite{MR4022710}, Lagrange multiplier techniques~\cite{MR3851065}, fully coupled Biot–Navier–Stokes schemes~\cite{MR2573342}, and Nitsche-based partitioning approaches~\cite{MR3347244}, as well as operator-splitting schemes for multilayered poroelastic structures~\cite{MR3343599,MR4487559,MR4089796}. Stabilized and hybridizable discontinuous Galerkin methods have been developed for improved numerical robustness~\cite{MR4598417}. Extensions include the treatment of nonlinear geometric effects in poroviscoelastic structures~\cite{MR4768306}, total-pressure approaches for interfacial flows in ocular fluid dynamics~\cite{MR4353225}, porohyperelastic coupling~\cite{MR4301411}, and non-matching interface meshes~\cite{MR2149168:}.

Beyond the typical Stokes–Biot model, more general FPSI frameworks have incorporated nonlinear fluid models, non-Newtonian rheologies, anisotropic or heterogeneous permeability, and nonlinear poroelastic constitutive laws, motivated by applications in vascular biomechanics, tissue engineering, and geomechanics. These studies highlight the complex interplay between free-fluid dynamics and the poroelastic response, and underscore the need for robust, physically consistent, and computationally efficient methods capable of handling the wide parameter regimes encountered in practice.

To enforce interface constraints in a finite element FE discretization, two main schemes have been developed. The Lagrange multiplier formulation is conceptually straightforward: it enlarges the discrete system by introducing additional multiplier fields and requires the satisfaction of inf--sup conditions to guarantee stability. In contrast, the Nitsche approach enforces interface constraints weakly through a penalty or stabilization parameter, without introducing extra unknowns. The Nitsche method, first proposed by J.~A.~Nitsche~\cite{MR341903}, is a consistent boundary-penalty technique for weakly imposing Dirichlet conditions in the variational formulation. As a result, the algebraic system remains symmetric (or skew-symmetric) and positive definite whenever the underlying operators are elliptic, yielding better-conditioned linear systems and more efficient solvers. Moreover, the Nitsche formulation facilitates the treatment of non-matching or locally refined meshes at the interface, since no mortar spaces or multiplier interpolations are required. The stability of the scheme requires only a mesh-independent penalty parameter $\gamma > 0$, thereby avoiding the need for introducing additional variables requiring further inf-sup stability conditions, as is necessary in Lagrange multiplier methods. Finally, because Nitsche's method integrates naturally into standard finite element assembly routines, it greatly simplifies implementation in existing software libraries. These features make the Nitsche scheme an attractive, robust, and computationally efficient alternative for coupled fluid--poroelastic simulations. Several studies have applied the Nitsche scheme to FSI~\cite{MR2498525,MR3160359} and Stokes--Darcy problems~\cite{MR2899252,MR4114096,MR4262392}. Moreover, a Nitsche-based framework for the coupled Stokes--Biot problem was developed in~\cite{MR3347244,MR4524684}. In this work, we employ the Nitsche scheme to enforce continuity of the normal velocity across the interface.

We present mathematical and numerical analyses of the fully dynamic Navier--Stokes–generalized poroelastic system, adopting the Brinkman model for fluid flow. This formulation ensures mass conservation within the porous domain and accounts for viscous effects in a manner consistent with thermodynamic principles~\cite{MR4253885}. The generalized poroelastic model—referred to as the linearized porohyperelastic model—was initially studied  in~\cite{MR3918635}, offering a more accurate alternative to the classical Biot model, especially in thermodynamically consistent settings involving variable porosity. More recently, a Stokes–generalized poroelasticity model was introduced in~\cite{bansal2026lagrange}, where continuous and discrete formulations were analyzed using a Lagrange multiplier approach. 

To the best of the authors' knowledge, this work presents the first work addressing the monolithic discretization of the Navier--Stokes/generalized poroelasticity interface problems using Nitsche's method. 
Significant contributions of this work are summarized as follows:
\begin{itemize}
   \item  \cred{The present work extends the analysis in~\cite{MR3918635,bansal2026lagrange} by incorporating a nonlinear convective term in the fluid region and employing the Nitsche method to weakly enforce the continuity of the normal velocity across the interface.}
    \item  \cred{The well-posedness of the semi-discrete problem is established using the theory of differential-algebraic equations (DAEs) and the Banach fixed-point theorem, under certain conditions on the penalty parameter~$\gamma$.}
    \item  \cred{We establish the well-posedness and perform a comprehensive convergence analysis of the fully discrete scheme, thereby providing a complete theoretical foundation for the coupled nonlinear problem.}
    \item  \cred{Furthermore, we conduct numerical experiments for various examples to validate the theoretical results. Additionally, we provide numerical evidence demonstrating the applicability of the proposed method to two-dimensional channel flow with rigid obstacles between porous layers, as well as to a three-dimensional simulation of blood flow through a microfluidic chip with cylindrical poroelastic obstacles.}
\end{itemize}

The remainder of the paper is organized as follows. Section~\ref{section2} introduces the notation, preliminaries, and mathematical model. Section~\ref{section3} discusses the weak formulation of the continuous model. Section~\ref{section4} presents the Nitsche formulation for the time-dependent model, establishes its well-posedness using the theory of DAEs and the Banach fixed-point theorem, and provides a stability analysis. Section~\ref{section5} analyzes the fully discrete scheme, including well-posedness, stability analysis, and error estimates, and explicitly derives the dependence of these estimates on the stabilization parameter~$\gamma$. In Section~\ref{section6}, we provide numerical experiments to test the theoretical results regarding spatio-temporal convergence. We also simulate the 2D flow in a channel with rigid obstacles between porous layers, and address a 3D simulation of blood flow through a microfluidic chip with cylindrical poroelastic obstacles. We conclude in Section~\ref{sec:concl} with a summary of our results and outline possible extensions. 

\section{Multiphysics model problem} \label{section2}
\subsection{Notation and preliminaries}
Throughout this manuscript, we utilize the classical Sobolev spaces $L^2(\Omega)$ and $H^1(\Omega)$, equipped with their respective norms $\|\bullet\|_{L^2(\Omega)}$ and $\|\bullet\|_{H^1(\Omega)}$. The $L^2$-inner product is denoted by $(\bullet,\bullet)$, and, for any arbitrary Hilbert space $H$, the duality pairing with its dual space $H'$ is represented by $\langle\bullet, \bullet\rangle_{H', H}$.  We follow the convention of denoting scalars, vectors, and tensors by $a$, $\boldsymbol{a}$, and $\mathbb{A}$, respectively. We further define the Bochner spaces $L^p(0, T ; X)$ and $L^{\infty}(0, T ; X)$ for any Banach space $X$, with norms given by $(\int_0^T\|x(s)\|_X^q \, \mathrm{d}s)^{1/q}$ and $ \operatorname{ess} \sup_{s \in (0, T)} \|x(s)\|_X$, respectively. Weak time derivatives are considered in $W^{k, p}(0, T ; X)$, defined as $\{x \in L^p(0, T ; X): D^{\alpha}x \in L^p(0, T ; X) \text{ for all } n \in \mathbb{N}, \alpha \leq k\}$, where $1 \leq p \leq \infty$. For simplicity, $C$ denotes a generic positive constant independent of the mesh size $h$ but possibly dependent on model parameters. We also use $\epsilon$ for arbitrary constants (with different values in different contexts) arising from Young's inequality. Inequalities with constants independent of $h$ are denoted by $\lesssim$ or $\gtrsim$, omitting the constants. Homogeneous boundary conditions are assumed for the analysis, as suitable lifting operators are known~\cite{MR3974685}; non-homogeneous conditions are treated in Section~\ref{section6}.
\subsection{Governing equations}
Let us consider a bounded Lipschitz domain $\Omega \subset \mathbb{R}^d$, with $d \in \{2, 3\}$, together with a partition into non-overlapping, connected subdomains $\Omega_{\mathrm{S}}$ and $\Omega_{\mathrm{P}}$, representing regions occupied by a free fluid governed by the Navier--Stokes equations and a poroelastic material governed by a general, thermodynamically consistent, linearized poro-hyperelastic system, respectively. The interface between the two subdomains is denoted by $\Sigma = \partial \Omega_{\mathrm{S}} \cap \partial \Omega_{\mathrm{P}}$. The boundary of the domain $\Omega$ is separated according to the boundaries of the two individual subdomains, that is, $\partial \Omega = \Gamma_{\mathrm{S}} \cup \Gamma_{\mathrm{P}}$.
The free fluid region $\Omega_{\mathrm{S}}$ is governed by the Navier--Stokes equations, with   fluid velocity $\bu_f^{\mathrm{S}}$ and   fluid pressure $p^{\mathrm{S}}$ as main unknowns:
\begin{subequations}
\begin{align}
\rho_f \partial_t \bu_f^{\mathrm{S}} -\bnabla \cdot \bsigma_f^{\mathrm{S}}(\bu_f^{\mathrm{S}}, p^{\mathrm{S}}) + \bu_f^{\mathrm{S}} \cdot \nabla \bu_f^{\mathrm{S}} = \ff_{\mathrm{S}} \quad \quad \quad \quad & \text{in } \Omega_{\mathrm{S}} \times(0, T], \label{stokes1}\\
\nabla \cdot \bu_f^{\mathrm{S}} = 0 \quad \quad \quad \quad & \text{in } \Omega_{\mathrm{S}} \times(0, T],  \label{stokes2}
\end{align}
\end{subequations}
where  $\beps(\bu_f^{\mathrm{S}}) = \frac12 (\bnabla \bu_f^{\mathrm{S}}+ (\bnabla \bu_f^{\mathrm{S}})^T)$ denotes the strain rate tensor; $\bsigma_f^{\mathrm{S}}(\bu_f^{\mathrm{S}}, p^{\mathrm{S}}) = 2 \mu_f \beps(\bu_f^{\mathrm{S}}) - p^{\mathrm{S}} \mathbf{I}$, stress tensor; $\ff_{\mathrm{S}}: (0,T] \rightarrow \mathbf{L}^2(\Omega_{\mathrm{S}})$, external load; $\mu_f$, fluid viscosity, and $\rho_f$, fluid density.

The poroelastic region $\Omega_{\mathrm{P}}$ is governed by the linearized poro-hyperelastic model, which includes viscoelastic properties. The primary variables are the relative fluid velocity $\bu_r^{\mathrm{P}}$, interstitial pressure $p^{\mathrm{P}}$, solid displacement $\by_s^{\mathrm{P}}$, and solid velocity $\bu_{s}^{\mathrm{P}}$. Furthermore, we adopt the notation $\bsigma_f^{\mathrm{S}}$, $\bsigma_f^{\mathrm{P}}$, and $\bsigma_s^{\mathrm{P}}$ to denote $\bsigma_f^{\mathrm{S}}(\bu_{f}^{\mathrm{S}}, p^{\mathrm{S}})$, $\bsigma_f^{\mathrm{P}}(\bu_r^{\mathrm{P}} + \bu_s^{\mathrm{P}}, p^{\mathrm{P}})$, and $\bsigma_s^{\mathrm{P}}(\by_s^{\mathrm{P}}, p^{\mathrm{P}})$, respectively. The resulting model is then defined as
\begin{subequations}
\begin{align}
    \rho_f \phi (\partial_{t} \bu_r^{\mathrm{P}} + \partial_t \bu_{s}^{\mathrm{P}}) - \bnabla \cdot \bsigma_f^{\mathrm{P}} - p^{\mathrm{P}} \nabla \phi + \phi^2 \kappa^{-1} \bu_r^{\mathrm{P}} - \theta( \bu_{s}^{\mathrm{P}} + \bu_r^{\mathrm{P}}) &= 2 \rho_f \phi \ff_{\mathrm{P}} 
    , \label{relativeporo2} \\
    (1-\phi)^2 {K}^{-1} \partial_t {p^{\mathrm{P}}} + \partial_t(\nabla \cdot \by_s^{\mathrm{P}}) + \nabla \cdot(\phi \bu_r^{\mathrm{P}}) &= {\rho}_f^{-1} \theta  
    , \label{relativeporo3} \\
    \rho_f \phi \partial_t \bu_r^{\mathrm{P}} + \rho_p \partial_t \bu_{s}^{\mathrm{P}} - \bnabla \cdot\bsigma_f^{\mathrm{P}} - \bnabla \cdot \bsigma_s^{\mathrm{P}} - \theta \bu_r^{\mathrm{P}} - \theta \bu_{s}^{\mathrm{P}} &= \rho_p \ff_{\mathrm{P}} + \rho_f \phi \ff_{\mathrm{P}} 
    , \label{relativeporo1} \\
    \rho_p \bu_{s}^{\mathrm{P}} &= \rho_p \partial_t \by_s^{\mathrm{P}}, 
    \label{relativeporo4}
\end{align}
\end{subequations}
in  $\Omega_{\mathrm{P}} \times(0, T]$, 
where $\rho_p = \rho_s(1-\phi) + \rho_f \phi$ denotes the density of the saturated porous medium. Equation~\eqref{relativeporo2} expresses the conservation of momentum for the fluid phase (a generalized Stokes law with the Brinkman effect); equation~\eqref{relativeporo3} represents mass conservation; equation~\eqref{relativeporo1} is the conservation of total momentum; and the last equation relates the solid displacement and velocity. We note that the fourth equation is multiplied by $\rho_p$ to maintain the symmetry of the block operator problem. The relevant parameters are given by: $\phi=\phi(\mathbf{x})$, porosity; $\rho_{f}, \rho_{s}$, fluid and solid densities, respectively; $\mu_f$, fluid viscosity; $\kappa$, permeability tensor; $\ff_{\mathrm{P}}: (0,T] \rightarrow  \mathbf{L}^{2}(\Omega_{\mathrm{P}})$, external load; $\theta : (0,T] \rightarrow L^2(\Omega_{\mathrm{P}})$, fluid source/sink; $K$, bulk modulus; and $\lambda_p, \mu_p$, Lam\'{e} parameters. The parameters $\rho_s, \rho_f, \mu_f, \lambda_p, \mu_p$ are assumed to be positive constants.

Let us now define two stress tensors in the poroelastic sub-domain as
\begin{subequations}
\begin{align}
\bsigma_f^{\mathrm{P}}(\bu_r^{\mathrm{P}} + \bu_s^{\mathrm{P}}, p^{\mathrm{P}}) &:= 2 \mu_f \phi \beps(\bu_r^{\mathrm{P}}) + 2 \mu_f \phi \beps( \partial_t \by_s^{\mathrm{P}}) - \phi p^{\mathrm{P}} \mathbf{I}, \label{stress1} \\
\bsigma_s^{\mathrm{P}} (\by_s^{\mathrm{P}}, p^{\mathrm{P}})&:=  2 \mu_p \beps(\by_s^{\mathrm{P}}) + \lambda_p \nabla \cdot \by_s^{\mathrm{P}} \mathbf{I} - (1-\phi) p^{\mathrm{P}} \mathbf{I}. \label{stress3}
\end{align}
\end{subequations}

This system is complemented by the following set of boundary conditions, where we set  $\Gamma_{\mathrm{P}} =\Gamma^\mathrm{D}_{\mathrm{P}} \cup \Gamma^\mathrm{N}_{\mathrm{P}}$ 
\begin{subequations}
\begin{align}
    \bu_f^{\mathrm{S}} &= \cero \quad \text{on} \quad \Gamma_{\mathrm{S}} \times (0, T], \quad  \by_s^{\mathrm{P}}  = \cero \quad \text{on} \quad \Gamma_{\mathrm{P}} \times (0, T], \label{boundary_data_1} \\
    \bu_r^{\mathrm{P}} & = \cero \quad \text{on} \quad \Gamma_{\mathrm{P}}^{\mathrm{D}} \times (0, T], \quad \bsigma_f^{\mathrm{P}} \bn_{\mathrm{P}} = \cero \quad \text{on} \quad \Gamma_{\mathrm{P}}^{\mathrm{N}} \times (0, T].\label{boundary_data_2}
\end{align}
\end{subequations}
To avoid restricting the mean value of the pressure, we assume that $|\Gamma^\mathrm{N}_{\mathrm{P}}| > 0$. We also assume that $\Gamma^\mathrm{N}_{\mathrm{P}}$ is not adjacent to the interface $\Sigma$, i.e., $\mathrm{dist}(\Gamma^\mathrm{N}_{\mathrm{P}}, \Sigma) \geq s > 0$. The interface conditions on the fluid–poroelastic interface $\Sigma$ consist of mass conservation~\eqref{int1}, balance of normal stresses~\eqref{int2}, and balance of contact forces~\eqref{int3}. Conditions~\eqref{int4}--\eqref{int5} together represent the Beavers--Joseph--Saffman (BJS) slip condition modeling tangential friction, with~\eqref{int4} involving both fluid and solid velocities, and~\eqref{int5} involving only the poroelastic fluid velocity on~$\Sigma$:
\begin{subequations}
\begin{align}
\bu_f^{\mathrm{S}} \cdot \bn_{\mathrm{S}} + (\partial_t \by_s^{\mathrm{P}} + \bu_r^{\mathrm{P}}) \cdot \bn_{\mathrm{P}} = 0 &\qquad \text{on} \quad \Sigma \times (0, T], \label{int1} \\
-(\bsigma_f^{\mathrm{S}} \bn_{\mathrm{S}}) \cdot \bn_{\mathrm{S}} = -(\bsigma_f^{\mathrm{P}} \bn_{\mathrm{P}}) \cdot \bn_{\mathrm{P}} &\qquad \text{on} \quad \Sigma \times (0, T], \label{int2} \\
\bsigma_f^{\mathrm{S}} \bn_{\mathrm{S}} + \bsigma_f^{\mathrm{P}} \bn_{\mathrm{P}} + \bsigma_s^{\mathrm{P}} \bn_{\mathrm{P}} = \cero &\qquad \text{on} \quad \Sigma \times (0, T], \label{int3} \\
-(\bsigma_f^{\mathrm{S}} \bn_{\mathrm{S}}) \cdot \btau_{f, j} = \mu_{f} \alpha_{\mathrm{BJS}} \sqrt{Z_j^{-1}} (\bu_f^{\mathrm{S}} - {\partial_t \by_s^{\mathrm{P}}}) \cdot \btau_{f, j} &\qquad \text{on} \quad \Sigma \times (0, T], \label{int4} \\
-(\bsigma_f^{\mathrm{P}} \bn_{\mathrm{P}}) \cdot \btau_{f, j} = \mu_{f} \alpha_{\mathrm{BJS}} \sqrt{Z_j^{-1}} \bu_r^{\mathrm{P}} \cdot \btau_{f, j} &\qquad \text{on} \quad \Sigma \times (0, T], \label{int5}
\end{align}
\end{subequations}
where $\bn_{\mathrm{S}}$ and $\bn_{\mathrm{P}}$ are the outward unit normal vectors to $\Omega_{\mathrm{S}}$ and $\Omega_{\mathrm{P}}$, respectively, $\btau_{f, j}, 1 \leq j \leq d-1$, is an orthogonal system of unit tangent vectors on $\Sigma$, we denote $Z_j = (\kappa \btau_{f, j}) \cdot \btau_{f, j}$, and $\alpha_{\mathrm{BJS}} \geq 0$ is a friction coefficient. 
We further set initial conditions in the following manner 
\begin{gather*}
\bu_f^{\mathrm{S}}(\bx, 0) = \bu_{f, 0}(\bx), \quad  
\bu_r^{\mathrm{P}}(\bx, 0) = \bu_{r, 0}(\bx), \quad  
\by_s^{\mathrm{P}}(\bx, 0) = \by_{s, 0}(\bx), \quad  
\bu_s^{\mathrm{P}}(\bx, 0) = \bu_{s, 0}(\bx), \quad 
p^{\mathrm{P}}(\bx, 0) = p^{\mathrm{P}, 0}(\bx).
\end{gather*}

\noindent\textbf{Assumptions}\label{assumptions}
\begin{enumerate}[label=(\textbf{H.\arabic*}), ref=$\mathrm{(H.\arabic*)}$,leftmargin=*,
  align=left]
    \item \label{(H1)}
    $\phi$ is such that $\phi, 1 / \phi,(1-\phi)$ and $1/(1-\phi)$ belong to $W^{s, r}(\Omega)$ with $s>d/r$, see \cite[Lemma 13]{MR4253885} and there exist   constants $\underline{\phi}$ and $\overline{\phi}$ such that $0<\underline{\phi} \leq \phi \leq \overline{\phi}< \frac{\rho_s}{\rho_s+\rho_f}<1$ a.e. in $\Omega$.
    \item \label{(H2)} $\theta $ represents a fluid sink.
\item \label{(H3)} The permeability tensor is symmetric and positive-definite, i.e., 
$$ \exists c >0 ~~
\bx^T \kappa^{-1} \bx \geq c |\bx|^2 \quad \forall \bx \in \mathbb{R}^d.
$$
\end{enumerate}
From these assumptions, we obtain ellipticity properties to be used in both the well-posedness analysis and the energy estimates. 

\section{Continuous analysis}\label{section3}
We consider the following functional spaces (endowed with the standard norms) as
\begin{gather*}
\mathbf{V}_f\coloneqq\left\{\bu_f^{\mathrm{S}}\in \mathbf{H}^1(\Omega_{\mathrm{S}}): \bu_f^{\mathrm{S}}= \cero  \text { on } \Gamma_{\mathrm{S}}\right\}, \quad \mathrm{W}_f\coloneqq L^2(\Omega_{\mathrm{S}}),  \\ 
\mathbf{V}_{r}\coloneqq\left\{\bu_r^{\mathrm{P}}  \in \mathbf{H}^1(\Omega_{\mathrm{P}} ): \bu_r^{\mathrm{P}}= \cero  \text { on } \Gamma_{\mathrm{P}}^{\mathrm{D}} \right\}, \quad  \mathrm{W}_p\coloneqq L^2(\Omega_{\mathrm{P}}),  \\ 
\mathbf{V}_{s}\coloneqq\left\{\by_s^{\mathrm{P}} \in \mathbf{H}^1(\Omega_{\mathrm{P}}): \by_s^{\mathrm{P}}= \cero  \text { on } \Gamma_{\mathrm{P}}\right\}, \quad 
\mathbf{W}_s\coloneqq\mathbf{L}^2(\Omega_{\mathrm{P}}).
\end{gather*} 
We now define, for all  $\bu_f^{\mathrm{S}}, \bv_f^{\mathrm{S}} \in \mathbf{V}_f, \bu, \bv \in \mathbf{H}^1(\Omega_{\mathrm{P}}), \by_s^{\mathrm{P}}, \bw_s^{\mathrm{P}} \in \mathbf{V}_s$, the following bilinear forms related to the Navier--Stokes, Brinkman,   and elasticity operators:
\begin{gather*}
a_f^{\mathrm{S}}(\bu_f^{\mathrm{S}}, \bv_f^{\mathrm{S}})  \coloneqq (2 \mu_f \beps (\bu_f^{\mathrm{S}}), \beps(\bv_f^{\mathrm{S}}))_{\Omega_{\mathrm{S}}},\qquad 
a_{f}^{\mathrm{P}}(\bu,\bv)  \coloneqq (2 \mu_f \phi \beps ( \bu ), \beps( \bv ))_{\Omega_{\mathrm{P}}}, \\
a_s^{\mathrm{P}}(\by_s^{\mathrm{P}}, \bw_s^{\mathrm{P}}) \coloneqq (2 \mu_p \beps(\by_s^{\mathrm{P}}), \beps(\bw_s^{\mathrm{P}}))_{\Omega_{\mathrm{P}}}+(\lambda_p \nabla \cdot \by_s^{\mathrm{P}}, \nabla \cdot \bw_s^{\mathrm{P}})_{\Omega_{\mathrm{P}}}, \qquad 
c(\bu_f^{\mathrm{S}},\bu_f^{\mathrm{S}}, \bv_f^{\mathrm{S}}) \coloneqq 
( \bu_f^{\mathrm{S}} \cdot \bnabla \bu_f^{\mathrm{S}}, \bv_f^{\mathrm{S}}).
\end{gather*}
Also, for all 
$\bv_f^{\mathrm{S}} \in \mathbf{V}_f$, 
$q^{\mathrm{S}} \in \mathrm{W}_f$,
$\bv_r^{\mathrm{P}} \in \mathbf{V}_r$, 
$q^{\mathrm{P}} \in\mathrm{W}_p$,
$\bw_s^{\mathrm{P}} \in \mathbf{V}_s^{\mathrm{P}}$, 
$\bw,\bzeta \in \bW_s$, 
we define the following bilinear forms:
\begin{gather*}
  b^{\mathrm{S}}(\bv_f^{\mathrm{S}}, q^{\mathrm{S}})
  \coloneqq -\bigl(\nabla\!\cdot \bv_f^{\mathrm{S}},\; q^{\mathrm{S}}\bigr),\qquad 
  b_f^{\mathrm{P}}(\bv_r^{\mathrm{P}}, q^{\mathrm{P}})
  \coloneqq -\bigl(\nabla\!\cdot(\phi\,\bv_r^{\mathrm{P}}),\; q^{\mathrm{P}}\bigr),\\
  b_s^{\mathrm{P}}(\bw_s^{\mathrm{P}}, q^{\mathrm{P}})
  \coloneqq -\bigl(\nabla\!\cdot \bw_s^{\mathrm{P}},\; q^{\mathrm{P}}\bigr),\qquad 
  m_{\xi}(\bw,\bzeta)
  \coloneqq (\xi\,\bw,\;\bzeta).
\end{gather*}
Integration by parts in \eqref{stokes1}, \eqref{relativeporo2} and  \eqref{relativeporo1} leads to the interface term
$$
I_{\Sigma} \coloneqq  - \langle \bsigma_f^{\mathrm{S}} \bn_{\mathrm{S}}, \bv_f^{\mathrm{S}} \rangle_\Sigma - \langle \bsigma_f^{\mathrm{P}} \bn_{\mathrm{P}}, \bw_s^{\mathrm{P}} \rangle_\Sigma  - \langle \bsigma_{s}^{\mathrm{P}} \bn_{\mathrm{P}}, \bw_s^{\mathrm{P}} \rangle_\Sigma - \langle \bsigma_f^{\mathrm{P}} \bn_{\mathrm{P}}, \bv_r^{\mathrm{P}} \rangle_\Sigma.
$$
Using the interface conditions \eqref{int2}-\eqref{int5}, we obtain 
\begin{align*}
I_{\Sigma} &=  - \int_{\Sigma} (\bsigma_f^{\mathrm{S}} (\bu_{f}^{\mathrm{S}},p^{\mathrm{S}}) \bn_{\mathrm{S}})\bn_{\mathrm{S}} (\bn_{\mathrm{S}} \cdot \bv_{f}^{\mathrm{S}} + \bn_{\mathrm{P}} \cdot \bv_{r}^{\mathrm{P}} + \bn_{\mathrm{P}} \cdot \bw_{s}^{\mathrm{P}}) \ds \\& \quad - \int_{\Sigma} (\bsigma_f^{\mathrm{P}} \bn_{\mathrm{P}})\btau_{f,j} (\btau_{f,j} \cdot \bv_r^{\mathrm{P}}) \ds   - \int_{\Sigma} (\bsigma_f^{\mathrm{S}}  \bn_{\mathrm{S}})\btau_{f,j} (\bv_f^{\mathrm{S}}-\bw_s^{\mathrm{P}}) \cdot \btau_{f, j} \ds \\
& =:  b_{\Gamma}(\bv_{f}^{\mathrm{S}}, \bv_{r}^{\mathrm{P}}, \bw_{s}^{\mathrm{P}} ; \bu^{\mathrm{S}}_f, p^{\mathrm{S}} ) + b_{\Gamma}( \bu_{f}^{\mathrm{S}}, \bu_{r}^{\mathrm{P}}, \partial_t \by_{s}^{\mathrm{P}} ; \varsigma  \bv^{\mathrm{S}}_f, -q^{\mathrm{S}})    + a_{\mathrm{BJS}}(\bu_{f}^{\mathrm{S}}, \partial_t \by_{s}^{\mathrm{P}} ; \bv_{f}^{\mathrm{S}}, \bw_{s}^{\mathrm{P}}) + b_{\mathrm{BJS}}(\bu_r^{\mathrm{P}}  ; \bv_r^{\mathrm{P}} ), 
\end{align*}
together with the definitions
\begin{align*}
a_{\mathrm{BJS}}( \bu_{f}^{\mathrm{S}}, \by_{s}^{\mathrm{P}} ; \bv_{f}^{\mathrm{S}}, \bw_{s}^{\mathrm{P}}) & \coloneqq \sum_{j=1}^{d-1}\int_{\Sigma}\mu_f \alpha_{\mathrm{BJS}} \sqrt{Z_j^{-1}}(\bu_{f}^{\mathrm{S}}-\by_{s}^{\mathrm{P}}) \cdot \btau_{f, j}(\bv_{f}^{\mathrm{S}}-\bw_{s}^{\mathrm{P}}) \cdot \btau_{f, j}\ds, \\ 
b_{\mathrm{BJS}}(\bu_r^{\mathrm{P}}  ; \bv_r^{\mathrm{P}} ) & := 
\sum_{j=1}^{d-1}\int_{\Sigma} \mu_f \alpha_{\mathrm{BJS}} \sqrt{Z_j^{-1}}(\bu_r^{\mathrm{P}} \cdot \btau_{f, j} ) (\bv_r^{\mathrm{P}} \cdot \btau_{f, j} ) \ds, \\
b_{\Gamma}(\bv_{f}^{\mathrm{S}}, \bv_{r}^{\mathrm{P}}, \bw_{s}^{\mathrm{P}} ; \!\bu_{f}^{\mathrm{S}}, p^{\mathrm{S}} ) & \coloneqq -\!  \int_{\Sigma} (2 \mu_f \beps(\bu_{f}^{\mathrm{S}}) -\! p^{\mathrm{S}} \mathbf{I})\bn_{\mathrm{S}}\bn_{\mathrm{S}} (\bn_{\mathrm{S}} \cdot \bv_{f}^{\mathrm{S}} +\bn_{\mathrm{P}} \cdot \bv_{r}^{\mathrm{P}} + \bn_{\mathrm{P}} \cdot \bw_{s}^{\mathrm{P}}) \ds.
\end{align*}
We use a shorthand notation for trial and test  functions $\vec{\bx}  =(
\bu_f^{\mathrm{S}},
{p}^{\mathrm{S}},
\bu_r^{\mathrm{P}},
p^{\mathrm{P}},
\by_s^{\mathrm{P}},
\bu_s^{\mathrm{P}})$, $\vec{\by}  =(
\bv_f^{\mathrm{S}},
{q}^{\mathrm{S}},
\bv_r^{\mathrm{P}}, 
q^{\mathrm{P}}, \\
\bw_s^{\mathrm{P}}, 
\bv_s^{\mathrm{P}})$ and denote the corresponding product space as
\[
\vec{\mathbf{X}} := \mathbf{V}_f \times \mathrm{W}_f \times \mathbf{V}_r \times \mathrm{W}_p \times \mathbf{V}_s \times \mathbf{W}_s.
\]
Furthermore, we  define the bilinear forms $\mathrm{E}, \mathrm{H} : \vec{\mathbf{X}} \times \vec{\mathbf{X}} \to \mathbb{R}$, which contain all terms with and without time derivatives, respectively: 
\begin{align*}
E(\partial_t \vec{\bx}, \vec{\by}) &\coloneqq  m_{\rho_f}( \partial_t \bu_f^{\mathrm{S}}, \bv_f^{\mathrm{S}} ) + m_{\rho_f \phi}(\partial_t \bu_{r}^{\mathrm{P}}, \bw_{s}^{\mathrm{P}})  
+ m_{\rho_p}(\partial_t \bu_{s}^{\mathrm{P}}, \bw_{s}^{\mathrm{P}}) 
+ m_{\rho_f \phi}(\partial_t \bu_{r}^{\mathrm{P}}, \bv_{r}^{\mathrm{P}}) \\
& \quad 
 + m_{\rho_f \phi}(\partial_t \bu_{s}^{\mathrm{P}}, \bv_{r}^{\mathrm{P}})  -m_{\rho_p}(\partial_t \by_{s}^{\mathrm{P}}, \bv_{s}^{\mathrm{P}}) + a_{f}^{\mathrm{P}}(\partial_t \by_{s}^{\mathrm{P}}, \bv_{r}^{\mathrm{P}}) 
+ a_{f}^{\mathrm{P}}(\partial_t \by_{s}^{\mathrm{P}}, \bw_{s}^{\mathrm{P}})  \\
& \quad 
- m_{\theta}(\partial_t \by_{s}^{\mathrm{P}}, \bw_{s}^{\mathrm{P}})  - m_{\theta}(\partial_t \by_{s}^{\mathrm{P}}, \bv_{r}^{\mathrm{P}}) + a_{\mathrm{BJS}}(0, \partial_t \by_s^{\mathrm{P}} ; \bv_f^{\mathrm{S}}, \bw_s^{\mathrm{P}}) \\ & \quad 
+ b_{\Gamma}( \cero, \cero , \partial_t \by_{s}^{\mathrm{P}} ; \varsigma \bv^{\mathrm{S}}_{f}, -q^{\mathrm{S}}) + ((1-\phi)^2 K^{-1} \partial_t p^{\mathrm{P}}, q^{\mathrm{P}})_{\Omega_{\mathrm{P}}}   -  b_s^{\mathrm{P}}(\partial_t \by_{s}^{\mathrm{P}}, q^{\mathrm{P}}), 
\\  H(\vec{\bx}, \vec{\by}) &\coloneqq  m_{\rho_p}(\bu_{s}^{\mathrm{P}}, \bv_{s}^{\mathrm{P}})
+ a_f^{\mathrm{S}}(\bu^{\mathrm{S}}_{f}, \bv_{f}^{\mathrm{S}}) 
+ a_{f}^{\mathrm{P}}(\bu_{r}^{\mathrm{P}}, \bw_{s}^{\mathrm{P}}) 
+ a_s^{\mathrm{P}}(\by_{s}^{\mathrm{P}}, \bw_{s}^{\mathrm{P}}) 
+ a_{f}^{\mathrm{P}}(\bu_{r}^{\mathrm{P}}, \bv_{r}^{\mathrm{P}}) \\ & 
\quad  + b^{\mathrm{S}}(\bv_{f}^{\mathrm{S}}, p^{\mathrm{S}}) 
+ b_s^{\mathrm{P}}(\bw_{s}^{\mathrm{P}}, p^{\mathrm{P}})  
+ b_f^{\mathrm{P}}(\bv_{r}^{\mathrm{P}}, p^{\mathrm{P}}) 
- m_{\theta}(\bu_{r}^{\mathrm{P}}, \bw_{s}^{\mathrm{P}}) - m_{\theta}(\bu_{r}^{\mathrm{P}}, \bv_{r}^{\mathrm{P}}) \\& \quad 
+ m_{\phi^2/\kappa}(\bu_{r}^{\mathrm{P}}, \bv_{r}^{\mathrm{P}})  + b_{\Gamma}(\bv_{f}^{\mathrm{S}}, \bv_{r}^{\mathrm{P}}, \bw_{s}^{\mathrm{P}} ; \bu^{\mathrm{S}}_{f}, p^{\mathrm{S}} ) + a_{\mathrm{BJS}}(\bu_f^{\mathrm{S}}, 0 ; \bv_f^{\mathrm{S}}, \bw_s^{\mathrm{P}} ) \\&\quad + b_{\mathrm{BJS}}(\bu_r^{\mathrm{P}}  ;  \bv_r^{\mathrm{P}}) + b_{\Gamma}( \bu_{f}^{\mathrm{S}}, \bu_{r}^{\mathrm{P}}, \cero; \varsigma \bv^{\mathrm{S}}_{f}, -q^{\mathrm{S}})   - b_f^{\mathrm{P}}(\bu_{r}^{\mathrm{P}}, q^{\mathrm{P}})   - b^{\mathrm{S}}(\bu^{\mathrm{S}}_{f}, q^{\mathrm{S}}), \\  L(\vec{\bx}, \vec{\bx}, \vec{\by}) &\coloneqq 
\boldsymbol{c}(\bu_f^{\mathrm{S}},\bu_f^{\mathrm{S}}, \bv_f^{\mathrm{S}}), 
    \end{align*}
    whereas the right-hand side terms are denoted by the form $F$, given by:
    \begin{equation}\label{eq:linear-funct} F(\vec{\by}) \coloneqq (\ff_{\mathrm{S}}, \bv_{f}^{\mathrm{S}} )_{\Omega_{\mathrm{S}}} 
+ (\rho_{f} \phi \ff_{\mathrm{P}}, \bv_{r}^{\mathrm{P}})   + (\rho_p \ff_{\mathrm{P}}, \bw_{s}^{\mathrm{P}}) + (r_{\mathrm{S}}, q^{\mathrm{S}}) + (\rho_{f}^{-1} \theta, q^{\mathrm{P}}). \end{equation}

First, we multiply \eqref{stokes1}–\eqref{stokes2} and \eqref{relativeporo2}–\eqref{relativeporo4} by their respective test functions, apply integration by parts, and impose the boundary conditions \eqref{boundary_data_1}–\eqref{boundary_data_2}. The balance of normal stress, the $\mathrm{BJS}$ conditions, and the conservation of momentum \eqref{int2}–\eqref{int5} are then naturally incorporated into the derivation of the weak formulation, while the conservation of mass \eqref{int1} is enforced strongly. Hence, the continuous weak formulation reads: for $t \in (0, T]$, find $\vec{\bx}(t) \in \vec{\bX}$, with
$  \bu_f^{\mathrm{S}} \cdot \bn_{\mathrm{S}} 
  + \bigl(\partial_t \by_s^{\mathrm{P}} + \bu_r^{\mathrm{P}} \bigr) \cdot \bn_{\mathrm{P}} 
  = 0,$ and subject to the given initial conditions,
such that
\begin{equation}\label{n_weak}
    E(\partial_t \vec{\bx}, \vec{\by}) + H(\vec{\bx}, \vec{\by}) + L(\vec{\bx}, \vec{\bx}, \vec{\by})= F(\vec{\by}),
\end{equation}
for all $\vec{\by} \in \vec{\bX}$. We further define
\begin{align*}
\left|\bu_{f}^{\mathrm{S}}-\by_{s}^{\mathrm{P}}\right|_{\mathrm{BJS}}^2  & \coloneqq a_{\mathrm{BJS}}(\bu_{f}^{\mathrm{S}}, \by_{s}^{\mathrm{P}} ; \bu_{f}^{\mathrm{S}}, \by_{s,h}^{\mathrm{P}}) 
 =\sum_{j=1}^{d-1} \mu_f \alpha_{\mathrm{BJS}}\|Z_j^{-1 / 4}(\bu_{f}^{\mathrm{S}}-\by_{s}^{\mathrm{P}}) \cdot \btau_{f, j}\|_{0,\Sigma}^2, \\  \left|\bu_r^{\mathrm{P}}\right|_{\mathrm{BJS}}^2 & := b_{\mathrm{BJS}}(\bu_r^{\mathrm{P}}  ; \bv_r^{\mathrm{P}}) = \sum_{j=1}^{d-1} \mu_f \alpha_{\mathrm{BJS}} \|Z_j^{-1 / 4} \bu_r^{\mathrm{P}}\cdot \btau_{f, j}\|_{0,\Sigma}^2.
\end{align*}

Given that the primary objective of this study is to analyze the discrete formulation using Nitsche's method, we do not present the continuous analysis, which can be found in the recent work \cite{bansal2026lagrange}. 

\section{Discrete weak formulation with Nitsche}\label{section4}
Suppose that $\mathcal{T}_h^{\mathrm{S}}$ and $\mathcal{T}_h^{\mathrm{P}}$ are shape‑regular, quasi uniform partitions of $\Omega_{\mathrm{S}}$ and $\Omega_{\mathrm{P}}$, respectively, each consisting of affine elements of maximal diameter~$h$. The two partitions may be non‑matching at the interface~$\Sigma$, where the $(d-1)$‑dimensional diameter of a face on~$\Sigma$ is denoted by~$h_E$ and $\mathcal{E}_{\Sigma}$ represents the faces lying on the boundary $\Sigma $. 
To discretize the unknowns in the Navier--Stokes and generalized poroelasticity problems, we define $ X_h^k = \{\,q \in C(\Omega)\colon q|_K \in \mathbb{P}_k(K)\ \forall\,K\in\mathcal{T}_h\},$
where $\mathbb{P}_k(K)$ is the space of polynomials of degree $k\ge1$ on each $K$.  With these definitions we then set up the following  discrete spaces:
\begin{gather*}
 \mathbf{V}_{f, h}=\mathbf{V}_f \cap\left[X_h^{k+1}\right]^d, \quad \mathrm{W}_{f,h}=\mathrm{W}_f \cap X_h^k, \quad \mathbf{V}_{r, h}=\mathbf{V}_r \cap\left[X_h^{k+1}\right]^d, \\ 
\mathbf{V}_{s,h}=\mathbf{V}_s \cap\left[X_h^{k+1}\right]^d, \quad\mathrm{W}_{p,h}=\mathrm{W}_p \cap X_h^k, \quad \mathbf{W}_{s, h}=\mathbf{W}_s \cap\left[X_h^k\right]^d. 
\end{gather*}
    \cred{We recall that $\mathbf{V}_{s,h}$ corresponds to the finite element space of $\by_{s,h}^{\mathrm{P}}$, while the space for $\bv_{s,h}^{\mathrm{P}}$ is $\mathbf{W}_{s,h}$.}
The global velocity and pressure spaces are defined as 
\begin{align*}
    \mathbf{V}_h&:=\left\{\vec{\bv}_h=(\bv_{f, h}^{\mathrm{S}}, \bv_{r, h}^{\mathrm{P}}, \bw_{s,h}^{\mathrm{P}} ) \in \mathbf{V}_{f,h} \times \mathbf{V}_{r,h} \times \mathbf{V}_{s,h} 
    \right\}, \quad 
   \cblue{\mathrm{W}_h:=\left\{\vec{q}_h=(q_{h}^{\mathrm{S}},  q_{h}^{\mathrm{P}} ) \in\mathrm{W}_{f, h} \times\mathrm{W}_{p, h}\right\},}
\end{align*}
equipped with the \cblue{norms}:
\begin{align*}
   \cred{ \| \cdot \|_{\mathbf{V}_h} } &= \cred{ \| \cdot \|_{1,\Omega_{\mathrm{S}}} + \| \cdot \|_{1,\Omega_{\mathrm{P}}} + \| \cdot \|_{1,\Omega_{\mathrm{P}}},} \qquad 
  \cred{  \| \cdot \|_{\mathrm{W}_h} }  =  \cred{  \| \cdot \|_{0,\Omega_{\mathrm{S}}} + \| \cdot \|_{0,\Omega_{\mathrm{P}}}.}
\end{align*}
The semi‐discrete weak formulation reads: find \(\vec{\bx}_h(t)\in \bX_h\) such that
\begin{equation}\label{semi_weak_for}
  \bar{E}\bigl(\partial_t \vec{\bx}_h,\vec{\by}_h\bigr)
  +\bar{H}\bigl(\vec{\bx}_h,\vec{\by}_h\bigr) + L(\vec{\bx}_h, \vec{\bx}_h, \vec{\by}_h)
  =F(\vec{\by}_h),
  \quad\forall\,\vec{\by}_h\in\bX_h,
\end{equation}
where
\begin{subequations}
\begin{align}\label{semi_weak}
  \bar{E}\bigl(\partial_t \vec{\bx}_h,\vec{\by}_h\bigr)
  &:=E\bigl(\partial_t \vec{\bx}_h,\vec{\by}_h\bigr)
    +c_{\Gamma}\bigl(\mathbf{0},\mathbf{0},\partial_t \by_{s,h}^{\mathrm{P}}
    ;\,\bv_{f,h}^{\mathrm{S}},\bv_{r,h}^{\mathrm{P}},\bw_{s,h}^{\mathrm{P}}\bigr),\\
  \bar{H}\bigl(\vec{\bx}_h,\vec{\by}_h\bigr)
  &:=H\bigl(\vec{\bx}_h,\vec{\by}_h\bigr)
    +c_{\Gamma}\bigl(\bu_{f,h}^{\mathrm{S}},\bu_{r,h}^{\mathrm{P}},\mathbf{0}
    ;\,\bv_{f,h}^{\mathrm{S}},\bv_{r,h}^{\mathrm{P}},\bw_{s,h}^{\mathrm{P}}\bigr).
\end{align}\end{subequations}
The conservation of mass is enforced weakly using the Nitsche parameter \(\gamma\), i.e.,
\begin{align*}
&c_{\Gamma}\bigl(\bu_{f,h}^{\mathrm{S}},\bu_{r,h}^{\mathrm{P}},\by_{s,h}^{\mathrm{P}}   ;\,\bv_{f,h}^{\mathrm{S}},\bv_{r,h}^{\mathrm{P}},\bw_{s,h}^{\mathrm{P}}\bigr) \\
&\quad 
  :=\int_{\Sigma}
    \frac{\gamma}{h_E}
    \bigl(\bn_{\mathrm{S}}\cdot \bu_{f,h}^{\mathrm{S}}
    +\bn_{\mathrm{P}}\cdot \bu_{r,h}^{\mathrm{P}}
    +\bn_{\mathrm{P}}\cdot\partial_t \by_{s,h}^{\mathrm{P}}\bigr)
    \bigl(\bn_{\mathrm{S}}\cdot \bv_{f,h}^{\mathrm{S}}
    +\bn_{\mathrm{P}}\cdot \bv_{r,h}^{\mathrm{P}}
    +\bn_{\mathrm{P}}\cdot \bw_{s,h}^{\mathrm{P}}\bigr)
  \,\mathrm{d}s,
\end{align*}
and the initial conditions
$  \bu_{f,h}^{\mathrm{S}}(0),\;
  \bu_{r,h}^{\mathrm{P}}(0),\;
  \by_{s,h}^{\mathrm{P}}(0),\;
  p_h^{\mathrm{P}}(0),\;
  \bu_{s,h}^{\mathrm{P}}(0)$
are suitable approximations of $\bu_{f,0}$, $\bu_{r,0}$, $\by_{s,0}$,  $p^{\mathrm{P},0}$, and $\bu_{s,0}$, respectively.

\subsection{Existence and uniqueness of semi-discrete solution}
The existence and uniqueness of the solution is proved in two steps:
\begin{enumerate}
\item We introduce the set
   $ \mathbf{K}_h \coloneqq \left\{ \bv^{\mathrm{S}}_{f,h} \in \mathbf{V}_{f,h} : \|\bnabla \bv_{f,h}^{\mathrm{S}}\|_{0, \Omega_{\mathrm{S}}} \leq \frac{1}{ C_{\mathrm{I}} S_f^2 K_f^3}\|\boldsymbol{f}_{\mathrm{S}}\|_{0, \Omega_{\mathrm{S}}} \right\},$  
   where $S_f, K_f > 0$ denote the Sobolev and Korn constants, respectively. 
 \cred{  The constant $C_{\mathrm{I}} =C(C_S,\xi_2) >0$ is the inf–sup stability constant of the linearized coupled problem, 
which depends on $C_S$ and $\xi_2$ (defined later on).} We then define the discrete fixed-point operator as
    \begin{align}\label{fixed_map}
           \mathcal{J}_h: \mathbf{K}_h \rightarrow \mathbf{K}_h, \quad \bw^{\mathrm{S}}_{f,h} \mapsto \mathcal{J}_h(\bw^{\mathrm{S}}_{f,h}) = \bu^{\mathrm{S}}_{f,h},
    \end{align}
    where, for a given $\bw^{\mathrm{S}}_{f,h} \in \mathbf{K}_h$, the function $\bu^{\mathrm{S}}_{f,h}$ denotes the first component of the solution to the linearised version of problem~\eqref{semi_weak_for}, given by fixing $\bw^{\mathrm{S}}_{f,h} \in \mathbf{V}_{f,h}$ in the nonlinear form, such that
    \[
    \boldsymbol{c}(\bw^{\mathrm{S}}_{f,h}, \bu^{\mathrm{S}}_{f,h}, \bv^{\mathrm{S}}_{f,h}) = (\bw^{\mathrm{S}}_{f,h} \cdot \bnabla \bu^{\mathrm{S}}_{f,h}, \bv^{\mathrm{S}}_{f,h}),
    \]
    and then prove the well-posedness of the resulting Oseen/generalised poroelasticity problem.

    \item Next, we show that the discrete fixed-point operator $\mathcal{J}_h$ admits a fixed point.
\end{enumerate}

First, we show that the Oseen/generalised poroelasticity system is well-posed. To prove the existence of a solution, we employ the theory of differential-algebraic equations (DAEs) \cite{MR1101809}.

Let 
\(\{\boldsymbol\phi_{u_f,i}^{\mathrm{S}}\}_{i=1}^{k}\), 
\(\{\boldsymbol\phi_{u_r,i}^{\mathrm{P}}\}_{i=1}^{k}\), 
\(\{\boldsymbol\phi_{y_s,i}^{\mathrm{P}}\}_{i=1}^{k}\), 
\(\{\phi_{p_f,i}^{\mathrm{S}}\}_{i=1}^{k}\), 
\(\{\phi_{p_p,i}^{\mathrm{P}}\}_{i=1}^{k}\) 
and 
\(\{\boldsymbol\phi_{u_s,i}^{\mathrm{P}}\}_{i=1}^{k}\) 
be bases of $\mathbf{V}_{f,h}$, $\mathbf{V}_{r,h}$,  $\mathbf{V}_{s,h}$,  $\mathrm{W}_{f,h}$, $\mathrm{W}_{p,h}$, and 
$\mathbf{W}_{s,h}$, respectively. We write the linearized version of problem~\eqref{semi_weak_for} in matrix form. For this we introduce, for $1 \leq i, j \leq k$, the notation 
\begin{gather*}
\mathcal M_{\xi}   =   m_{\xi}(\boldsymbol\phi_{\star,j}^{\mathrm{P}},\boldsymbol\phi_{\star,i}^{\mathrm{P}}),  \quad 
\mathcal A_f^{\mathrm{S}}   =   a_f^{\mathrm{S}}(\boldsymbol\phi_{u_f,j}^{\mathrm{S}},\boldsymbol\phi_{u_f,i}^{\mathrm{S}}), \quad 
\mathcal A_f^{\mathrm{P}}   = a_{f}^{\mathrm{P}}(\boldsymbol\phi_{\star,j}^{\mathrm{P}},\boldsymbol\phi_{\star,i}^{\mathrm{P}}), \\
\mathcal A_s^{\mathrm{P}}   = a_s^{\mathrm{P}}(\boldsymbol\phi_{y_s,j}^{\mathrm{P}}, \boldsymbol\phi_{y_s,i}^{\mathrm{P}}), \quad 
\mathcal B^{\mathrm{S}}   =  b^{\mathrm{S}}(\boldsymbol\phi_{u_f,j}^{\mathrm{S}}, \phi_{p,i}^{\mathrm{S}}), \quad
 \mathcal A_{ss}^{\mathrm{BJS}} = a_{\mathrm{BJS}}\!\bigl(\cero,\boldsymbol\phi_{y_s,j}^{\mathrm{P}};\,\cero,\boldsymbol\phi_{y_s,i}^{\mathrm{P}}\bigr), \\
\mathcal B_s^{\mathrm{P}} =  b_s^{\mathrm{P}}(\boldsymbol\phi_{y_s,j}^{\mathrm{P}},\phi_{p,i}^{\mathrm{P}}), \quad \mathcal B_f^{\mathrm{P}} =  b_f^{\mathrm{P}}(\boldsymbol\phi_{u_r,j}^{\mathrm{P}},\phi_{p,i}^{\mathrm{P}}), \quad
\mathcal A_{fs}^{\mathrm{BJS}} =  a_{\mathrm{BJS}}\!\bigl(\boldsymbol\phi_{u_f,j}^{\mathrm{S}},\cero;\,\cero,\boldsymbol\phi_{y_s,i}^{\mathrm{P}}\bigr), \\
  \mathcal A_{ff}^{\mathrm{BJS}} = a_{\mathrm{BJS}}\!\bigl(\boldsymbol\phi_{u_f,j}^{\mathrm{S}},\cero;\,\boldsymbol\phi_{u_f,i}^{\mathrm{S}},\cero\bigr), \quad
  \mathcal A_{rr}^{\mathrm{BJS}} =  a_{\mathrm{BJS}}\!\bigl(\boldsymbol\phi_{u_r,j}^{\mathrm{P}},\cero;\,\boldsymbol\phi_{u_r,i}^{\mathrm{P}},\cero\bigr),\\
  \mathcal B_{f,\Gamma} =  b_{\Gamma}\!\bigl(\boldsymbol\phi_{u_f,j}^{\mathrm{S}},\cero,\cero;\,\boldsymbol\phi_{u_f,i}^{\mathrm{S}},\cero\bigr), \quad
  \mathcal B_{p,\Gamma} =  b_{\Gamma}\!\bigl(\cero,\boldsymbol\phi_{u_r,j}^{\mathrm{P}},\cero;\,\boldsymbol\phi_{u_f,i}^{\mathrm{S}},\cero\bigr),  \\
  \mathcal B_{s,\Gamma} = b_{\Gamma}\!\bigl(\cero,\cero,\boldsymbol\phi_{y_s,j}^{\mathrm{P}};\,\boldsymbol\phi_{u_f,i}^{\mathrm{S}},\cero\bigr),  \quad \mathcal C_{f,\Gamma} =   b_{\Gamma}\!\bigl(\boldsymbol\phi_{u_f,j}^{\mathrm{S}},\cero,\cero;\,\cero,\phi_{p,i}^{\mathrm{S}}\bigr), \\
  \mathcal C_{p,\Gamma} = b_{\Gamma}\!\bigl(\cero,\boldsymbol\phi_{u_r,j}^{\mathrm{P}},\cero;\,\cero,\phi_{p,i}^{\mathrm{S}}\bigr), \quad
  \mathcal C_{s,\Gamma} =  b_{\Gamma}\!\bigl(\cero,\cero,\boldsymbol\phi_{y_s,j}^{\mathrm{P}};\,\cero,\phi_{p,i}^{\mathrm{S}}\bigr), \\
  \mathcal N_{ff,\Gamma} = c_{\Gamma}\!\bigl(\boldsymbol\phi_{u_f,j}^{\mathrm{S}},\cero,\cero;\,\boldsymbol\phi_{u_f,i}^{\mathrm{S}},\cero,\cero\bigr), \quad
  \mathcal N_{rr,\Gamma} = c_{\Gamma}\!\bigl(\cero,\boldsymbol\phi_{u_r,j}^{\mathrm{P}},\cero;\,\cero,\boldsymbol\phi_{u_r,i}^{\mathrm{P}},\cero\bigr), \\
  \mathcal N_{ss,\Gamma} = c_{\Gamma}\!\bigl(\cero,\cero,\boldsymbol\phi_{y_s,j}^{\mathrm{P}};\,\cero,\cero,\boldsymbol\phi_{y_s,i}^{\mathrm{P}}\bigr),  \quad 
  \mathcal N_{fr,\Gamma} = c_{\Gamma}\!\bigl(\boldsymbol\phi_{u_f,j}^{\mathrm{S}},\cero,\cero;\,\cero,\boldsymbol\phi_{u_r,i}^{\mathrm{P}},\cero\bigr), \\
  \mathcal N_{fs,\Gamma} =  c_{\Gamma}\!\bigl(\boldsymbol\phi_{u_f,j}^{\mathrm{S}},\cero,\cero;\,\cero,\cero,\boldsymbol\phi_{y_s,i}^{\mathrm{P}}\bigr) , \quad
  \mathcal N_{rf,\Gamma} = c_{\Gamma}\!\bigl(\cero,\boldsymbol\phi_{u_r,j}^{\mathrm{P}},\cero;\,\boldsymbol\phi_{u_f,i}^{\mathrm{S}},\cero,\cero\bigr), \\
  \mathcal N_{rs,\Gamma} = c_{\Gamma}\!\bigl(\cero,\boldsymbol\phi_{u_r,j}^{\mathrm{P}},\cero;\,\cero,\cero,\boldsymbol\phi_{y_s,i}^{\mathrm{P}}\bigr), \quad
  \mathcal N_{sf,\Gamma} = c_{\Gamma}\!\bigl(\cero,\cero,\boldsymbol\phi_{y_s,j}^{\mathrm{P}};\,\boldsymbol\phi_{u_f,i}^{\mathrm{S}},\cero,\cero\bigr), \\
  \mathcal N_{sr,\Gamma} =  c_{\Gamma}\!\bigl(\cero,\cero,\boldsymbol\phi_{y_s,j}^{\mathrm{P}};\,\cero,\boldsymbol\phi_{u_r,i}^{\mathrm{P}},\cero\bigr),  \quad 
  \mathcal{C} =  \boldsymbol{c}(\bw^{\mathrm{S}}_{f,h},\boldsymbol\phi_{u_f,i}^{\mathrm{S}} , \boldsymbol\phi_{u_f,j}^{\mathrm{S}}). 
\end{gather*}

 For the sake of the forthcoming analysis, we use $\partial_t \by_{s}^{\mathrm{P}}$ instead of $\bu_{s}^{\mathrm{P}}$ in the poroelastic region. Therefore we can rewrite the weak formulation \eqref{semi_weak} in the DAE system form as  
\begin{equation}\label{mixed-primal}
\mathbf{M} \partial_t \vec{\bx}_h(t)+\mathbf{N} \vec{\bx}_h(t)=L(t),\end{equation}
where
\begin{gather*}
\resizebox{0.95\linewidth}{!}{$
L = \begin{pmatrix}
\mathcal{F}_{\bu_{f,h}^{\mathrm{S}}}\\
\mathcal{F}_{\bu_r}\\
\mathcal{F}_{\by_{s,h}^{\mathrm{P}}}\\
0\\
\mathcal{F}_{p_h^{\mathrm{S}}}\\
\mathcal{F}_{p_h^{\mathrm{P}}}
\end{pmatrix},\quad
\mathbf{M} = \begin{pmatrix}
\mathcal{M}_{\rho_f} & 0 & (\mathcal{A}_{fs}^{\mathrm{BJS}})^* + \mathcal{N}_{sf,\Gamma} + \mathcal{B}_{e,\Gamma}^* & 0 & 0 & 0 \\
0 & \mathcal{M}_{\rho_f\phi} & \mathcal{M}_{-\theta} + \mathcal{A}_f^{\mathrm{P}} + \mathcal{N}_{sr,\Gamma} & \mathcal{M}_{\rho_f\phi} & 0 & 0 \\
0 & \mathcal{M}_{\rho_f\phi} & A_{ss}^{\mathrm{BJS}} + \mathcal{A}_f^{\mathrm{P}} + \mathcal{M}_{-\theta} + \mathcal{N}_{ss,\Gamma} & \mathcal{M}_{\rho_p} & 0 & 0 \\
0 & 0 & \mathcal{M}_{-\rho_p} & 0 & 0 & 0 \\
0 & 0 & (\mathcal{C}_{s,\Gamma})^* & 0 & 0 & 0 \\
0 & 0 & -(\mathcal{B}_s^{\mathrm{P}})^* & 0 & 0 & \mathcal{M}_{\frac{(1-\phi)^2}{K}}
\end{pmatrix},
$} \\
\resizebox{0.95\linewidth}{!}{$
\mathbf{N} = 
\begin{pmatrix}
\mathcal{A}_f^{\mathrm{S}}
  +\mathcal{A}_{ff}^{\mathrm{BJS}}
  +\mathcal{B}_{f,\Gamma}^*
  +\mathcal{B}_{f,\Gamma}
  +\mathcal{N}_{ff,\Gamma} 
  +\mathcal{C}
  & \mathcal{N}_{rf,\Gamma}
  +\mathcal{B}_{p,\Gamma}^*
  & 0 & 0
  & \mathcal{B}^{\mathrm{S}}
  +\mathcal{C}_{f,\Gamma}
  & 0 \\[6pt]
\mathcal{B}_{p,\Gamma}
  +\mathcal{N}_{fr,\Gamma}
  & \mathcal{A}_f^{\mathrm{P}}
    +\mathcal{M}_{-\theta}
    +\mathcal{M}_{\phi^2/\kappa}
    +\mathcal{N}_{rr,\Gamma}
    +\mathcal{A}_{rr}^{\mathrm{BJS}}
  & 0 & 0
  & \mathcal{C}_{p,\Gamma}
  & \mathcal{B}_{f}^{\mathrm{P}} \\[6pt]
\mathcal{A}_{fs}^{\mathrm{BJS}}
  +\mathcal{B}_{e,\Gamma}
  +\mathcal{N}_{fs,\Gamma}
  & \mathcal{A}_f^{\mathrm{P}}
    +\mathcal{M}_{-\theta}
    +\mathcal{N}_{rs,\Gamma}
  & \mathcal{A}_s^{\mathrm{P}}
  & 0
  & \mathcal{C}_{s,\Gamma}
  & \mathcal{B}_{s}^{\mathrm{P}} \\[6pt]
0 & 0 & 0 & \mathcal{M}_{\rho_{p}} & 0 & 0 \\[6pt]
-(\mathcal{B}^{\mathrm{S}})^*
  +(\mathcal{C}_{f,\Gamma})^*
  & (\mathcal{C}_{p,\Gamma})^*
  & 0 & 0 & 0 & 0 \\[6pt]
0 & -(\mathcal{B}_{f}^{\mathrm{P}})^* & 0 & 0 & 0 & 0
\end{pmatrix}.
$}
\end{gather*}
We readily note that the  matrix $\mathbf{M}+\mathbf{N}$  
yields a generalized saddle-point structure 
of the form 
\[
\mathbf{M} + \mathbf{N}
=
\begin{pmatrix}
\mathbf{A} & \mathbf{B}^T\\
-\mathbf{B}    & \mathbf{C}
\end{pmatrix},
\]
with
\[
\mathbf{A} =
\resizebox{0.925\textwidth}{!}{$
\begin{pmatrix}
\mathcal{M}_{\rho_f}
  + \mathcal{A}_f^{\mathrm{S}}
  + \mathcal{A}_{f f}^{\mathrm{BJS}}
  + \mathcal{B}_{f,\Gamma}^*
  + \mathcal{B}_{f,\Gamma}
  + \mathcal{N}_{ff, \Gamma} + \mathcal{C}
&
\mathcal{N}_{rf, \Gamma}
  + \mathcal{B}_{p,\Gamma}^*
&
(\mathcal{A}_{fs}^{\mathrm{BJS}})^*
  + \mathcal{N}_{sf, \Gamma}
  + \mathcal{B}_{e, \Gamma}^*
& 0
\\[4pt]
\mathcal{B}_{p,\Gamma}
  + \mathcal{N}_{fr, \Gamma}
&
\!\!\!\!\!\!\!\!\!\!\!\!\!\!\!\!\!\!\!\!\!\mathcal{A}_f^{\mathrm{P}}
  + \mathcal{M}_{-\theta}
  + \mathcal{M}_{\phi^2/\kappa}
  + \mathcal{N}_{pp, \Gamma}
  + \mathcal{M}_{\rho_{f}\phi}
&
\mathcal{M}_{-\theta}
  + \mathcal{A}^{\mathrm{P}}_f
  + \mathcal{N}_{sr, \Gamma}
& 
\mathcal{M}_{\rho_{f}\phi}
\\[4pt]
\mathcal{A}_{fs}^{\mathrm{BJS}}
  + \mathcal{B}_{e, \Gamma}
  + \mathcal{N}_{fs, \Gamma}
&
\!\!\!\mathcal{A}_f^{\mathrm{P}}
  + \mathcal{M}_{- \theta}
  + \mathcal{N}_{rs, \Gamma}
  + \mathcal{M}_{\rho_{f}\phi}
&
\!\!\!\!\!\!\!\mathcal{A}_s^{\mathrm{P}}
  + A_{ss}^{\mathrm{BJS}}
  + \mathcal{A}^{\mathrm{P}}_f
  + \mathcal{M}_{-\theta}
  + \mathcal{N}_{ss, \Gamma}
& 
\mathcal{M}_{\rho_{p}}
\\[4pt]
0 & 0 
& \mathcal{M}_{-\rho_{p}}
& \mathcal{M}_{\rho_{p}}
\end{pmatrix},
$}
\]
\[
\mathbf{B}^T =
\begin{pmatrix}
\mathcal{B}^{\mathrm{S}} + \mathcal{C}_{f,\Gamma} & 0 \\
\mathcal{C}_{p,\Gamma}                            & \mathcal{B}_{f}^{\mathrm{P}} \\
\mathcal{C}_{s,\Gamma}                            & \mathcal{B}_{s}^{\mathrm{P}} \\
0                                                 & 0
\end{pmatrix},
\quad
\mathbf{C} =
\begin{pmatrix}
0 & 0\\
0 & \displaystyle \mathcal{M}_{\tfrac{(1-\phi)^2}{K}}
\end{pmatrix}.
\]

We now prove an auxiliary estimate that will be used in our subsequent analysis. For this we employ the trace, Hölder, and Young inequalities.  Let $\bu_h \in \mathbf{H}^1(\Omega), q_h \in L^2(\Omega)$. Then we have:
\begin{subequations}
\begin{align}
2 \mu_f(\boldsymbol{\beps}(\boldsymbol{u}_h) \boldsymbol{n} \cdot \boldsymbol{n}, \boldsymbol{u}_h \cdot \boldsymbol{n})_{\Sigma} & \leq 2 \mu_f\sum_{E \in \mathcal{E}_{\Sigma}}\|\boldsymbol{\beps}(\boldsymbol{u}_h) \boldsymbol{n}\|_{0, E}\|\boldsymbol{u}_h \cdot \boldsymbol{n}\|_{0, E} \nonumber \\
& \leq 2 \mu_f \sum_{E \in \mathcal{E}_{\Sigma}}\bigl(\frac{h_E \delta_1}{2}\|\boldsymbol{\beps}(\boldsymbol{u}_h)\|_{0, E}^2+\frac{h_E^{-1}}{2 \delta_1}\|\boldsymbol{u}_h \cdot \boldsymbol{n}\|_{0, E}^2\bigr) \nonumber \\&
 \leq \delta_1 C_{\mathrm{tr}}^2 \mu_f \|\boldsymbol{\beps}(\boldsymbol{u}_h)\|_{0, \Omega}^2+ \frac{\mu_f}{\delta_1} \sum_{E \in \mathcal{E}_{\Sigma}} \frac{1}{h_E}\|\boldsymbol{u}_h \cdot \boldsymbol{n}\|_{0, E}^2, \label{I1} \\
  (q_h,\boldsymbol{u}_h\cdot \boldsymbol{n})_{\Sigma}
  &\le 
    \sum_{E\in\mathcal{E}_{\Sigma}}
      \|q_h\|_{0,E}\,\|\boldsymbol{u}_h\cdot\boldsymbol{n}\|_{0,E}
\nonumber \\
& 
  \le 
    \sum_{K\in\mathcal{T}_h}
      \frac{C_{\mathrm{tr}}^2\,\delta_2}{2\,\mu_f}\,\|q_h\|_{0,K}^2
    +\frac{1}{2\,\delta_2}
      \sum_{E\in\mathcal{E}_{\Sigma}}
        \frac{\mu_f}{h_E}\,\|\boldsymbol{u}_h\cdot\boldsymbol{n}\|_{0,E}^2. \label{I2} 
\end{align}
\end{subequations}

\begin{lemma}\label{coercivity-continuity}
 Under Assumptions~\ref{(H1)}--\ref{(H3)} and $ \frac{4}{C_{\mathrm{I}} ^2} \| \boldsymbol{f}_{\mathrm{S}} \|_{0,\Omega_{\mathrm{S}}} < 1 $, the linear operator $\mathbf{A}$ is continuous and elliptic with constant $C_{\alpha}$ (defined below), for any given $ \bw_{f,h}^{\mathrm{S}} \in \mathbf{K}_h$.
 \end{lemma}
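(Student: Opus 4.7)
The plan is to test the bilinear form associated with $\mathbf{A}$ against its own trial argument, collect all explicitly sign-definite contributions, and absorb the remaining indefinite interface and convective terms using the trace-type estimate \eqref{I1} together with the smallness constraints on $\boldsymbol{f}_{\mathrm{S}}$ and on $\bw_{f,h}^{\mathrm{S}} \in \mathbf{K}_h$. Continuity is handled separately by direct block-by-block applications of Cauchy--Schwarz, the discrete trace inequality, and Sobolev embedding.

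For continuity, the volumetric forms $a_f^{\mathrm{S}}$, $a_f^{\mathrm{P}}$, $a_s^{\mathrm{P}}$, $m_\xi$ and the BJS forms are bounded by Cauchy--Schwarz and the continuous trace inequality; the Nitsche coupling $c_\Gamma$ is controlled using $\|\bv_h \cdot \bn\|_{0,\Sigma} \lesssim h^{-1/2}\|\bv_h\|_{0,\Omega}$ with a prefactor proportional to $\gamma$; the interface traction blocks $\mathcal{B}_{\cdot,\Gamma}$ are bounded through the same decomposition used in \eqref{I1}; and the convective block $\mathcal{C}$ is controlled by the Sobolev embedding $\mathbf{H}^1(\Omega_{\mathrm{S}}) \hookrightarrow \mathbf{L}^4(\Omega_{\mathrm{S}})$, giving $|c(\bw,\bu,\bv)| \lesssim S_f^2\|\bnabla \bw\|_{0,\Omega_{\mathrm{S}}}\|\bnabla \bu\|_{0,\Omega_{\mathrm{S}}}\|\bnabla \bv\|_{0,\Omega_{\mathrm{S}}}$; the $\mathbf{K}_h$ constraint then fixes a continuity constant depending on $\|\boldsymbol{f}_{\mathrm{S}}\|_{0,\Omega_{\mathrm{S}}}$.

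For coercivity, testing $\mathbf{A}$ against $\vec{\bz}_h = (\bu_{f,h}^{\mathrm{S}}, \bu_{r,h}^{\mathrm{P}}, \by_{s,h}^{\mathrm{P}}, \bu_{s,h}^{\mathrm{P}})$ produces the positive diagonal contributions $\rho_f\|\bu_{f,h}^{\mathrm{S}}\|_{0,\Omega_{\mathrm{S}}}^2$ and $\rho_p\|\bu_{s,h}^{\mathrm{P}}\|_{0,\Omega_{\mathrm{P}}}^2$ (after the antisymmetric pair at positions $(3,4)$--$(4,3)$ cancels), the diffusion norms $2\mu_f\|\beps(\bu_{f,h}^{\mathrm{S}})\|_{0,\Omega_{\mathrm{S}}}^2$ and $2\mu_p\|\beps(\by_{s,h}^{\mathrm{P}})\|_{0,\Omega_{\mathrm{P}}}^2+\lambda_p\|\nabla\cdot \by_{s,h}^{\mathrm{P}}\|_{0,\Omega_{\mathrm{P}}}^2$, the nonnegative combined quadratic $a_f^{\mathrm{P}}(\bu_{r,h}^{\mathrm{P}}+\by_{s,h}^{\mathrm{P}},\bu_{r,h}^{\mathrm{P}}+\by_{s,h}^{\mathrm{P}})$ (positive by \ref{(H1)}), the BJS seminorms $|\bu_{f,h}^{\mathrm{S}}-\by_{s,h}^{\mathrm{P}}|_{\mathrm{BJS}}^2+|\bu_{r,h}^{\mathrm{P}}|_{\mathrm{BJS}}^2$, the Darcy dissipation $(\phi^2\kappa^{-1}\bu_{r,h}^{\mathrm{P}},\bu_{r,h}^{\mathrm{P}})_{\Omega_{\mathrm{P}}}$ coercive by \ref{(H3)}, and the Nitsche penalty $\frac{\gamma}{h_E}\|\bn_{\mathrm{S}}\cdot\bu_{f,h}^{\mathrm{S}}+\bn_{\mathrm{P}}\cdot\bu_{r,h}^{\mathrm{P}}+\bn_{\mathrm{P}}\cdot \by_{s,h}^{\mathrm{P}}\|_{0,\Sigma}^2$. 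The indefinite pairs $\mathcal{B}_{\cdot,\Gamma}+\mathcal{B}_{\cdot,\Gamma}^*$ reduce to twice an interface pairing of $2\mu_f\beps(\bu_{f,h}^{\mathrm{S}})\bn_{\mathrm{S}}\cdot\bn_{\mathrm{S}}$ against the combined normal velocity, which \eqref{I1} splits into a bulk part $\delta_1 C_{\mathrm{tr}}^2\mu_f\|\beps(\bu_{f,h}^{\mathrm{S}})\|_{0,\Omega_{\mathrm{S}}}^2$ (absorbed by choosing $\delta_1$ small) and a boundary part $\frac{\mu_f}{\delta_1 h_E}\|\cdot\|_{0,\Sigma}^2$ (absorbed by the Nitsche penalty provided $\gamma$ exceeds a mesh-independent threshold $\gamma_0=\gamma_0(\delta_1,C_{\mathrm{tr}},\mu_f)$). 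The convective contribution is bounded by $|c(\bw_{f,h}^{\mathrm{S}},\bu_{f,h}^{\mathrm{S}},\bu_{f,h}^{\mathrm{S}})| \lesssim C^{-1}K_f^{-3}\|\boldsymbol{f}_{\mathrm{S}}\|_{0,\Omega_{\mathrm{S}}}\|\bnabla \bu_{f,h}^{\mathrm{S}}\|_{0,\Omega_{\mathrm{S}}}^2$ via the $\mathbf{K}_h$ bound, and it is absorbed by the residual Korn-based coercivity $2\mu_f K_f^{-2}\|\bnabla \bu_{f,h}^{\mathrm{S}}\|_{0,\Omega_{\mathrm{S}}}^2$ under the smallness hypothesis $\frac{4}{C^2}\|\boldsymbol{f}_{\mathrm{S}}\|_{0,\Omega_{\mathrm{S}}}<1$.

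The main obstacle is the joint calibration of $\delta_1$ and $\gamma$: the Nitsche threshold $\gamma_0$ depends on how much of the diffusion block remains available after the bulk contribution of \eqref{I1} and the convective estimate have been accommodated, while the admissible values of $\delta_1$ are bounded above by the need to preserve enough $a_f^{\mathrm{S}}$ coercivity to defeat the convective term under the smallness hypothesis. Once this triangular dependence is unwound, the resulting ellipticity constant and $\gamma_0$ depend only on the model parameters and remain independent of $h$, yielding both the continuity and the ellipticity of $\mathbf{A}$ as claimed.
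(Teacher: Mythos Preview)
Your approach is correct and essentially the same as the paper's: both prove continuity block by block via Cauchy--Schwarz, trace, and Sobolev inequalities (controlling the convective block through the $\mathbf{K}_h$ constraint), and both obtain coercivity by combining Korn's inequality, the Nitsche penalty $c_\Gamma$, and estimate~\eqref{I1} to absorb the indefinite $b_\Gamma$ interface traction, with the convective contribution handled via the smallness hypothesis on $\boldsymbol{f}_{\mathrm{S}}$. The only cosmetic difference is that the paper groups $-m_\theta$ with $a_f^{\mathrm{P}}$ (using Assumption~\ref{(H2)} so that $-m_\theta(\bv,\bv)\ge 0$) and bounds the $\bv_r$- and $\bw_s$-blocks separately, whereas you assemble the porous viscous cross-terms into the combined square $a_f^{\mathrm{P}}(\bu_{r,h}^{\mathrm{P}}+\by_{s,h}^{\mathrm{P}},\bu_{r,h}^{\mathrm{P}}+\by_{s,h}^{\mathrm{P}})$; you omit the $-m_\theta$ terms from your inventory, but under~\ref{(H2)} they contribute only nonnegatively, so this is not a gap.
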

\begin{proof}
From trace, Cauchy--Schwarz, Young inequalities,  and the bound in \eqref{I1}, it follows that there exist  constants $C_f, C_s, C_r, C_{\mathrm{BJS}}>0$  
 such that 
\begin{align*}
a_f^{\mathrm{S}}(\bu_{f,h}^{\mathrm{S}},\bv_{f,h}^{\mathrm{S}}) &\leq C_f \|\bu_{f,h}^{\mathrm{S}}\|_{1,\Omega_{\mathrm{S}}} \|\bv_{f,h}^{\mathrm{S}}\|_{1,\Omega_{\mathrm{S}}} 
, \\  a_f^{\mathrm{P}}(\by_{s,h}^{\mathrm{P}},\bw_{s,h}^{\mathrm{P}}) -m_{\theta}( \by_{s,h}^{\mathrm{P}},\bw_{s,h}^{\mathrm{P}}) &\leq C_s \|\by_{s,h}^{\mathrm{P}}\|_{1,\Omega_{\mathrm{P}}} \|\bw_{s,h}^{\mathrm{P}}\|_{1,\Omega_{\mathrm{P}}}, \\ 
a_f^{\mathrm{P}}(\bu_{r,h}^{\mathrm{P}},\bv_{r,h}^{\mathrm{P}}) -m_{\theta}( \bu_{r,h}^{\mathrm{P}},\bv_{r,h}^{\mathrm{P}}) + m_{\frac{\phi^2}{\kappa}}(\bu_{r,h}^{\mathrm{P}},\bv_{r,h}^{\mathrm{P}}) &\leq C_r \|\bu_{r,h}^{\mathrm{P}}\|_{1,\Omega_{\mathrm{P}}} \|\bv_{r,h}^{\mathrm{P}}\|_{1,\Omega_{\mathrm{P}}}, 
\end{align*}
and
\begin{align*}
 a_{\mathrm{BJS}}(\bu_{f,h}^{\mathrm{S}}, \bu_{s,h}^{\mathrm{P}}; \bv_{f,h}^{\mathrm{S}}, \bv_{s,h}^{\mathrm{P}}) +  b_{\mathrm{BJS}}(\bu_r^{\mathrm{P}}; \bv_r^{\mathrm{P}}) 
&\leq C_{\mathrm{BJS}}(\|\bu_{f,h}^{\mathrm{S}}\|_{1,\Omega_{\mathrm{S}}}+\|\bu_{s,h}^{\mathrm{P}}\|_{1,\Omega_{\mathrm{P}}}+ \|\bu_r^{\mathrm{P}}\|_{1,\Omega_{\mathrm{P}}}) \\ & \quad  \qquad \times (\|\bv_{f,h}^{\mathrm{S}}\|_{1,\Omega_{\mathrm{S}}}+\|\bv_{s,h}^{\mathrm{P}}\|_{1,\Omega_{\mathrm{P}}}  +\|\bv_r^{\mathrm{P}}\|_{1,\Omega_{\mathrm{P}}}), \\
 b_{\Gamma}(\bv_{f,h}^{\mathrm{S}}, \bv_{r,h}^{\mathrm{P}}, \bw_{s,h}^{\mathrm{P}} ; \bu_{f,h}^{\mathrm{S}}, 0 )   
 & \leq C \| \bu_{f,h}^{\mathrm{S}} \|_{1,\Omega_{\mathrm{S}}}  \bigl(   \frac{\gamma \mu_f}{h_E}  \|\bv_{f,h}^{\mathrm{S}}\! \cdot \!\bn_{\mathrm{S}}\! + \bv_{r,h}^{\mathrm{P}}\! \cdot \!\bn_{\mathrm{P}} \!+ \bw_{s,h}^{\mathrm{P}}\! \cdot\! \bn_{\mathrm{P}}  \|_{0,E}  ^2 \bigr)^{\frac12}, \\ 
 c_{\Gamma}(\bu_{f,h}^{\mathrm{S}}, \bu_{r,h}^{\mathrm{P}}, \by_{s,h}^{\mathrm{P}} ; \bv_{f,h}^{\mathrm{S}}, \bv_{r,h}^{\mathrm{P}}, \bw_{s,h}^{\mathrm{P}} )  & \leq C \frac{\gamma \mu_f}{h_E}   \| \bu_{f,h}^{\mathrm{S}}  \cdot \bn_{\mathrm{P}} + \bu_{r,h}^{\mathrm{P}}  \cdot \bn_{\mathrm{P}} + \by_{s,h}^{\mathrm{P}} \cdot \bn_{\mathrm{P}}\|_{0,E} \\ &  \qquad  \quad  \times \| \bv_{f,h}^{\mathrm{S}}  \cdot \bn_{\mathrm{P}} + \bv_{r,h}^{\mathrm{P}}  \cdot \bn_{\mathrm{P}} + \bw_{s,h}^{\mathrm{P}} \cdot \bn_{\mathrm{P}}\|_{0,E}.
\end{align*}

For the nonlinear term, we apply the Cauchy--Schwarz, Sobolev, and Korn's inequalities to obtain
\begin{align*}
    \boldsymbol{c}(\bw_{f,h}^{\mathrm{S}}, \bu_{f,h}^{\mathrm{S}}, \bv_{f,h}^{\mathrm{S}}) 
    &\leq  S_f^2 K_f^3 \|\varepsilon (\bw_{f,h}^{\mathrm{S}})\|_{0, \Omega_{\mathrm{S}}} \, \|\varepsilon ( \bu_{f,h}^{\mathrm{S}})\|_{0, \Omega_{\mathrm{S}}} \|\varepsilon (\bv_{f,h}^{\mathrm{S}})\|_{0, \Omega_{\mathrm{S}}}
    \leq \frac{C_{\mathrm{I}}}{4} \, \|\varepsilon ( \bu_{f,h}^{\mathrm{S}})\|_{0, \Omega_{\mathrm{S}}}\|\varepsilon ( \bv_{f,h}^{\mathrm{S}})\|_{0, \Omega_{\mathrm{S}}},
\end{align*}
and thus, the operator $\mathbf{A}$ is continuous.
 
On the other hand, we use Sobolev, Poincar\'e, Korn's inequalities \cite{MR2373954} and the assumptions in Section \ref{assumptions}, to arrive at the bounds
\begin{align*}
 a_f^{\mathrm{S}}(\bv_{f,h}^{\mathrm{S}},\bv_{f,h}^{\mathrm{S}}) + \boldsymbol{c}(\bw_{f,h}^{\mathrm{S}}, \bv _{f,h}^{\mathrm{S}}, \bv_{f,h}^{\mathrm{S}})  
& \geq \left(2 \mu_f - \frac{C_{\mathrm{I}}}{4}\right) \| \varepsilon(\bv_{f,h}^{\mathrm{S}})\|_{0,\Omega_{\mathrm{S}}}^2
,\\ 
 a_f^{\mathrm{P}}(\bw_{s,h}^{\mathrm{P}},\bw_{s,h}^{\mathrm{P}}) -m_{\theta}( \bw_{s,h}^{\mathrm{P}},\bw_{s,h}^{\mathrm{P}}) & \geq \min \{ 2 \mu_f \phi,  C_P^2 \} \|\bw_{s,h}^{\mathrm{P}}\|_{1,\Omega_{\mathrm{P}}}^2,  \\ 
   a_f^{\mathrm{P}}(\bv_{r,h}^{\mathrm{P}},\bv_{r,h}^{\mathrm{P}}) -m_{\theta}( \bv_{r,h}^{\mathrm{P}},\bv_{r,h}^{\mathrm{P}}) + m_{\phi^2/\kappa}(\bv_{r,h}^{\mathrm{P}},\bv_{r,h}^{\mathrm{P}}) & \geq \min \{ 2 \mu_f \phi,  C_P^2, \frac{\phi^2 C_P^2}{\kappa} \} \|\bv_{r,h}^{\mathrm{P}}\|_{1,\Omega_{\mathrm{P}}}^2, \\ 
  a_{\mathrm{BJS}}(\bv_{f,h}^{\mathrm{S}}, \bw_{s,h}^{\mathrm{P}}; \bv_{f,h}^{\mathrm{S}}, \bw_{s,h}^{\mathrm{P}}) +b_{\mathrm{BJS}}( \bv_{r,h}^{\mathrm{P}} ;  \bv_{r,h}^{\mathrm{P}})   &  \geq \mu_f \alpha_{\mathrm{BJS}} Z_{\mathrm{max}}^{-1/2}
(|\bv_{f,h}^{\mathrm{S}} - \bw_{s,h}^{\mathrm{P}}|_{\mathrm{BJS}}^2 + |\bv_{r,h}^{\mathrm{P}}|_{\mathrm{BJS}}^2
), \\
 b_{\Gamma}(\bv_{f,h}^{\mathrm{S}}, \bv_{r,h}^{\mathrm{P}}, \bw_{s,h}^{\mathrm{P}} ; \bv_{f,h}^{\mathrm{S}}, 0 ) &  \geq - \delta_1 C_{\mathrm{tr}}^2 \mu_f \| \varepsilon(\bv_{f,h}^{\mathrm{S}})\|_{0,\Omega_{\mathrm{S}}}^2 \\
 & \quad - \frac{\mu_f}{\delta_1} \sum_{E \in \mathcal{E}_{\Sigma}} \frac{1}{h_E}  ( \|\boldsymbol{v}_{f,h}^{\mathrm{S}} \cdot \bn_{\mathrm{S}} + \boldsymbol{v}_{r,h}^{\mathrm{P}} \cdot \bn_{\mathrm{P}}   + \boldsymbol{w}_{s,h}^{\mathrm{P}} \cdot \bn_{\mathrm{P}}\|_{0, E}^2 ),  
\\ 
  c_{\Gamma}(\bv_{f,h}^{\mathrm{S}}, \bv_{r,h}^{\mathrm{P}}, \bw_{s,h}^{\mathrm{P}} ; \bv_{f,h}^{\mathrm{S}}, \bv_{r,h}^{\mathrm{P}}, \bw_{s,h}^{\mathrm{P}} ) & \geq  \frac{\gamma }{h_E} \| \bv_{f,h}^{\mathrm{S}} \cdot \bn_{\mathrm{S}}+ \bv_{r,h}^{\mathrm{P}} \cdot \bn_{\mathrm{P}} + \bw_{s,h}^{\mathrm{P}} \cdot \bn_{\mathrm{P}} \|_{0,E}^2.
\end{align*}
 \cblue{Then, combining the bilinear forms $a_f^{\mathrm{S}}, b_{\Gamma}$, and $c_{\Gamma}$, we arrive at}
\begin{align*}
 &  \cred{ a_f^{\mathrm{S}}(\bv_{f,h}^{\mathrm{S}},\bv_{f,h}^{\mathrm{S}})  +  b_{\Gamma}(\bv_{f,h}^{\mathrm{S}}, \bv_{r,h}^{\mathrm{P}}, \bw_{s,h}^{\mathrm{P}} ; \bv_{f,h}^{\mathrm{S}}, 0 )  +  c_{\Gamma}(\bv_{f,h}^{\mathrm{S}}, \bv_{r,h}^{\mathrm{P}}, \bw_{s,h}^{\mathrm{P}} ; \bv_{f,h}^{\mathrm{S}}, \bv_{r,h}^{\mathrm{P}}, \bw_{s,h}^{\mathrm{P}} ) } \\ & \qquad  \cred{  \geq ( 2 \mu_f - \delta_1 C_{\mathrm{tr}}^2 \mu_f) \| \varepsilon(\bv_{f,h}^{\mathrm{S}})\|_{0,\Omega_{\mathrm{S}}}^2 + (\gamma - \frac{\mu_f}{\delta_1}) \sum_{E \in \mathcal{E}_{\Sigma}} \frac{1}{h_E} ( \|\boldsymbol{v}_{f,h}^{\mathrm{S}} \cdot \bn_{\mathrm{S}} + \boldsymbol{v}_{r,h}^{\mathrm{P}} \cdot \bn_{\mathrm{P}}   + \boldsymbol{w}_{s,h}^{\mathrm{P}} \cdot \bn_{\mathrm{P}}\|_{0, E}^2 ).} 
\end{align*}
 \cred{By selecting the positive parameter $\delta_1$ in such a way that satisfies $\delta_1 < \frac{2 }{ C_{\mathrm{tr}}^2 }$, we ensure that $\left(2 \mu_f - \delta_1 C_{\mathrm{tr}}^2 \mu_f \right) >0 $. Additionally, we define }
\begin{align*}
    \cred{  C_S = \min \left\{ ( 2 \mu_f - \delta_1 C_{\mathrm{tr}}^2 \mu_f), (\gamma - \frac{\mu_f}{\delta_1}), \mu_f \alpha_{\mathrm{BJS}} Z_{\mathrm{max}}^{-1/2}, 2 \mu_f \phi,  C_P^2, \frac{\phi^2 C_P^2}{\kappa}  \right\},}
\end{align*}
  \cred{  with $\gamma \geq \gamma_0 > \frac{\mu_f}{\delta_1}$. Note that this is the coercivity constant for the operator without the bilinear form $\boldsymbol{c}$, and it arises when introducing the set $\boldsymbol{K}_h$. 
  Next, we 
  choose $\delta_1 = \frac{1}{C_{\mathrm{tr}}^2}$, from which  we obtain}
\begin{align*}
 & \cred{ a_f^{\mathrm{S}}(\bv_{f,h}^{\mathrm{S}},\bv_{f,h}^{\mathrm{S}}) + \boldsymbol{c}(\bw_{f,h}^{\mathrm{S}}, \bv _{f,h}^{\mathrm{S}}, \bv_{f,h}^{\mathrm{S}})  +  b_{\Gamma}(\bv_{f,h}^{\mathrm{S}}, \bv_{r,h}^{\mathrm{P}}, \bw_{s,h}^{\mathrm{P}} ; \bv_{f,h}^{\mathrm{S}}, 0 )  +  c_{\Gamma}(\bv_{f,h}^{\mathrm{S}}, \bv_{r,h}^{\mathrm{P}}, \bw_{s,h}^{\mathrm{P}} ; \bv_{f,h}^{\mathrm{S}}, \bv_{r,h}^{\mathrm{P}}, \bw_{s,h}^{\mathrm{P}} ) } \\ & \qquad    \cred{\geq ( \mu_f - \frac{C_{\mathrm{I}}}{4} ) \| \varepsilon(\bv_{f,h}^{\mathrm{S}})\|_{0,\Omega_{\mathrm{S}}}^2 + (\gamma - \mu_f C_{\mathrm{tr}}^2 ) \sum_{E \in \mathcal{E}_{\Sigma}} \frac{1}{h_E} ( \|\boldsymbol{v}_f^{\mathrm{S}} \cdot \bn_{\mathrm{S}} + \boldsymbol{v}_{r,h}^{\mathrm{P}} \cdot \bn_{\mathrm{P}}   + \boldsymbol{w}_{s,h}^{\mathrm{P}} \cdot \bn_{\mathrm{P}}\|_{0, E}^2 ). }
\end{align*}
\cred{This estimate implies that  $\mathbf{A}$ is coercive with constant }
\begin{align*}
    \cred{  C_{\alpha} = \min \left\{ (  \mu_f - \frac{C_{\mathrm{I}}}{4} ), (\gamma - \mu_f C_{\mathrm{tr}}^2 ), \mu_f \alpha_{\mathrm{BJS}} Z_{\mathrm{max}}^{-1/2}, 2 \mu_f \phi,  C_P^2, \frac{\phi^2 C_P^2}{\kappa}  \right\},}
\end{align*}
\cred{provided that $C_{\mathrm{I}} < 4 \mu_f$.}
\end{proof}
\begin{lemma}\label{continuity-b}
The operator $\mathbf{B}$ and its transpose $\mathbf{B}^{T}$ are bounded and continuous. 
\end{lemma}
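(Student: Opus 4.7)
The operator $\mathbf{B}^T$ assembles two kinds of blocks: the bulk divergence--pressure couplings $b^{\mathrm{S}}$, $b_f^{\mathrm{P}}$, $b_s^{\mathrm{P}}$, and the Nitsche-type interface pressure pairings $\mathcal{C}_{f,\Gamma}$, $\mathcal{C}_{p,\Gamma}$, $\mathcal{C}_{s,\Gamma}$ obtained from $b_{\Gamma}$ by setting the strain-rate part of the second argument to zero. The plan is to bound each nonzero block separately in the natural energy norm on $\mathbf{V}_h$ and in the $L^2$ norm on $\mathrm{W}_h$, and then to combine the estimates via the triangle inequality. Boundedness of $\mathbf{B}$ follows from that of $\mathbf{B}^T$ by transposition, since both arise from the same bilinear forms.

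For the bulk terms, Cauchy--Schwarz immediately gives
\[
|b^{\mathrm{S}}(\bv_{f,h}^{\mathrm{S}},q_h^{\mathrm{S}})| \le \|\bv_{f,h}^{\mathrm{S}}\|_{1,\Omega_{\mathrm{S}}}\,\|q_h^{\mathrm{S}}\|_{0,\Omega_{\mathrm{S}}},
\qquad \text{and similarly for } b_s^{\mathrm{P}}.
\]
For the $\phi$-weighted form $b_f^{\mathrm{P}}(\bv_{r,h}^{\mathrm{P}},q_h^{\mathrm{P}}) = -(\nabla\!\cdot(\phi\,\bv_{r,h}^{\mathrm{P}}),q_h^{\mathrm{P}})$ I would expand via the product rule and invoke Assumption~\ref{(H1)}: since $\phi \in W^{s,r}(\Omega)$ with $s>d/r$, Sobolev embedding yields $\phi \in L^{\infty}(\Omega)$ together with an $L^p(\Omega)$ bound on $\nabla\phi$ that pairs, through Hölder's inequality, with the embedding $\mathbf{H}^1(\Omega_{\mathrm{P}})\hookrightarrow\mathbf{L}^{q}(\Omega_{\mathrm{P}})$. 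This produces $\|\nabla\!\cdot(\phi\,\bv_{r,h}^{\mathrm{P}})\|_{0,\Omega_{\mathrm{P}}}\lesssim \|\bv_{r,h}^{\mathrm{P}}\|_{1,\Omega_{\mathrm{P}}}$ with a constant depending only on $\|\phi\|_{W^{s,r}}$.

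For the three interface blocks, inspecting the definition of $b_{\Gamma}$ with a zero strain-rate argument shows that each of $\mathcal{C}_{f,\Gamma}$, $\mathcal{C}_{p,\Gamma}$, $\mathcal{C}_{s,\Gamma}$ collapses to an $L^2(\Sigma)$ pairing of the form $\int_{\Sigma} q_h^{\mathrm{S}}(\bn\cdot\bv)\,\mathrm{d}s$. For such pairings I would invoke the trace estimate~\eqref{I2} with a fixed order-one value of $\delta_2$, which delivers continuity against $\|q_h^{\mathrm{S}}\|_{0,\Omega_{\mathrm{S}}}$ and the $h_E^{-1}$-weighted $L^2(\Sigma)$ norm of $\bn\cdot\bv$---precisely the Nitsche contribution already present in the discrete energy norm on $\mathbf{V}_h$.

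Adding the three bulk and three interface bounds via the triangle inequality yields a mesh-independent estimate of the form $|(\mathbf{B}^T\vec{q}_h,\vec{\bv}_h)|\le C\|\vec{q}_h\|_{\mathrm{W}_h}\|\vec{\bv}_h\|_{\mathbf{V}_h}$, and $\mathbf{B}$ inherits the same constant by duality. The one step requiring some care is the $b_f^{\mathrm{P}}$ block: controlling the extra $\nabla\phi\cdot\bv_{r,h}^{\mathrm{P}}$ contribution forces the Hölder exponents to be chosen compatibly with the Sobolev embedding used for $\bv_{r,h}^{\mathrm{P}}$, and Assumption~\ref{(H1)} is precisely what guarantees this is possible; the remaining blocks are handled by Cauchy--Schwarz and by the already-established trace inequality~\eqref{I2}.
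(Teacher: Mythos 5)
Your proposal is correct and follows essentially the same route as the paper: expand $\langle\mathbf{B}\vec{\bv}_h,\vec{q}_h\rangle$ into the three bulk divergence pairings plus the single interface pairing, bound the bulk terms by Cauchy--Schwarz (with Assumption~\ref{(H1)} absorbing the $\phi$-weight) and the interface term by the trace estimate~\eqref{I2}. The only cosmetic difference is that the paper states the result as a Young-type sum-of-squares bound while you give the equivalent product-form estimate, and your treatment of the $\nabla\phi$ contribution in $b_f^{\mathrm{P}}$ is in fact more explicit than the paper's.
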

\begin{proof}
    For all $\vec{\bv}_h=(\bv_{f,h}^{\mathrm{S}},\bv_{r,h}^{\mathrm{P}}, \bw_{s,h}^{\mathrm{P}}, ) \in \vec{\mathbf{V}}_h$ and $\vec{q}_h=( q_h^{\mathrm{S}}, q_h^{\mathrm{P}}) \in \vec{Q}_h$, we can apply trace, Cauchy--Schwarz, and Young's inequalities to have 
    \begin{align*}
        \langle \mathbf{B}(\vec{\bv}_h),\vec{q}_h\rangle &= (\nabla \cdot \bv_{f,h}^{\mathrm{S}}, q_h^{\mathrm{S}}) + (\nabla \cdot \bw_{s,h}^{\mathrm{P}}, q_h^{\mathrm{P}}) + (\nabla \cdot (\phi\bv_{r,h}^{\mathrm{P}}), q_h^{\mathrm{P}})  + \int_{\Sigma}q_h^{\mathrm{S}}(\bn_{\mathrm{S}} \cdot \bv_{f,h}^{\mathrm{S}} +\bn_{\mathrm{P}} \cdot \bv_{r,h}^{\mathrm{P}} + \bn_{\mathrm{P}} \cdot \bw_{s,h}^{\mathrm{P}}) \ds\\ &
        \lesssim \|\bv_{f,h}^{\mathrm{S}}\|_{1,\Omega_{\mathrm{S}}}^2 +\|\bv_{s,h}^{\mathrm{P}}\|_{1,\Omega_{\mathrm{P}}}^2 +\|\bv_{r,h}^{\mathrm{P}}\|_{1,\Omega_{\mathrm{P}}}^2  + \|q_h^{\mathrm{P}}\|_{0,\Omega_{\mathrm{P}}}^2  +  \!\!\sum_{K\in\mathcal{T}_h} \!\!
      \frac{C_{\mathrm{tr}}^2\,\delta_2}{2\,\mu_f}\,\|q_h^{\mathrm{S}}\|_{0,K}^2 \\ & \quad 
    +\frac{1}{2\,\delta_2}
      \sum_{E\in\mathcal{E}_{\Sigma}}
        \frac{\mu_f}{h_E} (\|\bn_{\mathrm{S}} \cdot \bv_{f,h}^{\mathrm{S}}\|_{0,E}^2 + \|\bn_{\mathrm{P}} \cdot \bv_{r,h}^{\mathrm{P}}\|_{0,E}^2+ \|\bn_{\mathrm{P}} \cdot \bw_{s,h}^{\mathrm{P}}\|_{0,E}^2), 
    \end{align*}
    where we have also used \eqref{I2}. 
\end{proof}

The next results help to establish the Ladyzhenskaya--Babu\v{s}ka--Brezzi (LBB) condition for the $\mathbf{B}$ block.
\begin{lemma}\label{usual_inf}
There exists a constant $\xi_1(\Omega)>0$ such that
$$
\inf _{\substack{\vec{q}_h \in \mathrm{W}_h }} \sup _{\substack{ \vec{\bv}_h \in \mathbf{V}_h }} \frac{b^{\mathrm{S}} (\bv_{f,h}^{\mathrm{S}}, q_h^{\mathrm{S}})+b_{f}^{\mathrm{P}}( \bv_{r,h}^{\mathrm{P}},q_h^{\mathrm{P}}) + b^{\mathrm{P}}_{s}(\bw_{s,h}^{\mathrm{P}}, q_h^{\mathrm{P}})}{\| \vec{\bv}_h\|_{V_h}\|\vec{q}_h\|_{W_h}} \geq \xi_1 >0 .
$$
\end{lemma}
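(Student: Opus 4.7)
The plan is to reduce the combined inf-sup to two decoupled Stokes-type inf-sup conditions, one on each subdomain, exploiting the fact that the test triple $(\bv_{f,h}^{\mathrm{S}}, \bv_{r,h}^{\mathrm{P}}, \bw_{s,h}^{\mathrm{P}})$ can be chosen component by component. Given any $\vec{q}_h = (q_h^{\mathrm{S}}, q_h^{\mathrm{P}}) \in \mathrm{W}_h$, I will construct two velocity fields, one living in $\mathbf{V}_{f,h}$ and the other in $\mathbf{V}_{s,h}$, that deliver inf-sup control of the corresponding pressure, and set the remaining component $\bv_{r,h}^{\mathrm{P}} = \cero$.

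For $q_h^{\mathrm{S}}$ I will invoke the classical Taylor--Hood inf-sup on $\Omega_{\mathrm{S}}$: since the pair $(\mathbf{V}_{f,h},\mathrm{W}_{f,h})$ corresponds to continuous $\mathbb{P}_{k+1}/\mathbb{P}_k$ elements on a shape-regular quasi-uniform mesh with $k\geq 1$, and the essential condition is imposed only on $\Gamma_{\mathrm{S}}$ (with $\Sigma$ free), there exist $\beta_1>0$ and $\bv_{f,h}^{\mathrm{S}} \in \mathbf{V}_{f,h}$ such that $b^{\mathrm{S}}(\bv_{f,h}^{\mathrm{S}}, q_h^{\mathrm{S}}) \geq \beta_1 \|q_h^{\mathrm{S}}\|_{0,\Omega_{\mathrm{S}}}^2$ and $\|\bv_{f,h}^{\mathrm{S}}\|_{1,\Omega_{\mathrm{S}}} \leq C \|q_h^{\mathrm{S}}\|_{0,\Omega_{\mathrm{S}}}$. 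Analogously, on $\Omega_{\mathrm{P}}$ with the pair $(\mathbf{V}_{s,h},\mathrm{W}_{p,h})$ and the essential condition only on $\Gamma_{\mathrm{P}}$, I will produce $\bw_{s,h}^{\mathrm{P}} \in \mathbf{V}_{s,h}$ and $\beta_2 > 0$ with $b_s^{\mathrm{P}}(\bw_{s,h}^{\mathrm{P}}, q_h^{\mathrm{P}}) \geq \beta_2 \|q_h^{\mathrm{P}}\|_{0,\Omega_{\mathrm{P}}}^2$ and $\|\bw_{s,h}^{\mathrm{P}}\|_{1,\Omega_{\mathrm{P}}} \leq C \|q_h^{\mathrm{P}}\|_{0,\Omega_{\mathrm{P}}}$. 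Combining the two bounds with $\bv_{r,h}^{\mathrm{P}}= \cero$ gives a numerator $\gtrsim \|q_h^{\mathrm{S}}\|_{0,\Omega_{\mathrm{S}}}^2 + \|q_h^{\mathrm{P}}\|_{0,\Omega_{\mathrm{P}}}^2 = \|\vec{q}_h\|_{\mathrm{W}_h}^2$ against a denominator of order $\|\vec{q}_h\|_{\mathrm{W}_h}$, so one reads off $\xi_1 := \min(\beta_1,\beta_2)/C$.

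The only delicate point I anticipate is confirming that the classical Taylor--Hood inf-sup retains its mesh-independent constant in the present mixed-boundary setting, where $\Sigma$ appears as a free portion of $\partial\Omega_{\mathrm{S}}$ (respectively of $\partial\Omega_{\mathrm{P}}$). This is standard: surjectivity of the divergence from the subspace with homogeneous trace on $\Gamma_{\mathrm{S}}$ onto $L^2(\Omega_{\mathrm{S}})$ (no zero-mean needed thanks to the free portion $\Sigma$) combined with a Fortin operator for Taylor--Hood that preserves the essential boundary conditions yields the first bound; the same reasoning handles $\Omega_{\mathrm{P}}$, where $|\Gamma_{\mathrm{P}}^{\mathrm{N}}| > 0$ and the free piece $\Sigma$ together remove any mean-value restriction on $q_h^{\mathrm{P}}$. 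An alternative route would test $q_h^{\mathrm{P}}$ through $b_f^{\mathrm{P}}$ rather than $b_s^{\mathrm{P}}$; this requires inverting multiplication by $\phi$, which is controlled by assumption \ref{(H1)}.
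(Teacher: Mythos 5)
Your argument is correct, and it follows the same basic strategy as the paper --- decouple the combined condition into one subdomain inf-sup per pressure component, choosing the test triple componentwise with the unused entry set to $\cero$ --- but it differs in which bilinear form is used to control $q_h^{\mathrm{P}}$. The paper's (very terse) proof invokes the usual Stokes inf-sup together with the \emph{weighted} inf-sup condition of \cite[Lemma 14]{MR4253885}, i.e.\ it controls $q_h^{\mathrm{P}}$ through the $\phi$-weighted divergence $b_f^{\mathrm{P}}(\bv_{r,h}^{\mathrm{P}},q_h^{\mathrm{P}})=-(\nabla\cdot(\phi\,\bv_{r,h}^{\mathrm{P}}),q_h^{\mathrm{P}})$ acting on the relative velocity, which is exactly the alternative route you flag at the end and which indeed requires the properties of $\phi$ from \ref{(H1)}. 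You instead control $q_h^{\mathrm{P}}$ through the unweighted form $b_s^{\mathrm{P}}$ on the displacement pair $(\mathbf{V}_{s,h},\mathrm{W}_{p,h})$, so that both pressures are handled by the standard Taylor--Hood inf-sup and the weight $\phi$ never enters. This is legitimate because the numerator pairs $q_h^{\mathrm{P}}$ with the \emph{sum} $b_f^{\mathrm{P}}(\bv_{r,h}^{\mathrm{P}},q_h^{\mathrm{P}})+b_s^{\mathrm{P}}(\bw_{s,h}^{\mathrm{P}},q_h^{\mathrm{P}})$ and either summand alone suffices once the other test function is set to zero; your observation that the free interface portion $\Sigma$ (together with $|\Gamma_{\mathrm{P}}^{\mathrm{N}}|>0$) removes any mean-value constraint on the pressures, so that the subdomain inf-sup conditions hold over all of $L^2(\Omega_{\mathrm{S}})$ and $L^2(\Omega_{\mathrm{P}})$, is precisely the point that makes the decoupling work. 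Your route is arguably the more elementary of the two, at the cost of not exercising the relative-velocity block; the paper's route keeps the option of testing with $\bv_{r,h}^{\mathrm{P}}$, which is the form that actually reappears in the stability and error analysis.
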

\begin{proof}
This is proven by using the usual inf-sup condition for the Stokes problem \cite{MR3097958} and the weighted inf-sup condition in \cite[Lemma 14]{MR4253885}.
\end{proof}
\begin{lemma}\label{inf_sup}
There exists a constant $\xi_2 (\Omega) >0$, such that
\begin{align*}
   \inf _{\substack{\vec{q}_h \in \mathrm{W}_h }} \sup _{\substack{ \vec{\bv}_h \in  \cred{ \mathbf{V}_h }} } \frac{
 \langle \mathbf{B}(\vec{\bv}_h),\vec{q}_h\rangle}
{\left|\!\left|\!\left|  \vec{\bv}_h \right|\!\right|\!\right| \|\vec{q}_h\|_{W_h}}   \geq \xi_2,
\end{align*}
where 
\begin{align*}
    \langle \mathbf{B}(\vec{\bv}_h),\vec{q}_h\rangle & := b^{\mathrm{S}} (\bv_{f,h}^{\mathrm{S}}, q_h^{\mathrm{S}})+b_{f}^{\mathrm{P}}( \bv_{r,h}^{\mathrm{P}},q_h^{\mathrm{P}}) + b^{\mathrm{P}}_{s}(\bw_{s,h}^{\mathrm{P}}, q_h^{\mathrm{P}})  + \mathcal{C}_{f,\Gamma}(\bv_{f,h}^{\mathrm{S}},  q_h^{\mathrm{S}} ) + \mathcal{C}_{p,\Gamma} (\bv_{r,h}^{\mathrm{P}}, q_h^{\mathrm{S}} )+ \mathcal{C}_{s,\Gamma} ( \bw_{s,h}^{\mathrm{P}}, q_h^{\mathrm{S}} ),\end{align*}
    and 
    \begin{align*}
 \left|\!\left|\!\left| \vec{\bv}_h  \right|\!\right|\!\right|  &:= \| \bv_{f, h}^{\mathrm{S}} \|_{1,\Omega_{\mathrm{S}}} + \| \bv_{r, h}^{\mathrm{P}} \|_{1,\Omega_{\mathrm{P}}} + \|  \bw_{s,h}^{\mathrm{P}} \|_{1,\Omega_{\mathrm{P}}}   + \frac{1}{h_E} \sum_{E \in \mathcal{E}_{\Sigma}}  \| \bn_{\mathrm{S}}\cdot v_{f,h}^{\mathrm{S}}
    +\bn_{\mathrm{P}}\cdot v_{r,h}^{\mathrm{P}}
    +\bn_{\mathrm{P}}\cdot w_{s,h}^{\mathrm{P}} \|_{0,E}.
    \end{align*}
\end{lemma} 
\begin{proof}
The term $b_{\Gamma}(\bv_{f,h}^{\mathrm{S}}, \bv_{r,h}^{\mathrm{P}}, \bw_{s,h}^{\mathrm{P}} ; \cero , q_h^{\mathrm{S}} )$ is defined as 
\begin{align*}
       b_{\Gamma}(\bv_{f,h}^{\mathrm{S}}, \bv_{r,h}^{\mathrm{P}}, \bw_{s,h}^{\mathrm{P}} ; \cero, q_h^{\mathrm{S}} )  & \coloneqq  \mathcal{C}_{f,\Gamma}(\bv_{f,h}^{\mathrm{S}},  q_h^{\mathrm{S}} ) + \mathcal{C}_{p,\Gamma} (\bv_{r,h}^{\mathrm{P}}, q_h^{\mathrm{S}} )+ \mathcal{C}_{s,\Gamma} ( \bw_{s,h}^{\mathrm{P}}, q_h^{\mathrm{S}} ) \\ & = \int_{\Sigma} q_h^{\mathrm{S}} (\bn_{\mathrm{S}} \cdot \bv_{f,h}^{\mathrm{S}} +\bn_{\mathrm{P}} \cdot \bv_{r,h}^{\mathrm{P}} + \bn_{\mathrm{P}} \cdot \bw_{s,h}^{\mathrm{P}}) \ds. 
\end{align*}
\cred{We follow the same idea discussed in \cite{bansal2024nitsche}.} Consider the discrete space associated with the strong imposition of the mass balance across the interface
\begin{equation}\label{def:V0}\mathbf{V}_{h,0} = \{ \boldsymbol{v}_h \in \mathbf{V}_h : \bn_{\mathrm{S}} \cdot \bv_{f,h}^{\mathrm{S}} +\bn_{\mathrm{P}} \cdot \bv_{r,h}^{\mathrm{P}} + \bn_{\mathrm{P}} \cdot \bw_{s,h}^{\mathrm{P}} =0 \,\text{ on }\, \Sigma\}.\end{equation}
This space naturally yields that $b_{\Gamma}(\bv_{f,h}^{\mathrm{S}}, \bv_{r,h}^{\mathrm{P}}, \bw_{s,h}^{\mathrm{P}} ; \cero, q_h^{\mathrm{S}} ) = 0$ for all $\boldsymbol{v}_h \in \mathbf{V}_{h,0}$, so we obtain
\begin{align*}
  		\sup _{\cero \neq \vec{\bv}_h \in \mathbf{V}_{h,0}} \frac{ \langle \mathbf{B}(\vec{\bv}_h),\vec{q}_h\rangle }{\| \vec{\bv}_h\|_{\mathbf{V}_{h}}} & = \sup_{\boldsymbol{0} \neq \vec{\bv}_h \in \mathbf{V}_{h,0}} \frac{b^{\mathrm{S}} (\bv_{f,h}^{\mathrm{S}}, q_h^{\mathrm{S}})+b_{f}^{\mathrm{P}}( \bv_{r,h}^{\mathrm{P}},q_h^{\mathrm{P}}) + b^{\mathrm{P}}_{s}(\bw_{s,h}^{\mathrm{P}}, q_h^{\mathrm{P}}) }{\| \vec{\bv}_h\|_{\mathbf{V}_{h}}}   \geq \xi_1 \| \vec{q}_h \|_{W_h} \quad \forall \vec{q}_h \in \mathrm{W}_h,
\end{align*}
by virtue of Lemma \ref{usual_inf}.  Now, we can use this space to derive a lower bound as follows
\begin{align*} 
  		\sup _{\cero \neq \vec{\bv}_h \in  \cred{  \mathbf{V}_{h}} }\frac{\langle \mathbf{B}(\vec{\bv}_h),\vec{q}_h\rangle}{ \left|\!\left|\!\left| \vec{\bv}_h \right|\!\right|\!\right|} \geq \sup _{\cero \neq \vec{\bv}_h \in \mathbf{V}_{h,0}} \frac{\langle \mathbf{B}(\vec{\bv}_h),\vec{q}_h\rangle}{\| \vec{\bv}_h\|_{\mathbf{V}_{h}}} \geq \xi_2 \| \vec{q}_h \|_{W_h} \quad \forall \vec{q}_h \in \mathrm{W}_h, 
\end{align*}
which concludes the proof. 
\end{proof}

Before presenting the proof of existence and uniqueness of a solution to the discrete problem \eqref{mixed-primal}, we require some additional results.

We denote the bilinear forms associated with the matrices \(\mathbf{A}\), \(\mathbf{B}\), and \(\mathbf{C}\) by $\phi_{\mathbf{A}}(\bullet,\bullet)$, $\phi_{\mathbf{B}}(\bullet,\bullet)$, and $\phi_{\mathbf{C}}(\bullet,\bullet)$,  defined on  $(\mathbf{V}_h \times \mathbf{W}_{s,h}) \times (\mathbf{V}_h \times \mathbf{W}_{s,h})$, 
 $\mathbf{V}_h \times \mathrm{W}_h$, and 
$\mathrm{W}_h \times \mathrm{W}_h$, respectively. For a given $\bw_{f,h}^{\mathrm{S}} \in \mathbf{K}_h$, we write 
\begin{gather*}
\phi_{\mathbf{A}}(\bw_{f,h}^{\mathrm{S}}; (\vec{\bu}_h, \bu_{s, h}^{\mathrm{S}}), (\vec{\bv}_h, \bv_{s, h}^{\mathrm{S}})) 
= \begin{pmatrix} \vec{\bv}_h \\ \bv_{s, h}^{\mathrm{S}} \end{pmatrix}^T \!
\mathbf{A} 
\begin{pmatrix} \vec{\bu}_h \\ \bu_{s, h}^{\mathrm{S}} \end{pmatrix}, \quad 
\phi_{\mathbf{B}}(\vec{\bv}_h, \vec{p}_h) 
= \vec{\bv}_h^T \mathbf{B} \vec{p}_h, \quad 
\phi_{\mathbf{C}}(\vec{p}_h, \vec{q}_h) 
= \vec{q}_h^T \mathbf{C} \vec{p}_h.
\end{gather*}
By identifying functions in the $\mathrm{FE}$ spaces with algebraic vectors of their corresponding degrees of freedom, we note that $\operatorname{ker}(\phi_{\mathbf{A}})=\operatorname{ker}(\mathbf{A}), \; \operatorname{ker}(\phi_{\mathbf{B}})=\operatorname{ker}(\mathbf{B})$, and $\operatorname{ker}(\phi_{\mathbf{C}})=\operatorname{ker}(\mathbf{C})$. Also, for $\phi_{\mathbf{B}^T}(\vec{q}_h,\vec{\bv}_h)=$ $\phi_{\mathbf{B}}(  \vec{\bv}_h , \vec{q}_h )$, we have that $\operatorname{ker}(\phi_{\mathbf{B}^T})=\operatorname{ker}(\mathbf{B}^T)$. 
\begin{remark}
   \cred{ We introduce the bilinear forms $\phi_{\mathbf{A}}(\bullet,\bullet)$, $\phi_{\mathbf{B}}(\bullet,\bullet)$, and $\phi_{\mathbf{C}}(\bullet,\bullet)$ in order to simplify the presentation of the algebraic arguments in Lemma \ref{invertibilty}. Although these notations are only used locally, they allow us to express kernel properties and coercivity arguments in a more transparent functional framework.}
\end{remark}
\begin{lemma}\label{invertibilty}
Under Assumptions~\ref{(H1)}-\ref{(H3)} and $ \frac{4}{C_{\mathrm{I}} ^2} \| \boldsymbol{f}_{\mathrm{S}} \|_{0,\Omega_{\mathrm{S}}} < 1 $, the bilinear forms $\phi_{\mathbf{A}}(\bullet,\bullet), \phi_{\mathbf{B}}(\bullet,\bullet)$ and $\phi_{\mathbf{C}}(\bullet,\bullet)$ satisfy
$$
\operatorname{ker}(\phi_{\mathbf{A}}) \cap \operatorname{ker}(\phi_{\mathbf{B}}) =\{ \cero \}, \quad 
\operatorname{ker}(\phi_{\mathbf{C}}) \cap \operatorname{ker}(\phi_{\mathbf{B}^T})  =\{0 \},
$$
for any given $ \bw_{f,h}^{\mathrm{S}} \in \mathbf{K}_h$. Moreover, $\phi_{\mathbf{A}}(\bullet,\bullet)$ and $\phi_{\mathbf{C}}(\bullet,\bullet)$ are positive definite and semi-definite, respectively.
\end{lemma}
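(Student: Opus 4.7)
The plan is to dispatch the four claims by leveraging the already established Lemma \ref{coercivity-continuity} (coercivity of $\mathbf{A}$), the inf--sup condition from Lemma \ref{inf_sup}, and a direct inspection of the $\mathbf{C}$ block. The logical order I would follow is: prove positive definiteness of $\phi_{\mathbf{A}}$ first, then semi-definiteness of $\phi_{\mathbf{C}}$, and finally deduce the two kernel intersections as corollaries.

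For the positive definiteness of $\phi_{\mathbf{A}}$, I would appeal directly to Lemma \ref{coercivity-continuity}, which---under Assumptions \ref{(H1)}--\ref{(H3)} and the smallness condition $\tfrac{4}{C^2}\|\ff_{\mathrm{S}}\|_{0,\Omega_{\mathrm{S}}}<1$---gives, for any fixed $\bw_{f,h}^{\mathrm{S}}\in\mathbf{K}_h$, a bound of the shape $\phi_{\mathbf{A}}(\bw_{f,h}^{\mathrm{S}};(\vec{\bv}_h,\bv_{s,h}^{\mathrm{P}}),(\vec{\bv}_h,\bv_{s,h}^{\mathrm{P}})) \gtrsim \|\vec{\bv}_h\|_{\mathbf{V}_h}^2 + \|\bv_{s,h}^{\mathrm{P}}\|_{0,\Omega_{\mathrm{P}}}^2$, where the right-hand side absorbs the interface penalty terms $c_\Gamma$ and the BJS contributions that already appear in that proof. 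This immediately yields $\ker(\phi_{\mathbf{A}})=\{\cero\}$, from which the first kernel intersection $\ker(\phi_{\mathbf{A}}) \cap \ker(\phi_{\mathbf{B}}) = \{\cero\}$ is automatic, irrespective of the structure of $\mathbf{B}$.

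For $\phi_{\mathbf{C}}$, a direct computation using the block form of $\mathbf{C}$ gives $\phi_{\mathbf{C}}(\vec{q}_h,\vec{q}_h)=\bigl((1-\phi)^2 K^{-1} q_h^{\mathrm{P}}, q_h^{\mathrm{P}}\bigr)_{\Omega_{\mathrm{P}}}\geq 0$, where non-negativity is guaranteed by Assumption \ref{(H1)} and the positivity of $K$. This identifies $\ker(\phi_{\mathbf{C}}) = \{(q_h^{\mathrm{S}},0)\colon q_h^{\mathrm{S}}\in\mathrm{W}_{f,h}\}$, so $\phi_{\mathbf{C}}$ is positive semi-definite but not positive definite. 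For the second kernel intersection, I would take $\vec{q}_h\in\ker(\phi_{\mathbf{B}^T})=\ker(\mathbf{B}^T)$, which means $\langle \mathbf{B}(\vec{\bv}_h),\vec{q}_h\rangle=0$ for every $\vec{\bv}_h\in\mathbf{V}_h$. The inf--sup condition of Lemma \ref{inf_sup} then forces $\xi_2\|\vec{q}_h\|_{W_h}\leq 0$, hence $\vec{q}_h=0$, and the intersection is trivially $\{0\}$.

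I do not foresee a substantial technical obstacle, since the heavy lifting was done in Lemmas \ref{coercivity-continuity} and \ref{inf_sup}. The only point requiring a little care is to verify that the bilinear form $\langle \mathbf{B}(\vec{\bv}_h),\vec{q}_h\rangle$ used in Lemma \ref{inf_sup} matches exactly the off-diagonal block $\mathbf{B}^T$ appearing in the saddle-point decomposition of $\mathbf{M}+\mathbf{N}$---in particular, that the interface-trace contributions $\mathcal{C}_{f,\Gamma}$, $\mathcal{C}_{p,\Gamma}$, $\mathcal{C}_{s,\Gamma}$ arising from the Nitsche consistency terms $b_\Gamma(\,\cdot\,;\varsigma \bv_{f,h}^{\mathrm{S}},-q_h^{\mathrm{S}})$ are accounted for on both sides of the identification $\phi_{\mathbf{B}^T}(\vec{q}_h,\vec{\bv}_h)=\phi_{\mathbf{B}}(\vec{\bv}_h,\vec{q}_h)$ consistently with the matrix $\mathbf{B}^T$ displayed after Lemma \ref{continuity-b}.
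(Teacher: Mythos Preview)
Your proposal is correct and follows essentially the same approach as the paper: the paper also derives $\ker(\phi_{\mathbf{A}})=\{\cero\}$ directly from the coercivity in Lemma~\ref{coercivity-continuity}, obtains $\ker(\phi_{\mathbf{B}^T})=\{0\}$ from the inf--sup condition of Lemma~\ref{inf_sup}, and dismisses the positive semi-definiteness of $\phi_{\mathbf{C}}$ as straightforward. Your additional remarks---the explicit identification of $\ker(\phi_{\mathbf{C}})$ and the caution about matching the Nitsche interface terms in $\mathbf{B}^T$ with those in Lemma~\ref{inf_sup}---are more detailed than the paper's treatment but do not constitute a different route.
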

\begin{proof}
Lemma \ref{coercivity-continuity} implies the coercivity of $\phi_{\mathbf{A}}(\bullet,\bullet)$ and $\operatorname{ker}(\phi_{\mathbf{A}})=\{\cero\}$, hence the first statement of the lemma follows. We next note that $\operatorname{ker}(\phi_{\mathbf{B}^T})$ consists of $\vec{q}_h \in \mathrm{W}_h$ such that
$$
\phi_{\mathbf{B}^T}(\vec{q}_h , \vec{\bv}_h )=0, \quad \forall \vec{\bv}_h  \in \mathbf{V}_h.
$$
Therefore, the inf-sup condition from Lemma~\ref{inf_sup} implies that $\operatorname{ker}(\phi_{\mathbf{B}^T})=\{0\}$, which gives the second statement of the lemma. The positive semi-definiteness of $\phi_{\mathbf{C}}(\bullet,\bullet)$ is straightforward.
\end{proof}

Now, we are in position to establish the well-posedness of the fixed-point operator $\mathcal{J}_h$.
\begin{lemma}\label{invertibilty1}
Under Assumptions~\ref{(H1)}--\ref{(H3)} and $ \frac{4}{C_{\mathrm{I}}^2} \| \boldsymbol{f}_{\mathrm{S}} \|_{0,\Omega_{\mathrm{S}}} < 1 $, if the matrices $\mathbf{A}$ and $\mathbf{C}$ are positive semi-definite and $\operatorname{ker}(\mathbf{A}) \cap \operatorname{ker}(\mathbf{B}) = \operatorname{ker}(\mathbf{C}) \cap \operatorname{ker}(\mathbf{B}^T) = \{0\}$, then the matrix $\mathbf{M} + \mathbf{N}$ is invertible for any given $ \bw_{f,h}^{\mathrm{S}} \in \mathbf{K}_h$.
 \end{lemma}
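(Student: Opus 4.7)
The plan is to prove invertibility of the square matrix $\mathbf{M}+\mathbf{N}$ by showing that its kernel is trivial, exploiting the generalized saddle-point block structure and the two kernel-intersection hypotheses, supported by the coercivity of $\mathbf{A}$ already at hand from Lemma~\ref{coercivity-continuity}. Since we are in finite dimensions, injectivity is equivalent to invertibility.

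First I would pick any $(\vec{\bx}_h,\vec{p}_h)^T\in\operatorname{ker}(\mathbf{M}+\mathbf{N})$ and write out the two block equations
\begin{equation*}
\mathbf{A}\vec{\bx}_h+\mathbf{B}^T\vec{p}_h=0,\qquad
-\mathbf{B}\vec{\bx}_h+\mathbf{C}\vec{p}_h=0.
\end{equation*}
Testing the first with $\vec{\bx}_h$ and the second with $\vec{p}_h$ and summing, the off-diagonal terms cancel by the algebraic identity $\vec{\bx}_h^{T}\mathbf{B}^{T}\vec{p}_h=\vec{p}_h^{T}\mathbf{B}\vec{\bx}_h$, leaving the key energy identity
\begin{equation*}
\vec{\bx}_h^{T}\mathbf{A}\vec{\bx}_h+\vec{p}_h^{T}\mathbf{C}\vec{p}_h=0.
\end{equation*}
The positive semi-definiteness assumed on $\mathbf{A}$ and $\mathbf{C}$ (and verified by Lemma~\ref{invertibilty} in the case of interest) forces both quadratic contributions to vanish individually.

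Next I would upgrade these vanishing quadratic forms to statements about the kernels. For $\mathbf{A}$, the coercivity estimate $\phi_{\mathbf{A}}(\vec{\bx}_h,\vec{\bx}_h)\gtrsim\|\vec{\bx}_h\|^{2}$ established in Lemma~\ref{coercivity-continuity}, valid under the smallness condition $\tfrac{4}{C^{2}}\|\boldsymbol{f}_{\mathrm{S}}\|_{0,\Omega_{\mathrm{S}}}<1$ and for the given $\bw_{f,h}^{\mathrm{S}}\in\mathbf{K}_h$, immediately gives $\vec{\bx}_h=\cero$. Substituting back into the two block equations yields $\mathbf{B}^{T}\vec{p}_h=0$ and $\mathbf{C}\vec{p}_h=0$ (the latter obtained from $\vec{p}_h^{T}\mathbf{C}\vec{p}_h=0$ together with the symmetric, positive semi-definite mass-matrix structure of $\mathbf{C}$). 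Hence $\vec{p}_h\in\operatorname{ker}(\mathbf{C})\cap\operatorname{ker}(\mathbf{B}^{T})=\{0\}$ by Lemma~\ref{invertibilty}, giving $\vec{p}_h=0$ and concluding $\operatorname{ker}(\mathbf{M}+\mathbf{N})=\{0\}$.

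The delicate point to watch is the passage from $\vec{\bx}_h^{T}\mathbf{A}\vec{\bx}_h=0$ to $\vec{\bx}_h=\cero$: because $\mathbf{A}$ is not symmetric (it contains the linearized convection contribution $\mathcal{C}$, the Nitsche consistency pieces $\mathcal{B}_{f,\Gamma}$, $\mathcal{B}_{p,\Gamma}$, $\mathcal{B}_{e,\Gamma}$ and their transposes, and the skew structure of the DAE coupling), positive semi-definiteness of the quadratic form alone does not generally control the full kernel of the operator. This is precisely why the small-data condition $\tfrac{4}{C^{2}}\|\boldsymbol{f}_{\mathrm{S}}\|_{0,\Omega_{\mathrm{S}}}<1$ enters through Lemma~\ref{coercivity-continuity}, turning semi-definiteness into genuine coercivity of $\phi_{\mathbf{A}}$ and allowing the above chain of implications to close cleanly.
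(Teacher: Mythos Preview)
The paper states this lemma without proof, treating it as a standard fact about generalized saddle-point systems. Your energy argument is the canonical one and is correct: the off-diagonal cancellation gives $\vec{\bx}_h^{T}\mathbf{A}\vec{\bx}_h+\vec{p}_h^{T}\mathbf{C}\vec{p}_h=0$, coercivity of $\phi_{\mathbf{A}}$ from Lemma~\ref{coercivity-continuity} (which is where the small-data condition enters, as you note) forces $\vec{\bx}_h=\cero$, and then the kernel-intersection hypothesis finishes off $\vec{p}_h$.

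Two minor remarks. First, once $\vec{\bx}_h=\cero$, the second block equation $-\mathbf{B}\vec{\bx}_h+\mathbf{C}\vec{p}_h=0$ already yields $\mathbf{C}\vec{p}_h=0$ directly, so your parenthetical detour through $\vec{p}_h^{T}\mathbf{C}\vec{p}_h=0$ and the mass-matrix structure of $\mathbf{C}$ is unnecessary (though not wrong). Second, because you use the full coercivity of $\mathbf{A}$ rather than mere positive semi-definiteness, the hypothesis $\operatorname{ker}(\mathbf{A})\cap\operatorname{ker}(\mathbf{B})=\{0\}$ is never actually invoked in your argument; this is harmless, since coercivity already gives $\operatorname{ker}(\mathbf{A})=\{0\}$, and your final paragraph correctly identifies why the stronger property is needed for a non-symmetric $\mathbf{A}$.
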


\begin{theorem}\label{E+H}
For each $\boldsymbol{f}_{\mathrm{S}} \in L^{\infty}(\mathrm{I};L^2(\Omega_{\mathrm{S}})), 
\boldsymbol{f}_{\mathrm{P}} \in L^{\infty}(0,T;L^2(\Omega_{\mathrm{P}})),$  
$\theta \in L^{\infty}(0,T;L^2(\Omega_{\mathrm{P}}))$, and compatible initial data $ ( \bu_{r,h}^{\mathrm{P}}(0), \bu_{s,h}^{\mathrm{P}}(0), \bu_{f,h}^{\mathrm{S}}(0), p^{\mathrm{P}}_h(0), \bsigma^{\mathrm{P}}_{h}(0) ) \in \mathbf{V}_{r,h}  \times \mathbf{V}_{s,h} \times \mathbf{V}_{f,h} \times \mathrm{W}_{p,h} \times \bZ_h$ under   assumptions \ref{(H1)}-\ref{(H3)}. Moreover, assume that $ \frac{4}{C_{\mathrm{I}}^2} \| \boldsymbol{f}_{\mathrm{S}} \|_{0,\Omega_{\mathrm{S}}} < 1 $, for any $\widehat{\epsilon}_f^{\prime}, \breve{\epsilon}_f^{\prime}$ that satisfy
$$
\frac{3- 2 (\varsigma+1) \widehat{\epsilon}_f^{\prime} C_{\mathrm{tr}}}{4} >0,
$$
where $\varsigma \in\{-1,0,1\}$ provided that $\gamma> (\varsigma + 1) (\widehat{\epsilon}^{\prime}_f)^{-1}$, there exists a unique solution $(\bu^{\mathrm{S}}_{f,h}, p^{\mathrm{S}}_h,  \bu_{r,h}^{\mathrm{P}}, p^{\mathrm{P}}_h,  \by_{s,h}^{\mathrm{P}} ,\bu_{s,h}^{\mathrm{P}})   \in  W^{1,\infty}(0, T ; \mathbf{V}_{f,h}) \times L^{\infty}(0, T ; \mathrm{W}_{f,h}) \times W^{1,\infty}(0, T ; \mathbf{V}_{r,h})$ $\times  W^{1,\infty}(0, T ; \mathrm{W}_{p,h}) \times W^{1,\infty}(0, T ; \mathbf{V}_{s,h}) \times $  $W^{1,\infty}(0, T ; \mathbf{W}_{s,h})$ of the weak formulation \eqref{mixed-primal}, for any given $ \bw_{f,h}^{\mathrm{S}} \in \mathbf{K}_h$.
\end{theorem}
\begin{proof}
The matrix \(\mathbf{M}\) has no zero rows, which implies that the system has no algebraic constraints. Hence, the initial data can be chosen to satisfy the prescribed boundary conditions. In particular, the initial values
\[
\bu_{f,h}^{\mathrm{S}}(0) = \bu_{f,0},\quad
\bu_{r,h}^{\mathrm{P}}(0) = \bu_{r,0},\quad
\by_{s,h}^{\mathrm{P}}(0) = \by_{s,0},\quad
p_h^{\mathrm{P}}(0) = p_h^{\mathrm{P},0},\quad
\bu_{s,h}^{\mathrm{P}}(0) = \bu_{s,0}
\]
are consistent and do not lead to any incompatibility issues. Furthermore, owing to Lemma~\ref{invertibilty1}, the matrix \(\mathbf{M} + \mathbf{N}\) with \(s = 1\) is invertible. According to the DAE theory (see \cite[Theorem~2.3.1]{MR1101809}), if the matrix pencil \(s\,\mathbf{M} + \mathbf{N}\) is nonsingular for some \(s\neq 0\) and the initial data are consistent, then the system \eqref{mixed-primal} admits a solution. Consequently, \cite[Theorem~2.3.1]{MR1101809} guarantees the existence of a solution to the weak semi-discrete formulation~\eqref{mixed-primal}.

To show uniqueness, we assume that there are two solutions satisfying these equations with the same initial conditions. Then, we readily have that their difference $(\tilde{\bu}_{f,h}^{\mathrm{S}} , \tilde{p}_h^{\mathrm{S}} ,\tilde{\bu}_{r,h}^{\mathrm{P}}, \tilde{p}_h^{\mathrm{P}}, \tilde{\by}_{s,h}^{\mathrm{P}}, \tilde{\bu}_{s,h}^{\mathrm{P}})$ satisfies \eqref{mixed-primal} with zero data. By setting $$(\bv_{f,h}^{\mathrm{S}}, q_h^{\mathrm{S}}, \bv_{r,h}^{\mathrm{P}}, q_h^{\mathrm{P}}, \bw_{s,h}^{\mathrm{P}}, \bv_{s,h}^{\mathrm{P}})
= (\tilde{\bu}_{f,h}^{\mathrm{S}}, \tilde{p}_h^{\mathrm{S}}, \tilde{\bu}_{r,h}^{\mathrm{P}}, \tilde{p}_h^{\mathrm{P}}, \partial_t \tilde{\by}_{s,h}^{\mathrm{P}},\tilde{\bu}_{s,h}^{\mathrm{P}}),$$ in \eqref{mixed-primal} and using the well-known inequality 
\begin{equation}\label{ineq:aux} 
-(\xi a, a) - (\xi b, b) \leq 2 (\xi a, b) \leq (\xi a, a) + (\xi b, b),
\end{equation}
we derive the following weak form of the energy balance
\begin{align*}
   & \frac{1}{2} \partial_t \Big( ( \rho_f \tilde{\bu}_{f,h}^{\mathrm{S}}, \tilde{\bu}_{f,h}^{\mathrm{S}}  ) + (\rho_s (1-\phi) \tilde{\bu}_{s,h}^{\mathrm{P}}, \tilde{\bu}_{s,h}^{\mathrm{P}})_{\Omega_{\mathrm{P}}}
   + ({(1-\phi)^2}{K}^{-1} \tilde{p}_h^{\mathrm{P}}, \tilde{p}_h^{\mathrm{P}} )_{\Omega_{\mathrm{P}}} \\
   & 
   + (\sqrt{\rho_f \phi} (\tilde{\bu}_{r,h}^{\mathrm{P}} + \tilde{\bu}_{s,h}^{\mathrm{P}}), \sqrt{\rho_f \phi} (\tilde{\bu}_{r,h}^{\mathrm{P}} + \tilde{\bu}_{s,h}^{\mathrm{P}}))_{\Omega_{\mathrm{P}}} 
    + (2 \mu_p \beps(\tilde{\by}_{s,h}^{\mathrm{P}}), \beps(\tilde{\by}_{s,h}^{\mathrm{P}}))_{\Omega_{\mathrm{P}}} \\
    & 
    + (\lambda_p \nabla \cdot \tilde{\by}_{s,h}^{\mathrm{P}}, \nabla \cdot \tilde{\by}_{s,h}^{\mathrm{P}})_{\Omega_{\mathrm{P}}} 
    \Big)  + \left| \tilde{\bu}_{f,h}^{\mathrm{S}} - \partial_t \tilde{\by}_{s,h}^{\mathrm{P}} \right|_{\mathrm{BJS}} ^2   + (2 \mu_f \beps(\tilde{\bu}_{f,h}^{\mathrm{S}}), \beps(\tilde{\bu}_{f,h}^{\mathrm{S}}))_{\Omega_{\mathrm{S}}}   \\ &   
  - \sum_{E \in \mathcal{E}_{\Sigma}} \int_{\Sigma} (1+ \varsigma) (2 \mu_f \beps (\tilde{\bu}_{f,h}^{\mathrm{S}}) \bn_{\mathrm{S}} \cdot \bn_{\mathrm{S}} )  ( \bn_{\mathrm{S}} \cdot \tilde{\bu}_{f,h}^{\mathrm{S}} + \bn_{\mathrm{P}} \cdot \tilde{\bu}_{r,h}^{\mathrm{P}} + \bn_{\mathrm{P}} \cdot d_{\tau} \tilde{\by}_{s,h}^{\mathrm{P}} ) \, \ds   \\ &
   + ({\phi^2}{\kappa}^{-1} \tilde{\bu}_{r,h}^{\mathrm{P}}, \tilde{\bu}_{r,h}^{\mathrm{P}} )_{\Omega_{\mathrm{P}}} + \sum_{E \in \mathcal{E}_{\Sigma}} \int_{\Sigma} \frac{\gamma \mu_f}{h_E} 
    \bigl( \bn_{\mathrm{S}} \cdot \tilde{\bu}_{f,h}^{\mathrm{S}} + \bn_{\mathrm{P}} \cdot \tilde{\bu}_{r,h}^{\mathrm{P}} + \bn_{\mathrm{P}} \cdot d_{\tau} \tilde{\by}_{s,h}^{\mathrm{P}} \bigr)^2 \, \ds \nonumber \\ & 
    + \left| \tilde{\bu}_{r,h}^{\mathrm{P}} \right|_{\mathrm{BJS}} ^2  + (\bw_{f,h}^{\mathrm{S}} \cdot \nabla \bu_{f,h}^{\mathrm{S}}, \bu_{f,h}^{\mathrm{S}})= 0.
\end{align*}
 \cred{  By invoking assumptions~\ref{(H1)}–\ref{(H3)}, applying Lemma~\ref{coercivity-continuity} together with the trace, Young’s, and Cauchy--Schwarz inequalities, employing the estimate
 $$ \rho_f\,\phi\,\bigl\lVert \bu_r^{\mathrm{P}} + \bu_s^{\mathrm{P}}\bigr\rVert_{0,\Omega_{\mathrm{P}}}^2 \geq 
  \rho_f\,\phi\,\Bigl(\tfrac12\bigl\lVert \bu_r^{\mathrm{P}}\bigr\rVert_{0,\Omega_{\mathrm{P}}}^2
  -\;\bigl\lVert \bu_s^{\mathrm{P}}\bigr\rVert_{0,\Omega_{\mathrm{P}}}^2\Bigr),$$ and then integrating in time over $(0,t]$ for arbitrary $t \in (0, T]$}, \cblue{we can assert that}
\begin{align*}
0 &  \cred{ \gtrsim 
  \|\tilde \bu_{f,h}^{\mathrm S}(t)\|_{0,\Omega_{\mathrm S}}^2 +  \|\tilde \bu_{s,h}^{\mathrm P}(t)\|_{0,\Omega_{\mathrm P}}^2
  + \|\tilde \bu_{r,h}^{\mathrm P}(t)\|_{0,\Omega_{\mathrm P}}^2
  + \|\tilde p_h^{\mathrm P}(t)\|_{L^2(\Omega)}^2
  + \|\tilde \by_{s,h}^{\mathrm P}(t)\|_{1,\Omega_{\mathrm P}}^2} \\
  & \quad 
 \cred{ + \int_0^t \bigl|\tilde \bu_{f,h}^{\mathrm S} - \partial_t \tilde \by_{s,h}^{\mathrm P}\bigr|_{\mathrm{BJS}}^2 \,\ds 
 + \int_0^t \bigl|\tilde \bu_{r,h}^{\mathrm P}\bigr|_{\mathrm{BJS}}^2
+ \int_0^t \|\tilde \bu_{r,h}^{\mathrm P}(s)\|_{0,\Omega_{\mathrm P}}^2 \,\ds} \\
& \quad  \cred{  + \sum_{E\in\mathcal E_\Sigma}
  \Bigl(\gamma - \tfrac{1+\varsigma}{\widehat\epsilon_f'}\Bigr)\,
  h_E^{-1}
  \bigl\|\,
    \bn_{\mathrm S}\!\cdot\!\tilde\bu_{f,h}^{\mathrm S}
    + \bn_{\mathrm P}\!\cdot\!\tilde\bu_{r,h}^{\mathrm P}
    + \bn_{\mathrm P}\!\cdot\!d_\tau \tilde \by_{s,h}^{\mathrm P}
  \bigr\|_{0,E}^2}   \cred{  + \frac{\bigl(3 - 2(\varsigma + 1)\,\widehat\epsilon_f' C_{\mathrm{tr}} \bigr)}{4}
  \int_0^t \|\tilde \bu_{f,h}^{\mathrm S}(s)\|_{1,\Omega_{\mathrm S}}^2 \,\ds
}.
\end{align*}
\cblue{Since the right-hand side is non-negative}, this readily gives 
\begin{gather*}
\|\tilde{\bu}_{s,h}^{\mathrm{P}}\|_{L^{\infty}(0, T ; \mathbf{L}^2(\Omega_{\mathrm{P}}))}=\|\tilde{\bu}_{f,h}^{\mathrm{S}}\|_{L^2(0, T ; \mathbf{H}^1(\Omega_{\mathrm{S}}))}=\|\tilde{p}_h^{\mathrm{P}}\|_{L^{\infty}(0, T ; L^2(\Omega_{\mathrm{P}}))}=0, \\ 
\|\tilde{\by}_{s,h}^{\mathrm{P}}\|_{L^{\infty}(0, T ; \mathbf{H}^1(\Omega_{\mathrm{P}}))}=\|\tilde{\bu}_{r,h}^{\mathrm{P}}\|_{L^2(0, T ; \mathbf{L}^2(\Omega_{\mathrm{P}}))} =0.
\end{gather*}
Finally, we use the inf-sup conditions \cblue{from Lemmas~\ref{usual_inf}-\ref{inf_sup}} for $\tilde{p}_h^{\mathrm{S}}, \tilde{p}_h^{\mathrm{P}}$ to obtain $\|(\tilde{p}_{h}^{\mathrm{S}}, \tilde{p}_{h}^{\mathrm{P}})\|_{W_h} \lesssim  0$. This implies $\|\tilde{p}_h^{\mathrm{S}}\|_{L^{\infty}(0, T ; L^2(\Omega_{\mathrm{S}}))} =0$. Hence, the solution of \eqref{mixed-primal} is unique, and equivalently, the fixed-point operator $\mathcal{J}_h$ is well-posed.
\end{proof}

The subsequent theorem establishes the well-posedness of Nitsche’s scheme \eqref{semi_weak_for}. 
\begin{theorem}
For each $\boldsymbol{f}_{\mathrm{S}} \in L^{\infty}(\mathrm{I};L^2(\Omega_{\mathrm{S}})), 
\boldsymbol{f}_{\mathrm{P}} \in L^{\infty}(0,T;L^2(\Omega_{\mathrm{P}})),$  
$\theta \in L^{\infty}(0,T;L^2(\Omega_{\mathrm{P}}))$, and compatible initial data $ ( \bu_{r,h}^{\mathrm{P}}(0), \bu_{s,h}^{\mathrm{P}}(0), \bu_{f,h}^{\mathrm{S}}(0), p^{\mathrm{P}}_h(0), \bsigma^{\mathrm{P}}_{h}(0) ) \in \mathbf{V}_{r,h}  \times \mathbf{V}_{s,h} \times \mathbf{V}_{f,h} \times \mathrm{W}_{p,h} \times \bZ_h$ under   assumptions \ref{(H1)}-\ref{(H3)}. Moreover, assume that  $ \frac{4}{C_{\mathrm{I}}^2} \| \boldsymbol{f}_{\mathrm{S}} \|_{0,\Omega_{\mathrm{S}}} < 1 $, and for any $\widehat{\epsilon}_f^{\prime}, \breve{\epsilon}_f^{\prime}$ that satisfy
$$
\frac{3- 2 (\varsigma+1) \widehat{\epsilon}_f^{\prime} C_{\mathrm{tr}}}{4} >0,
$$
where $\varsigma \in\{-1,0,1\}$ provided that $\gamma> (\varsigma + 1) (\widehat{\epsilon}^{\prime}_f)^{-1}$, there exists a unique solution $(\bu^{\mathrm{S}}_{f,h}, p^{\mathrm{S}}_h,  \bu_{r,h}^{\mathrm{P}}, p^{\mathrm{P}}_h,  \by_{s,h}^{\mathrm{P}},  \bu_{s,h}^{\mathrm{P}})   \in  W^{1,\infty}(0, T ; \mathbf{V}_{f,h}) \times L^{\infty}(0, T ; \mathrm{W}_{f,h}) \times W^{1,\infty}(0, T ; \mathbf{V}_{r,h})$ $\times  W^{1,\infty}(0, T ; \mathrm{W}_{p,h}) \times W^{1,\infty}(0, T ; \mathbf{V}_{s,h}) \times W^{1,\infty}(0, T ; \mathrm{W}_{s,h})$ of the weak formulation \eqref{semi_weak_for}.
\end{theorem}
\begin{proof}
According to the relations provided in \eqref{fixed_map}, we aim to establish the well-posedness of the variational formulation \eqref{semi_weak_for}. To this end, we employ Banach's fixed-point theorem by demonstrating that the operator $\mathcal{J}_h$ admits a unique fixed point in the set $\mathbf{K}_h$. The validity of the assumption 
$\frac{4}{C_{\mathrm{I}} ^2} \left\| \boldsymbol{f}_{\mathrm{S}} \right\|_{0,\Omega_{\mathrm{S}}} < 1$
as shown in \cblue{Theorem}~\ref{E+H}, ensures that $\mathcal{J}_h$ is well-defined.

Let $\bw_{f1,h}^{\mathrm{S}}, \bw_{f2,h}^{\mathrm{S}}, \bu_{f1,h}^{\mathrm{S}}, \bu_{f2,h}^{\mathrm{S}} \in \mathbf{K}_h$ be given, such that $\bu_{f1,h}^{\mathrm{S}} = \mathcal{J}_h(\bw_{f1,h}^{\mathrm{S}})$ and $\bu_{f2,h}^{\mathrm{S}} = \mathcal{J}_h(\bw_{f2,h}^{\mathrm{S}})$. Then, by the definition of $\mathcal{J}_h$, the following identities hold for  $\vec{\bx}_{h1}  =(\bu_{f1,h}^{\mathrm{S}}, {p}_{h1}^{\mathrm{S}},\bu_{r1,h}^{\mathrm{P}}, p_{h1}^{\mathrm{P}}, \by_{s1,h}^{\mathrm{P}},\bu_{s1,h}^{\mathrm{P}})$, $\vec{\bx}_{h2}  =(\bu_{f2,h}^{\mathrm{S}}, {p}_{h2}^{\mathrm{S}},\bu_{r2,h}^{\mathrm{P}}, p_{h2}^{\mathrm{P}}, \by_{s2,h}^{\mathrm{P}},\bu_{s2,h}^{\mathrm{P}}) \in \mathbf{X}_h$:  
    \begin{align*}
  \mathcal{A}_{\bw_{f1,h}^{\mathrm{S}}}\left( \vec{\bx}_{h1},\vec{\by}_{h} \right) &:= \bar{E}\bigl(\partial_t \vec{\bx}_{h1},\vec{\by}_{h}\bigr)
  +\bar{H}\bigl(\vec{\bx}_{h1},\vec{\by}_{h}\bigr) + \boldsymbol{c}(\bw_{f1,h}^{\mathrm{S}}, \bu_{f1,h}^{\mathrm{S}}, \bv_{f,h}^{\mathrm{S}})
  =F(\vec{\by}_{h}), \\
   \mathcal{A}_{\bw_{f2,h}^{\mathrm{S}}}\left( \vec{\bx}_{h2},\vec{\by}_{h} \right) &:= \bar{E}\bigl(\partial_t \vec{\bx}_{h2},\vec{\by}_{h}\bigr)
  +\bar{H}\bigl(\vec{\bx}_{h2},\vec{\by}_{h}\bigr) + \boldsymbol{c}(\bw_{f2,h}^{\mathrm{S}}, \bu_{f2,h}^{\mathrm{S}}, \bv_{f,h}^{\mathrm{S}})
  =F(\vec{\by}_{h}),
    \end{align*}
    for all $\vec{\by}_{h}  =(
\bv_{f,h}^{\mathrm{S}}, {q}_{h}^{\mathrm{S}}, \bv_{r,h}^{\mathrm{P}}, 
q_{h}^{\mathrm{P}}, \bw_{s,h}^{\mathrm{P}},\bv_{s,h}^{\mathrm{P}}) \in \mathbf{X}_h$.
    
  By adding and subtracting appropriate terms, we can derive the following:
  			\begin{align}\label{40}
            \mathcal{A}_{\bw_{f1,h}^{\mathrm{S}}}\left( \vec{\bx}_{h1}-\vec{\bx}_{h2},\vec{\by}_{h} \right) = -\boldsymbol{c}\left(\bw_{f1,h}^{\mathrm{S}}-\bw_{f2,h}^{\mathrm{S}} ; \bu_{f2,h}^{\mathrm{S}}, \bv_{f,h}^{\mathrm{S}} \right) \quad \forall \vec{\by}_{h} \in \mathbf{X}_h.
  			\end{align}
  			Given that $\boldsymbol{w}_{h1} \in \mathbf{K}_h$, and using \eqref{40} along with the coercivity of the bilinear forms established in Lemma~\ref{coercivity-continuity}, we can deduce: 
  			\begin{align*}
  			C_{\alpha} \left\|\bu_{f1,h}^{\mathrm{S}}-\bu_{f2,h}^{\mathrm{S}}\right\|_{1,\Omega_{\mathrm{S}}} & \leq \sup _{ \boldsymbol{0} \neq  \vec{\by}_{h}  \in \mathbf{X}_h} \frac{\mathcal{A}_{\bw_{f1,h}^{\mathrm{S}}}\left( \vec{\bx}_{h1}-\vec{\bx}_{h2},\vec{\by}_{h} \right)}{\|\vec{\by}_{h} \|} \\
            & =\sup _{\boldsymbol{0} \neq \vec{\by}_{h}  \in \mathbf{X}_h} \frac{-\boldsymbol{c}\left(\bw_{f1,h}^{\mathrm{S}}-\bw_{f2,h}^{\mathrm{S}} ; \bu_{f2,h}^{\mathrm{S}}, \bv_{f,h}^{\mathrm{S}} \right)}{\|\vec{\by}_{h} \|} \\
  			& \leq S_f^2 K_f^3 \left\|\bw_{f1,h}^{\mathrm{S}}-\bw_{f2,h}^{\mathrm{S}} \right\|_{1,\Omega_{\mathrm{S}}}\left\|\bu_{f2,h}^{\mathrm{S}}\right\|_{1,\Omega_{\mathrm{S}}},
  			\end{align*}
  			which together with the fact that $\bu_{f2,h}^{\mathrm{S}} \in \mathbf{K}_h$, allows us to assert the bounds 
            \begin{align*}
                C_{\alpha} \left\|\bu_{f1,h}^{\mathrm{S}}-\bu_{f2,h}^{\mathrm{S}}\right\|_{1,\Omega_{\mathrm{S}}} & \leq \frac{1}{C_{\mathrm{I}} } \|\boldsymbol{f}\|_{0,\Omega_{\mathrm{S}}} \left\|\bw_{f1,h}^{\mathrm{S}}-\bw_{f2,h}^{\mathrm{S}} \right\|_{1,\Omega_{\mathrm{S}}}, \\
                \left\|\bu_{f1,h}^{\mathrm{S}}-\bu_{f2,h}^{\mathrm{S}}\right\|_{1,\Omega_{\mathrm{S}}} & \leq \frac{C_{\mathrm{I}}}{4 C_{\alpha} } \left\|\bw_{f1,h}^{\mathrm{S}}-\bw_{f2,h}^{\mathrm{S}} \right\|_{1,\Omega_{\mathrm{S}}}.
            \end{align*}
  			In turn, these steps establish that $\mathcal{J}_h$ is a contraction mapping.
	Hence, $\bu_{f,h}^{\mathrm{S}} \in \mathbf{K}_h$ is the unique fixed point of $\mathcal{J}_h$ and  $\vec{\bx}_{h1} \in W^{1,\infty}(0, T ; \mathbf{V}_{f,h}) \times L^{\infty}(0, T ; \mathrm{W}_{f,h}) \times W^{1,\infty}(0, T ; \mathbf{V}_{r,h}) \times  W^{1,\infty}(0, T ; \mathrm{W}_{p,h}) \times W^{1,\infty}(0, T ; \mathbf{V}_{s,h}) \times W^{1,\infty}(0, T ;\mathrm{W}_{s,h})$ is the unique solution of \eqref{semi_weak_for}. 
\end{proof}

\section{Fully discrete formulation}\label{section5}
For the time discretization we employ the backward Euler method with constant time-step $\tau$, $T = N \tau$, and let $t_n = n\tau$, $1 \leq n \leq N$. Let $d_{\tau} u^n := \tau^{-1}(u^n - u^{n-1})$ be the
first order (backward) discrete time derivative, where $u^n \approx u(t_n)$. 
The fully discrete problem reads:  given $ \bu_{f,h}^0=\bu_{f,h}^{\mathrm{S}}(0)$,  $\bu_{r,h}^0=\bu_{r,h}^{\mathrm{P}}(0)$,  $\by_{s,h}^0=\by_{s,h}^{\mathrm{P}}(0)$,   $p^{\mathrm{P},0}_h={p}_{h}^{\mathrm{P}}(0)$, and  $\bu_{s,h}^0=\bu_{s,h}^{\mathrm{P}}(0)$, find  $\vec{\bx}_h^n \in \vec{\bX}_h$,  such that for $1 \leq n \leq N $, there holds 
    \begin{equation}\label{fully}
        \bar{E}\bigl(\frac{1}{\tau}\vec{\bx}_h^n, \vec{\by}_h \bigr) + \bar{H}(\vec{\bx}_h^n, \vec{\by}_h) + (\bu_{f,h}^{\mathrm{S},n-1} \cdot \bnabla \bu_{f,h}^{\mathrm{S},n}, \bv_{f,h}^{\mathrm{S}}) = F^n(\vec{\by}_h) +  \bar{E}(\frac{1}{\tau}\vec{\bx}_h^{n-1}, \vec{\by}_h) ,
    \end{equation}
for all $\vec{\by}_h\in \vec{\bX}_h$ and where $F^n$ stands for the evaluation of $F$ at time $t_n$.  
\begin{theorem}
Under Assumptions \ref{(H1)}-\ref{(H3)}, and provided that
$\| \bu_{f,h}^{\mathrm{S},n} \|_{1,\Omega_{\mathrm{S}}} < \frac{\mu_f}{2 S_f^2 K_f^3}$ and  $\gamma > \mu_f (1+\varsigma) C_{\mathrm{tr}}^2,$
for $\varsigma \in \{-1,0,1\}$ and $1 \le n \le N$, the fully discrete scheme~\eqref{fully} admits a unique solution.
\end{theorem}
\begin{proof}
\cred{The aim is to show that the following term 
\[
\bar{E}\bigl(\tfrac{1}{\tau}\vec{\bx}_h^n, \vec{\by}_h^n \bigr) 
+ \bar{H}(\vec{\bx}_h^n, \vec{\by}_h^n) 
+ (\bu_{f,h}^{\mathrm{S},n-1} \cdot \bnabla \bu_{f,h}^{\mathrm{S},n}, \bu_{f,h}^{\mathrm{S},n})
\]
is coercive, where 
$\vec{\bx}_h^n =
(
\bu_{f,h}^{\mathrm{S},n},
p_h^{\mathrm{S},n},
\bu_{r,h}^{\mathrm{P},n},
p_h^{\mathrm{P},n},
\by_{s,h}^{\mathrm{P},n},
\bu_{s,h}^{\mathrm{P},n}
)$
and
$\vec{\by}_h^n =
(
\bu_{f,h}^{\mathrm{S},n},
p_h^{\mathrm{S},n},
\bu_{r,h}^{\mathrm{P},n},
p_h^{\mathrm{P},n},
\tfrac{1}{\tau}\by_{s,h}^{\mathrm{P},n},
\bu_{s,h}^{\mathrm{P},n}
)$ denote trial and test functions, respectively. We proceed to choose   $\vec{\by}_h^n = T\vec{\bx}_h^n$, where the operator $T$ scales the $\by_{s,h}^{\mathrm{P},n}$ component by $\frac{1}{\tau}$. Since $T$ is bounded and satisfies the below estimate \eqref{b_estimate}, implies that the associated discrete operator is non-singular, and hence the solution is unique. Let us then consider} 
\begin{align*}
 &\cred{\bar{E}\bigl(\frac{1}{\tau}\vec{\bx}_h^n, \vec{\by}_h \bigr) + \bar{H}(\vec{\bx}_h^n, \vec{\by}_h) + (\bu_{f,h}^{\mathrm{S},n-1} \cdot \bnabla \bu_{f,h}^{\mathrm{S},n}, \bu_{f,h}^{\mathrm{S}})   = a_f^{\mathrm{S}}(\bu^{\mathrm{S},n}_{f,h}, \bu^{\mathrm{S},n}_{f,h})+ \frac{1}{\tau } a_{f}^{\mathrm{P}}(\bu_{r,h}^{\mathrm{P},n}, \by_{s,h}^{\mathrm{P},n})+ \frac{1}{\tau } a_s^{\mathrm{P}}(\by_{s,h}^{\mathrm{P},n}, \by_{s,h}^{\mathrm{P},n}) } \\& \quad \cred{+ a_{f}^{\mathrm{P}}(\bu_{r,h}^{\mathrm{P},n},\bu_{r,h}^{\mathrm{P},n}) + \frac{1}{\tau } a_{f}^{\mathrm{P}}(\by_{s,h}^{\mathrm{P},n}, \bu_{r,h}^{\mathrm{P},n})  + \frac{1}{\tau^2} a_{f}^{\mathrm{P}}(\by_{s,h}^{\mathrm{P},n}, \by_{s,h}^{\mathrm{P},n}) + \frac{1}{\tau^2} a_{\mathrm{BJS}}(\bu^{\mathrm{S},n}_{f,h},  \by_{s,h}^{\mathrm{P},n} ; \bu^{\mathrm{S},n}_{f,h}, \by_{s,h}^{\mathrm{P},n})   - \frac{1}{\tau } m_{\theta}(\bu_{r,h}^{\mathrm{P},n}, \by_{s,h}^{\mathrm{P},n})} \\& \quad  \cred{- \frac{1}{\tau^2} m_{\theta}( \by_{s,h}^{\mathrm{P},n}, \by_{s,h}^{\mathrm{P},n}) - m_{\theta}(\bu_{r,h}^{\mathrm{P},n}, \bu_{r,h}^{\mathrm{P},n}) - \frac{1}{\tau } m_{\theta}( \by_{s,h}^{\mathrm{P},n}, \bu_{r,h}^{\mathrm{P},n})  + m_{\phi^2/\kappa}(\bu_{r,h}^{\mathrm{P},n}, \bu_{r,h}^{\mathrm{P},n}) +\frac{1}{\tau^2} m_{\rho_f \phi}(\bu_{r,h}^{\mathrm{P},n}, \by_{s,h}^{\mathrm{P},n}) } \\&\quad \cred{+ \frac{1}{\tau^2} m_{\rho_p}(\bu_{s,h}^{\mathrm{P},n}, \by_{s,h}^{\mathrm{P},n}) +\frac{1}{\tau } m_{\rho_f \phi }(\bu_{r,h}^{\mathrm{P},n}, \bu_{r,h}^{\mathrm{P},n})  +\frac{1}{\tau } m_{\rho_f }(\bu_{f,h}^{\mathrm{S},n}, \bu_{f,h}^{\mathrm{S},n})   +\frac{1}{\tau } m_{\rho_f \phi}(\bu_{s,h}^{\mathrm{P},n}, \bu_{r,h}^{\mathrm{P},n})  + b_{\mathrm{BJS}}(\bu^{\mathrm{P},n}_{r,h};  \bu_{r,h}^{\mathrm{P},n})} \\&\quad \cred{+ \frac{1}{\tau }
((1-\phi)^2 K^{-1} p^{\mathrm{P},n}_h, p^{\mathrm{P}}_h)_{\Omega_{\mathrm{P}}}  + b_{\Gamma}( \bu_{f,h}^{\mathrm{S},n}, \bu_{r,h}^{\mathrm{P},n}, \frac{1}{\tau} \by_{s,h}^{\mathrm{P},n} ; (1+\varsigma )\bu^{\mathrm{S}}_{f})   + (\bu_{f,h}^{\mathrm{S},n-1} \cdot \bnabla \bu_{f,h}^{\mathrm{S},n}, \bu_{f,h}^{\mathrm{S}}) }\\& \quad \cred{+ c_{\Gamma}\bigl(\bu_{f,h}^{\mathrm{S},n},\bu_{r,h}^{\mathrm{P},n},\frac{1}{\tau} \by_{s,h}^{\mathrm{P},n}
    ;\,\bu_{f,h}^{\mathrm{S},n},\bu_{r,h}^{\mathrm{P},n},\frac{1}{\tau} \by_{s,h}^{\mathrm{P},n}\bigr).}
    \end{align*}
\cred{We bound some of the terms above using the inequality \eqref{ineq:aux}. This step gives }
\begin{align*}
& \cred{\frac{1}{\tau } a_{f}^{\mathrm{P}}(\bu_{r,h}^{\mathrm{P},n}, \by_{s,h}^{\mathrm{P},n})+ \frac{1}{\tau } a_{f}^{\mathrm{P}}(\by_{s,h}^{\mathrm{P},n}, \bu_{r,h}^{\mathrm{P},n}) - \frac{1}{\tau } m_{\theta}(\bu_{r,h}^{\mathrm{P},n}, \by_{s,h}^{\mathrm{P},n}) - \frac{1}{\tau } m_{\theta}( \by_{s,h}^{\mathrm{P},n}, \bu_{r,h}^{\mathrm{P},n}) +\frac{1}{\tau^2} m_{\rho_f \phi}(\bu_{r,h}^{\mathrm{P},n}, \by_{s,h}^{\mathrm{P},n})}\\& \cred{+\frac{1}{\tau } m_{\rho_f \phi}(\bu_{s,h}^{\mathrm{P},n}, \bu_{r,h}^{\mathrm{P},n}) + \frac{1}{\tau^2} m_{\rho_p}(\bu_{s,h}^{\mathrm{P},n}, \by_{s,h}^{\mathrm{P},n}) \geq - \frac{1}{\tau } a_{f}^{\mathrm{P}}\left(\bu_{r,h}^{\mathrm{P},n}, \bu_{r,h}^{\mathrm{P},n}\right) - \frac{1}{\tau } a_{f}^{\mathrm{P}}(\by_{s,h}^{\mathrm{P},n}, \by_{s,h}^{\mathrm{P},n}) + \frac{1}{\tau } m_{\theta}(\bu_{r,h}^{\mathrm{P},n}, \bu_{r,h}^{\mathrm{P},n})} \\ &  \cred{+ \frac{1}{\tau } m_{\theta}(\by_{s,h}^{\mathrm{P},n}, \by_{s,h}^{\mathrm{P},n}) - \frac{1}{\tau} m_{\rho_f \phi}(\bu_{r,h}^{\mathrm{P},n}, \bu_{r,h}^{\mathrm{P},n}) - \frac{1}{\tau} m_{\rho_f \phi}(\bu_{s,h}^{\mathrm{P},n}, \bu_{s,h}^{\mathrm{P},n}) + \frac{1}{\tau } m_{\rho_p}(\bu_{s,h}^{\mathrm{P},n}, \bu_{s,h}^{\mathrm{P},n}). }
\end{align*} 
\cred{By combining both estimates above and using Lemma \ref{coercivity-continuity}, we arrive at} 
\begin{align*}
&  \cred{\mathrm{E}(\frac{1}{\tau}\vec{\bx}_h^n, \vec{\by}_h) + \mathrm{H}(\vec{\bx}_h^n, \vec{\by}_h) + (\bu_{f,h}^{\mathrm{S},n-1} \cdot \bnabla \bu_{f,h}^{\mathrm{S},n}, \bu_{f,h}^{\mathrm{S}}) \geq  a_f^{\mathrm{S}}(\bu^{\mathrm{S},n}_{f,h}, \bu^{\mathrm{S},n}_{f,h})+ \frac{1}{\tau } a_s^{\mathrm{P}}(\by_{s,h}^{\mathrm{P},n}, \by_{s,h}^{\mathrm{P},n})  + b_{\mathrm{BJS}}(\bu^{\mathrm{P},n}_{r,h};  \bu_{r,h}^{\mathrm{P},n})} \\&\quad \cred{+ a_{\mathrm{BJS}}(\bu^{\mathrm{S},n}_{f,h},\frac{1}{\tau}  \by_{s,h}^{\mathrm{P},n} ; \bu^{\mathrm{S},n}_{f,h}, \frac{1}{\tau} \by_{s,h}^{\mathrm{P},n}) + m_{\phi^2/\kappa}(\bu_{r,h}^{\mathrm{P},n}, \bu_{r,h}^{\mathrm{P},n}) + \frac{1}{\tau }
((1-\phi)^2 K^{-1} p^{\mathrm{P},n}_h, p^{\mathrm{P}}_h)_{\Omega_{\mathrm{P}}} + (\bu_{f,h}^{\mathrm{S},n-1} \cdot \bnabla \bu_{f,h}^{\mathrm{S},n}, \bu_{f,h}^{\mathrm{S}})} \\ & \quad \cred{+ b_{\Gamma}( \bu_{f,h}^{\mathrm{S},n}, \bu_{r,h}^{\mathrm{P},n}, \frac{1}{\tau} \by_{s,h}^{\mathrm{P},n} ; (1+\varsigma )\bu^{\mathrm{S}}_{f}) + c_{\Gamma}\bigl(\bu_{f,h}^{\mathrm{S},n},\bu_{r,h}^{\mathrm{P},n},\frac{1}{\tau} \by_{s,h}^{\mathrm{P},n} ;\,\bu_{f,h}^{\mathrm{S},n},\bu_{r,h}^{\mathrm{P},n},\frac{1}{\tau} \by_{s,h}^{\mathrm{P},n}\bigr)  \geq 2 \mu_f  \| \varepsilon(\bu_{f,h}^{\mathrm{S},n})\|_{0,\Omega_{\mathrm{S}}}^2} \\ & \quad  \cred{+ \mu_f \alpha_{\mathrm{BJS}} Z_{\mathrm{max}}^{-1/2}(
|\bu_{f,h}^{\mathrm{S},n} - \bw_{s,h}^{\mathrm{P},h}|_{\mathrm{BJS}}^2 + |\bu_{r,h}^{\mathrm{P},n}|_{\mathrm{BJS}}^2
) + \frac{\phi^2 C_P^2}{\kappa}  \|\bu_{r,h}^{\mathrm{P},n}\|_{1,\Omega_{\mathrm{P}}}^2 + 2 \mu_p \| \varepsilon (\by_{s,h}^{\mathrm{P},n}) \|_{0,\Omega_{\mathrm{P}}} + \lambda_p \| \nabla \cdot \by_{s,h}^{\mathrm{P},n} \|_{0, \Omega_{\mathrm{P}}} } \\& \quad
\cred{- \delta_1 C_{\mathrm{tr}}^2 \mu_f \| \varepsilon(\bu_{f,h}^{\mathrm{S},n})\|_{0,\Omega_{\mathrm{S}}}^2 - \frac{\mu_f}{\delta_1} (1+\varsigma )\sum_{E \in \mathcal{E}_{\Sigma}} \frac{1}{h_E}  ( \|\boldsymbol{u}_{f,h}^{\mathrm{S},n} \cdot \bn_{\mathrm{S}} + \boldsymbol{u}_{r,h}^{\mathrm{P},n} \cdot \bn_{\mathrm{P}}   + \frac{1}{\tau} \boldsymbol{y}_{s,h}^{\mathrm{P},n} \cdot \bn_{\mathrm{P}}\|_{0, E}^2 ) + \frac{(1-\phi)^2K^{-1}}{\tau} \|p_h^{\mathrm{P},n}\|_{0,\Omega_{\mathrm{P}}} } \\& \quad \cred{+ \sum_{E \in \mathcal{E}_{\Sigma}} \frac{\gamma }{h_E} \| \bu_{f,h}^{\mathrm{S},n} \cdot \bn_{\mathrm{S}}+ \bu_{r,h}^{\mathrm{P},n} \cdot \bn_{\mathrm{P}} + \frac{1}{\tau} \by_{s,h}^{\mathrm{P},n} \cdot \bn_{\mathrm{P}} \|_{0,E}^2 - S_f^2 K_f^3 \|\bu_{f,h}^{\mathrm{S},{n-1}}\|_{1, \Omega_{\mathrm{S}}} \, \| \varepsilon (\bu_{f,h}^{\mathrm{S},n}) \|_{0, \Omega_{\mathrm{S}}}^2  .}
\end{align*}
\cred{Then, choosing $\delta_1 = \frac{1}{C_{\mathrm{tr}}^2}$, we can assert that}
\begin{align}\label{b_estimate}
    & \cred{ \mathrm{E}(\frac{1}{\tau}\vec{\bx}_h^n, \vec{\by}_h) + \mathrm{H}(\vec{\bx}_h^n, \vec{\by}_h) + (\bu_{f,h}^{\mathrm{S},n-1} \cdot \bnabla \bu_{f,h}^{\mathrm{S},n}, \bu_{f,h}^{\mathrm{S}}) \geq \frac{\mu_f}{2}  \| \varepsilon(\bu_{f,h}^{\mathrm{S},n})\|_{0,\Omega_{\mathrm{S}}}^2  + \mu_f \alpha_{\mathrm{BJS}} Z_{\mathrm{max}}^{-1/2}(
|\bu_{f,h}^{\mathrm{S},n} - \bw_{s,h}^{\mathrm{P},h}|_{\mathrm{BJS}}^2 + |\bu_{r,h}^{\mathrm{P},n}|_{\mathrm{BJS}}^2
)} \nonumber \\ & \quad  \cred{+ \frac{\phi^2 C_P^2}{\kappa}  \|\bu_{r,h}^{\mathrm{P},n}\|_{1,\Omega_{\mathrm{P}}}^2  + (\gamma -\mu_f C_{\mathrm{tr}}^2 (1+\varsigma )) \sum_{E \in \mathcal{E}_{\Sigma}} \frac{1}{h_E}  ( \|\boldsymbol{u}_{f,h}^{\mathrm{S},n} \cdot \bn_{\mathrm{S}} + \boldsymbol{u}_{r,h}^{\mathrm{P},n} \cdot \bn_{\mathrm{P}}   + \frac{1}{\tau} \boldsymbol{y}_{s,h}^{\mathrm{P},n} \cdot \bn_{\mathrm{P}}\|_{0, E}^2 )  + 2 \mu_p \| \varepsilon (\by_{s,h}^{\mathrm{P},n}) \|_{0,\Omega_{\mathrm{P}}}} \nonumber  \\& \quad \cred{+ \lambda_p \| \nabla \cdot \by_{s,h}^{\mathrm{P},n} \|_{0, \Omega_{\mathrm{P}}} + \frac{(1-\phi)^2K^{-1}}{\tau} \|p_h^{\mathrm{P},n}\|_{0,\Omega_{\mathrm{P}}} .}
\end{align} 
\cred{It is clear that all terms on the right-hand side are positive. Hence, the bilinear form on the left-hand side is positive-definite, and consequently the matrix obtained from the system \eqref{fully} is non-singular. The uniqueness follows from the fact that a linear system with a non-singular matrix admits a unique solution.}
\end{proof}
\subsection{Stability analysis of the fully discrete scheme} 
In the following lemma, we discuss the stability analysis of fully discrete problem.
We will make use of the discrete space–time
norms, where $I=(0,T)$
$$
\| \phi\|_{l^2(I;X)}^2 := \tau \sum_{n=1}^N \| \phi^n \|_X^2,~ \| \phi\|_{l^{\infty}(I;X)}^2 := \max_{0 \leq n \leq N}\| \phi^n \|_X^2,~ \left| \bphi\right|_{l^2(I;{\mathrm{BJS}})} = \tau \sum_{n=1}^N \left| \bphi\right|^2_{\mathrm{BJS}}.
$$ 

\begin{lemma}\label{stability_fully}
Under Assumptions~\ref{(H1)}-\ref{(H3)} and 
$\| \bu_{f,h}^{\mathrm{S},n} \|_{1,\Omega_{\mathrm{S}}} < \frac{\mu_f}{2 S_f^2 K_f^3}, \ 1 \leq n \leq N,$  for any $\widehat{\epsilon}_f^{\prime}, \breve{\epsilon}_f^{\prime}$ such that  
$$\frac14\bigl( 3 - 2(1 + \varsigma)\widehat{\epsilon_f}^{\prime} C_{\mathrm{tr}} - \breve{\epsilon}_f^{\prime} \bigr)>0,$$
where $\varsigma \in\{-1,0,1\}$ provided that $\gamma> (\varsigma + 1) (\widehat{\epsilon}^{\prime}_f)^{-1}$, there exist constants $0<c<1$ and $C>1$, uniformly independent of the mesh  size $h$, such that
\begin{align*}
    & \|\bu_{f,h}^{\mathrm{S},N}\|_{0,\Omega_{\mathrm{S}}}^2 + \|\bu_{r,h}^{\mathrm{P},N}\|_{0,\Omega_{\mathrm{P}}}^2 + \|\by_{s,h}^{\mathrm{P},N} \|_{1,\Omega_{\mathrm{P}}}^2 + \|p^{\mathrm{P},N}_h\|_{0,\Omega_{\mathrm{P}}}^2  +  \|\bu_{s,h}^{\mathrm{P},N}\|_{0,\Omega_{\mathrm{P}}}^2 + \left| \bu^{\mathrm{P},n}_{r,h} \right|_{\mathrm{BJS}}^2   \\&\quad   + \tau \sum_{n=1}^N ( \left| \bu^{\mathrm{S},n}_{f,h} - d_{\tau} \by_{s,h}^{\mathrm{P},n} \right|_{\mathrm{BJS}}^2   + \|\bu_{r,h}^{\mathrm{P},n}\|_{0,\Omega_{\mathrm{P}}}^2 + \|p_h^{\mathrm{S},n}\|_{0,\Omega_{\mathrm{S}}}^2 + \|p_h^{\mathrm{P},n}\|_{0,\Omega_{\mathrm{P}}}^2 )  \\&\quad  + \tau^2 \sum_{n=1}^N(  \|d_{\tau} \bu_{f,h}^{\mathrm{S},n}\|_{0,\Omega_{\mathrm{S}}}^2 
     + \|d_{\tau} \bu_{r,h}^{\mathrm{P},n}\|_{0,\Omega_{\mathrm{P}}}^2   + \|d_{\tau} \by_{s,h}^{\mathrm{P},n}\|_{1,\Omega_{\mathrm{P}}}^2 
     + \|d_{\tau} p^{\mathrm{P},n}_h\|_{0,\Omega_{\mathrm{P}}}^2
    \|d_{\tau} \bu_{s,h}^{\mathrm{P},n}\|_{0,\Omega_{\mathrm{P}}}^2 )  \\ & \quad + c \tau \sum_{n=1}^N \bigl(  \|\bu^{\mathrm{S},n}_{f,h}\|_{1,\Omega_{\mathrm{S}}}^2  + \sum_{E \in \mathcal{E}_{\Sigma}}  h_E^{-1} \|\bn_{\mathrm{S}} \cdot \bu_{f,h}^{\mathrm{S},n}  + \bn_{\mathrm{P}} \cdot \bu_{r,h}^{\mathrm{P},n}  + \bn_{\mathrm{P}} \cdot d_{\tau} \by_{s,h}^{\mathrm{P},n}\|_{0,E}^2 \bigr)\\
    & \lesssim \exp(T) \bigg[ \|\bu_{f,h}^{0}\|_{0,\Omega_{\mathrm{S}}}^2  + \|\bu_{r,h}^{0}\|_{0,\Omega_{\mathrm{P}}}^2  + \|\by_{s,h}^{0}\|_{1,\Omega_{\mathrm{P}}}^2 + \|p^{\mathrm{P},0}_h\|_{0,\Omega_{\mathrm{P}}}^2
      + \|\bu_{s,h}^{0}\|_{0,\Omega_{\mathrm{P}}}^2    +\left\|\ff_{\mathrm{P}}(0)\right\|_{0,\Omega_{\mathrm{P}}} \\& \quad  +  
     C \tau  \sum_{n=1}^N  \|\boldsymbol{f}_{\mathrm{S}}(t_n)\|_{0,\Omega_{\mathrm{S}}}^2   + \epsilon_1^{-1} \tau  \sum_{n=1}^N  \bigl( \|\ff_{\mathrm{P}}(t_n)\|_{0,\Omega_{\mathrm{P}}}^2 + \|\theta(t_n)\|_{0,\Omega_{\mathrm{P}}}^2  + \|r_{\mathrm{S}}(t_n)\|_{0,\Omega_{\mathrm{S}}}^2 \bigr)   +\tau \sum_{n=1}^{N}\left\|d_\tau \ff_{\mathrm{P}}^n\right\|_{0,\Omega_{\mathrm{P}}}^2 \bigg].
\end{align*} More precisely, we have
$$
c<\min \left\{ \frac 14\bigl( 3 - 2(1 + \varsigma)\widehat{\epsilon_f}^{\prime} C_{\mathrm{tr}} - \breve{\epsilon}_f^{\prime} \bigr) , \gamma - (\varsigma + 1) (\widehat{\epsilon}^{\prime}_f)^{-1} \right\}, \qquad 
 C>( \breve{\epsilon}_f^{\prime})^{-1} .
$$
\end{lemma}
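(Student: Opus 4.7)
The plan is to mimic, at the discrete level, the energy identity already exploited in the uniqueness part of Theorem~\ref{E+H}, but now with nonzero data and with the backward-Euler identity $d_\tau u^n \cdot u^n = \tfrac12 d_\tau |u^n|^2 + \tfrac{\tau}{2} |d_\tau u^n|^2$ in place of the time derivative. Concretely, I would test \eqref{fully} with
$\vec{\by}_h = (\bu_{f,h}^{\mathrm{S},n}, p_h^{\mathrm{S},n}, \bu_{r,h}^{\mathrm{P},n}, p_h^{\mathrm{P},n}, d_\tau \by_{s,h}^{\mathrm{P},n}, \bu_{s,h}^{\mathrm{P},n})$. This choice makes the velocity-pressure coupling, the interface mass-balance terms and the Nitsche consistency terms pair up as in the uniqueness proof, leaving on the left-hand side the coercive quadratic forms (the mass terms producing $\tfrac12 d_\tau\|\cdot\|_0^2 + \tfrac{\tau}{2}\|d_\tau\cdot\|_0^2$ contributions), the viscous bulk form $a_f^{\mathrm{S}}$, the generalized poroelastic dissipation $a_f^{\mathrm{P}}$ and $a_s^{\mathrm{P}}$, the Darcy-like term $m_{\phi^2/\kappa}$, the BJS dissipation $|\bu_{f,h}^{\mathrm{S},n} - d_\tau \by_{s,h}^{\mathrm{P},n}|_{\mathrm{BJS}}^2 + |\bu_{r,h}^{\mathrm{P},n}|_{\mathrm{BJS}}^2$, the Nitsche penalty $c_\Gamma$, the skew-parametrised interface boundary term with coefficient $1+\varsigma$, and the trilinear form; on the right-hand side we get $F^n(\vec{\by}_h)$.

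Next, I would absorb the two $(1+\varsigma)$-weighted interface residuals using \eqref{I1} and \eqref{I2} exactly as in Lemma~\ref{coercivity-continuity}. The strain part consumes $(1+\varsigma)\widehat{\epsilon}_f' C_{\mathrm{tr}}$ of the coercivity of $a_f^{\mathrm{S}}$ and $(1+\varsigma)(\widehat{\epsilon}_f')^{-1}$ of the Nitsche penalty; the pressure part consumes $\breve{\epsilon}_f'$ of the viscous coefficient and $(\breve{\epsilon}_f')^{-1}$ of the $L^2(\Omega_{\mathrm{S}})$ pressure norm (this is where the term $C > (\breve{\epsilon}_f')^{-1}$ in the statement comes from). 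The conditions $\tfrac14(3 - 2(1+\varsigma)\widehat{\epsilon}_f' C_{\mathrm{tr}} - \breve{\epsilon}_f') > 0$ and $\gamma > (\varsigma+1)(\widehat{\epsilon}_f')^{-1}$ then leave a strictly positive residue in both the bulk viscous norm of $\bu_{f,h}^{\mathrm{S},n}$ and the Nitsche penalty term, which is the coefficient $c$ appearing in the statement. The convective term $(\bu_{f,h}^{\mathrm{S},n-1} \cdot \bnabla \bu_{f,h}^{\mathrm{S},n}, \bu_{f,h}^{\mathrm{S},n})$ is estimated, as in Lemma~\ref{coercivity-continuity}, by $S_f^2 K_f^3 \|\bu_{f,h}^{\mathrm{S},n-1}\|_{1,\Omega_{\mathrm{S}}} \|\bu_{f,h}^{\mathrm{S},n}\|_{1,\Omega_{\mathrm{S}}}^2$, so that the hypothesis $\|\bu_{f,h}^{\mathrm{S},n}\|_{1,\Omega_{\mathrm{S}}} < \mu_f/(2 S_f^2 K_f^3)$ absorbs at most half of the remaining viscous coefficient; together with the bound $\rho_f \phi \|\bu_r^{\mathrm{P}} + \bu_s^{\mathrm{P}}\|_{0,\Omega_{\mathrm{P}}}^2 \geq \rho_f \phi (\tfrac12 \|\bu_r^{\mathrm{P}}\|_{0,\Omega_{\mathrm{P}}}^2 - \|\bu_s^{\mathrm{P}}\|_{0,\Omega_{\mathrm{P}}}^2)$ flagged in the statement, all left-hand side terms become of definite sign.

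The right-hand side contributions $(\ff_{\mathrm{S}}^n, \bu_{f,h}^{\mathrm{S},n})$, $(\rho_f \phi \ff_{\mathrm{P}}^n, \bu_{r,h}^{\mathrm{P},n})$, $(\rho_p \ff_{\mathrm{P}}^n, d_\tau \by_{s,h}^{\mathrm{P},n})$, $(r_{\mathrm{S}}^n, p_h^{\mathrm{S},n})$ and $(\rho_f^{-1}\theta^n, p_h^{\mathrm{P},n})$ are split by Cauchy--Schwarz and Young with parameter $\epsilon_1$; the velocity parts use Korn and Poincaré and are absorbed by the positive viscous/BJS/Darcy coefficients obtained in the previous step, while the solid-velocity pairing with $d_\tau \by_{s,h}^{\mathrm{P},n}$ is handled by an Abel-type summation by parts producing the $\tau\sum_n \|d_\tau \ff_{\mathrm{P}}^n\|_{0,\Omega_{\mathrm{P}}}^2$ term and a boundary contribution $\|\ff_{\mathrm{P}}(0)\|_{0,\Omega_{\mathrm{P}}}$. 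Summing the resulting discrete identity from $n=1$ to $N$ telescopes the $d_\tau \|\cdot\|_0^2$ terms into the terminal minus initial norms, and yields quadratic $\tau \sum_n \|\cdot\|^2$ contributions on both sides; the one involving $\|\bu_{f,h}^{\mathrm{S},n}\|_{1,\Omega_{\mathrm{S}}}^2$ (coming from the $(\ff_{\mathrm{S}}^n, \bu_{f,h}^{\mathrm{S},n})$ control) and those involving the other state variables are handled by the discrete Grönwall lemma, producing the $\exp(T)$ prefactor.

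Finally, the pressure norms $\|p_h^{\mathrm{S},n}\|_{0,\Omega_{\mathrm{S}}}$ and $\|p_h^{\mathrm{P},n}\|_{0,\Omega_{\mathrm{P}}}$ are recovered by invoking the inf-sup condition of Lemma~\ref{inf_sup} at each time level $t_n$: solving the momentum block of \eqref{fully} for the combined pressure-trace map $\mathbf{B}^T(p_h^{\mathrm{S},n}, p_h^{\mathrm{P},n})$ and applying the boundedness estimates of Lemmas~\ref{coercivity-continuity}--\ref{continuity-b} expresses the pressures in terms of the bounded velocity and displacement norms and the data. The main technical obstacle I anticipate is the simultaneous delicate balancing of the viscous coefficient against the Nitsche consistency, pressure trace, \emph{and} convective contributions, since no single one of $\widehat{\epsilon}_f'$, $\breve{\epsilon}_f'$ and the upper bound on $\|\bu_{f,h}^{\mathrm{S},n}\|_{1,\Omega_{\mathrm{S}}}$ can take an arbitrary value: the definitions of $c$ and $C$ in the statement are precisely what survives from this balancing; secondary care is needed to handle the $(1+\varsigma)$ interface factor uniformly in $\varsigma \in \{-1, 0, 1\}$ so that the symmetric ($\varsigma = 1$), incomplete ($\varsigma = 0$) and skew ($\varsigma = -1$) Nitsche variants are treated in a single argument.
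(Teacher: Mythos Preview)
Your proposal is correct and follows essentially the same route as the paper: the same test-function choice $(\bu_{f,h}^{\mathrm{S},n}, p_h^{\mathrm{S},n}, \bu_{r,h}^{\mathrm{P},n}, p_h^{\mathrm{P},n}, d_\tau \by_{s,h}^{\mathrm{P},n}, \bu_{s,h}^{\mathrm{P},n})$, the backward-Euler identity, absorption of the $(1+\varsigma)$ interface term via \eqref{I1}, summation by parts for $(\rho_p \ff_{\mathrm{P}}^n, d_\tau \by_{s,h}^{\mathrm{P},n})$, the inf-sup of Lemma~\ref{inf_sup} for the pressures, and the discrete Gr\"onwall lemma.

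One small correction: after testing with $q_h^{\mathrm{S}} = p_h^{\mathrm{S},n}$ the two pressure contributions in $b_\Gamma$ cancel exactly by the skew structure of the formulation, so only the $(1+\varsigma)$-weighted \emph{strain} interface residual survives and is handled by \eqref{I1}; the parameter $\breve{\epsilon}_f'$ does not come from a pressure trace via \eqref{I2} but from the Young inequality applied to $(\ff_{\mathrm{S}}(t_n), \bu_{f,h}^{\mathrm{S},n})$, which is why $C > (\breve{\epsilon}_f')^{-1}$ multiplies $\|\ff_{\mathrm{S}}(t_n)\|_{0,\Omega_{\mathrm{S}}}^2$ on the right-hand side of the statement.
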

 \begin{proof}
         We choose $ (\bv_{f,h}^{\mathrm{S}}, q^{\mathrm{S}}_h, \bv_{r,h}^{\mathrm{P}}, q^{\mathrm{P}}_h, \bw_{s,h}^{\mathrm{P}}) = (\bu^{\mathrm{S},n}_{f,h}, p^{\mathrm{S},n}_h, \bu_{r,h}^{\mathrm{P},n}, p^{\mathrm{P},n}_h, d_{\tau} \by_{s,h}^{\mathrm{P},n})$ in \eqref{fully}, apply \eqref{ineq:aux}, and use the following identity 
    $\int_{\Omega_{\mathrm{P}}} {\varsigma}^n d_{\tau} \varsigma^n = \frac{1}{2} d_{\tau} \| \varsigma^n \|^2_{0,\Omega_{\mathrm{P}}} + \frac{1}{2} \tau \| d_{\tau} \varsigma^n \|^2_{0,\Omega_{\mathrm{P}}}.$
With this we readily  obtain the energy inequality 
\begin{align*}
&  \frac{d_{\tau}}{2}  \bigl( \rho_f \| \bu_{f,h}^{\mathrm{S},n}\|_{0,\Omega_{\mathrm{S}}}^2  + \rho_s (1-\phi) \| \bu_{s,h}^{\mathrm{P},n}\|_{0,\Omega_{\mathrm{P}}}^2 + {(1-\phi)^2}{K}^{-1} \|p^{\mathrm{P},n}_h\|^2_{0,\Omega_{\mathrm{P}}} +  2 \mu_p \|\beps(\by_{s,h}^{\mathrm{P},n})\|^2_{0,\Omega_{\mathrm{P}}} + \lambda_p \|\nabla \cdot \by_{s,h}^{\mathrm{P},n}\|^2_{0,\Omega_{\mathrm{P}}} \\ &   \quad  +\rho_f \phi \|(\bu_{s,h}^{\mathrm{P},n}+\bu_{r,h}^{\mathrm{P},n})\|^2_{0,\Omega_{\mathrm{P}}} \bigr)  + \frac{\tau}{2} \bigl( \rho_f \|d_{\tau}  \bu_{f,h}^{\mathrm{S},n}\|_{0,\Omega_{\mathrm{S}}}^2  + \rho_s (1-\phi)   \| d_{\tau}\bu_{s,h}^{\mathrm{P},n}\|_{0,\Omega_{\mathrm{P}}}^2  +(1-\phi)^2{K}^{-1} \|d_{\tau} p^{\mathrm{P},n}_h\|_{0,\Omega_{\mathrm{P}}}^2 
 \\
& \quad  +   2 \mu_p\| d_{\tau} \beps( \by_{s,h}^{\mathrm{P},n})\|_{0,\Omega_{\mathrm{P}}}^2    + \lambda_p \|d_{\tau} \nabla \cdot \by_{s,h}^{\mathrm{P},n}\|_{0,\Omega_{\mathrm{P}}}^2   + \rho_f \phi \|d_{\tau} (\bu_{s,h}^{\mathrm{P},n}+\bu_{r,h}^{\mathrm{P},n} )\|_{0,\Omega_{\mathrm{P}}}^2 \bigr)  \\ &\quad   + \left| \bu^{\mathrm{S},n}_{f,h} - d_{\tau} \by_{s,h}^{\mathrm{P},n}\right|_{\mathrm{BJS}}^2    + \left| \bu^{\mathrm{P},n}_{r,h} \right|_{\mathrm{BJS}}^2  + ({\phi^2}{\kappa}^{-1} \bu_{r,h}^{\mathrm{P},n},\bu_{r,h}^{\mathrm{P},n} )_{\Omega_{\mathrm{P}}}    +   (2 \mu_f  \beps(\bu^{\mathrm{S},n}_{f,h}), \beps(\bu^{\mathrm{S},n}_{f,h}) )_{\Omega_{\mathrm{S}}}  \\& \quad  - \sum_{E \in \mathcal{E}_{\Sigma}} \int_{\Sigma} (1+ \varsigma) (2 \mu_f \beps (\bu_{f,h}^{\mathrm{S},n}) \bn_{\mathrm{S}} \cdot \bn_{\mathrm{S}} ) ( \bn_{\mathrm{S}} \cdot \bu_{f,h}^{\mathrm{S},n}  + \bn_{\mathrm{P}} \cdot \bu_{r,h}^{\mathrm{P},n}  + \bn_{\mathrm{P}} \cdot d_{\tau} \by_{s,h}^{\mathrm{P},n} ) \ds \\ & \quad 
 +  \sum_{E \in \mathcal{E}_{\Sigma}} \int_{\Sigma} \frac{\gamma \mu_f}{h_E} ( \bn_{\mathrm{S}} \cdot \bu_{f,h}^{\mathrm{S},n} + \bn_{\mathrm{P}} \cdot \bu_{r,h}^{\mathrm{P},n} + \bn_{\mathrm{P}} \cdot d_{\tau} \by_{s,h}^{\mathrm{P},n} )^2  \ds   + (\bu_{f,h}^{\mathrm{S},n-1} \cdot \bnabla \bu_{f,h}^{\mathrm{S},n}, \bu_{f,h}^{\mathrm{S}})_{\Omega_{\mathrm{S}}} \\
 & \leq  (\ff_{\mathrm{S}}(t_n), \bu^{\mathrm{S},n}_{f,h})_{\Omega_{\mathrm{S}}}     + ( \rho_p \ff_{\mathrm{P}}(t_n), d_{\tau} \by_{s,h}^{\mathrm{P},n} )_{\Omega_{\mathrm{P}}}    + ( r_{\mathrm{S}}(t_n), p^{\mathrm{S},n}_h)_{\Omega_{\mathrm{S}}}   + (\rho_f \phi  \ff_{\mathrm{P}}(t_n), \bu_{r,h}^{\mathrm{P},n} )_{\Omega_{\mathrm{P}}}    + (\rho_f^{-1} \theta(t_n), p^{\mathrm{P},n}_h )_{\Omega_{\mathrm{P}}}. 
\end{align*} 
Using the trace, Cauchy--Schwarz, Young’s, Korn's and Sobolev inequalities, 
together with Lemma~\ref{coercivity-continuity} and the estimate
 $ \rho_f \,\phi\,\| \bu_r^{\mathrm{P}} + \bu_s^{\mathrm{P}}\|_{0,\Omega_{\mathrm{P}}}^2
\geq   \rho_f \,\phi\,\Bigl(\tfrac12\|\bu_r^{\mathrm{P}}\|_{0,\Omega_{\mathrm{P}}}^2
  - \|\bu_s^{\mathrm{P}}\|_{0,\Omega_{\mathrm{P}}}^2\Bigr),
  $ we then sum over \(n=1,\ldots,N\) and multiply by \(\tau\) to obtain
\begin{align}\label{stability_1}
    & \|\bu_{f,h}^{\mathrm{S},N}\|_{0,\Omega_{\mathrm{P}}}^2 + \|\bu_{r,h}^{\mathrm{P},N}\|_{0,\Omega_{\mathrm{P}}}^2 + \|\by_{s,h}^{\mathrm{P},N} \|_{1,\Omega_{\mathrm{P}}}^2  + \|p^{\mathrm{P},N}_h\|_{0,\Omega_{\mathrm{P}}}^2  + \|\bu_{s,h}^{\mathrm{P},N}\|_{0,\Omega_{\mathrm{P}}}^2 \nonumber \\ 
    & \quad + \tau \sum_{n=1}^N \bigl( \left| \bu^{\mathrm{P},n}_{r,h}  \right|_{\mathrm{BJS}}^2       + \left| \bu^{\mathrm{S},n}_{f,h} - d_{\tau} \by_{s,h}^{\mathrm{P},n} \right|_{\mathrm{BJS}}^2     + \|\bu_{r,h}^{\mathrm{P},n}\|_{0,\Omega_{\mathrm{P}}}^2 \bigr)+ \tau \sum_{n=1}^N \frac{\bigl( 3 - 2(1 + \varsigma)\widehat{\epsilon_f}^{\prime} C_{\mathrm{tr}} - \breve{\epsilon}_f^{\prime} \bigr)}{4}  \|\bu^{\mathrm{S},n}_{f,h}\|_{1,\Omega_{\mathrm{S}}}^2   \nonumber  \\ 
    & \quad + \tau^2 \sum_{n=1}^N \bigl(\|d_{\tau} \bu_{f,h}^{\mathrm{S},n}\|_{0,\Omega_{\mathrm{S}}}^2  + \|d_{\tau} \bu_{r,h}^{\mathrm{P},n}\|_{0,\Omega_{\mathrm{P}}}^2   + \|d_{\tau} \by_{s,h}^{\mathrm{P},n}\|_{1,\Omega_{\mathrm{P}}}^2  + \|d_{\tau} p^{\mathrm{P},n}_h\|_{0,\Omega_{\mathrm{P}}}^2  + \|d_{\tau} \bu_{s,h}^{\mathrm{P},n}\|_{0,\Omega_{\mathrm{P}}}^2 \bigr) \nonumber \\ 
    & \quad + \tau \sum_{n=1}^N \sum_{E \in \mathcal{E}_{\Sigma}}  \bigl( \gamma - \frac{1 + \varsigma}{\widehat{\epsilon_f}^{\prime}}\bigr) h_E^{-1} \|\bn_{\mathrm{S}} \cdot \bu_{f,h}^{\mathrm{S},n}    + \bn_{\mathrm{P}} \cdot \bu_{r,h}^{\mathrm{P},n} + \bn_{\mathrm{P}} \cdot d_{\tau} \by_{s,h}^{\mathrm{P},n}\|_{0,E}^2  \nonumber \\
    & \lesssim      \|\bu_{f,h}^{0}\|_{0,\Omega_{\mathrm{S}}}^2   + \|\bu_{r,h}^{0}\|_{0,\Omega_{\mathrm{P}}}^2  + \|\by_{s,h}^{0}\|_{1,\Omega_{\mathrm{P}}}^2  + \|p^{\mathrm{P},0}_h\|_{0,\Omega_{\mathrm{P}}}^2 + \|\bu_{s,h}^{0}\|_{0,\Omega_{\mathrm{P}}}^2 
     + (\breve{\epsilon}_f^{\prime})^{-1} \tau \sum_{n=1}^N  \|\boldsymbol{f}_{\mathrm{S}}(t_n)\|_{0,\Omega_{\mathrm{S}}}^2  \nonumber   \\ 
     & \quad + \epsilon_1^{-1} \tau \sum_{n=1}^N  ( \|\ff_{\mathrm{P}}(t_n)\|_{0,\Omega_{\mathrm{P}}}^2  + \|\theta(t_n)\|_{0,\Omega_{\mathrm{P}}}^2  + \|r_{\mathrm{S}}(t_n)\|_{0,\Omega_{\mathrm{S}}}^2 ) 
     + \tau \sum_{n=1}^N  (\rho_p \ff_{\mathrm{P}}(t_n), d_{\tau} \by_{s,h}^{\mathrm{P},n})_{\Omega_{\mathrm{P}}} \nonumber \\
    & \quad  + \epsilon_1 \tau \sum_{n=1}^N  ( \|p^{\mathrm{P},n}_h\|_{0,\Omega_{\mathrm{P}}}^2    + \|p^{\mathrm{S},n}_h\|_{L^2(\Omega_{\mathrm{S}})}^2  + \|\bu_{r,h}^{\mathrm{P},n}\|_{0,\Omega_{\mathrm{P}}}^2 ) .
\end{align}
Next, to bound the second-last term on the right-hand side of \eqref{stability_1}, we use summation 
by parts as follows 
\begin{align}\label{stability_2}
& \tau \sum_{n=1}^N(\ff_{\mathrm{P}}(t_n), d_\tau \by_{s,h}^{\mathrm{P},n})=(\ff_{\mathrm{P}}(t_N), \by_{s,h}^{\mathrm{P},N})-(\ff_{\mathrm{P}}(0), \by_{s,h}^0)-\tau \sum_{n=1}^{N-1}(d_\tau \ff_{\mathrm{P}}^n, \by_{s,h}^{\mathrm{P},n})  \\&  \nonumber  \leq \frac{\epsilon_1}{2}\|\by_{s,h}^{\mathrm{P},N}\|_{0,\Omega_{\mathrm{P}}}^2   +\frac{1}{2 \epsilon_1}\|\ff_{\mathrm{P}}(t_N)\|_{0,\Omega_{\mathrm{P}}}^2 +\frac{\tau}{2} \sum_{n=1}^{N-1}\|\by_{s,h}^{\mathrm{P},n}\|_{0,\Omega_{\mathrm{P}}}^2 +\frac{1}{2}(\|\by_{s,h}^{\mathrm{P},0}\|_{0,\Omega_{\mathrm{P}}}^2    +\|\ff_{\mathrm{P}}(0)\|_{0,\Omega_{\mathrm{P}}}^2   +\tau \sum_{n=1}^{N-1}\|d_\tau \ff_{\mathrm{P}}^n\|_{0,\Omega_{\mathrm{P}}}^2 ).
\end{align}
\cred{In turn, we apply the inf–sup condition \cblue{in Lemma \ref{inf_sup}} to \(p_h^{\mathrm{S},n}\) and \(p_h^{\mathrm{P},n}\), combine this step with \eqref{fully} and the continuity bounds from Lemma~\ref{coercivity-continuity}, and then proceed to sum over \(n=1,\dots,N\) and to multiply by \(\tau\), to finally obtain:}
\begin{align}\label{stability_3}
   & \cred{\epsilon_1 \tau \sum_{n=1}^N ( \|p_h^{\mathrm{S},n}\|^2_{W_{f,h}} + \|p_h^{\mathrm{P},n}\|^2_{W_{p,h}}  ) } \nonumber \\
    & \cred{ \lesssim \epsilon_1 \tau \sum_{n=1}^N \bigl( \|\bu_{f,h}^{\mathrm{S},n}\|^2_{1,\Omega_{\mathrm{S}}} + \| \bu_{r,h}^{\mathrm{P},n}\|^2_{0,\Omega_{\mathrm{P}}}  
    + \left|\bu_{f,h}^{\mathrm{S},n}- \partial_t \by_{s,h}^{\mathrm{P},n}\right|_{\mathrm{BJS}}^2  + \|\ff_{\mathrm{S}}(t_n)\|^2_{0,\Omega_{\mathrm{S}}} + \|\ff_{\mathrm{P}}(t_n)\|^2_{0,\Omega_{\mathrm{P}}}} \nonumber  \\
    &  \quad \cred{+ \sum_{E \in \mathcal{E}_{\Sigma}} \frac{\gamma \mu_f}{h_E} \| \bn_{\mathrm{S}} \cdot \bu_{f,h}^{\mathrm{S},n} + \bn_{\mathrm{P}} \cdot \bu_{r,h}^{\mathrm{P},n} + \bn_{\mathrm{P}} \cdot d_{\tau} \by_{s,h}^{\mathrm{P},n} \|^2_{0,E} 
    \bigr).}
\end{align}
\cred{By combining \eqref{stability_1}, \eqref{stability_2}, and \eqref{stability_3}, and by choosing $\epsilon_2$ sufficiently small and then $\epsilon_1$ sufficiently small, we can assert that}
\begin{align*}
   & \cred{ \|\bu_{f,h}^{\mathrm{S},N}\|_{0,\Omega_{\mathrm{S}}}^2  + \|\bu_{r,h}^{\mathrm{P},N}\|_{0,\Omega_{\mathrm{P}}}^2 + \|\by_{s,h}^{\mathrm{P},N} \|_{1,\Omega_{\mathrm{P}}}^2 + \|p^{\mathrm{P},N}_h\|_{0,\Omega_{\mathrm{P}}}^2
  + \|\bu_{s,h}^{\mathrm{P},N}\|_{0,\Omega_{\mathrm{P}}}^2   + \tau \sum_{n=1}^N \bigl( \left| \bu^{\mathrm{S},n}_{f,h} - d_{\tau} \by_{s,h}^{\mathrm{P},n} \right|_{\mathrm{BJS}}^2 } \\ & \quad  \cred{+  \left| \bu^{\mathrm{P},n}_{r,h}  \right|_{\mathrm{BJS}}^2     + \|\bu_{r,h}^{\mathrm{P},n}\|_{0,\Omega_{\mathrm{P}}}^2 + \|p_h^{\mathrm{S},n}\|_{W_{f,h}}^2 + \|p_h^{\mathrm{P},n}\|_{W_{p,h}}^2 \bigr) + \tau^2 \sum_{n=1}^N \bigl( \|d_{\tau} \bu_{f,h}^{\mathrm{S},n}\|_{0,\Omega_{\mathrm{S}}}^2 + \|d_{\tau} \bu_{r,h}^{\mathrm{P},n}\|_{0,\Omega_{\mathrm{P}}}^2 } \\ & 
  \quad \cred{ + \|d_{\tau} \by_{s,h}^{\mathrm{P},n}\|_{1,\Omega_{\mathrm{P}}}^2 + \|d_{\tau} p^{\mathrm{P},n}_h\|_{0,\Omega_{\mathrm{P}}}^2  + \|d_{\tau} \bu_{s,h}^{\mathrm{P},n}\|_{0,\Omega_{\mathrm{P}}}^2  \bigr) + \tau \sum_{n=1}^N \biggl( \frac{3 - 2(1 + \varsigma)\widehat{\epsilon_f}^{\prime} C_{\mathrm{tr}} - \breve{\epsilon}_f^{\prime}}{4} \biggr) \|\bu^{\mathrm{S},n}_{f,h}\|_{1,\Omega_{\mathrm{S}}}^2 } \\ 
  & \quad   \cred{+ \tau \sum_{n=1}^N \sum_{E \in \mathcal{E}_{\Sigma}} \biggl( \gamma - \frac{1 + \varsigma}{\widehat{\epsilon_f}^{\prime}} \biggr) h_E^{-1}\|\bn_{\mathrm{S}} \cdot \bu_{f,h}^{\mathrm{S},n} + \bn_{\mathrm{P}} \cdot \bu_{r,h}^{\mathrm{P},n} + \bn_{\mathrm{P}} \cdot d_{\tau} \by_{s,h}^{\mathrm{P},n}\|_{0,E}^2 } \\
  & \cred{ \lesssim \|\bu_{f,h}^{0}\|_{0,\Omega_{\mathrm{S}}}^2  + \|\bu_{r,h}^{0}\|_{0,\Omega_{\mathrm{P}}}^2 + \|\by_{s,h}^{0}\|_{1,\Omega_{\mathrm{P}}}^2  + \|p^{\mathrm{P},0}_h\|_{0,\Omega_{\mathrm{P}}}^2  + \|\bu_{s,h}^{0}\|_{0,\Omega_{\mathrm{P}}}^2  + \left\|\ff_{\mathrm{P}}(0)\right\|_{0,\Omega_{\mathrm{P}}} 
    + (\breve{\epsilon}_f^{\prime})^{-1} \tau \sum_{n=1}^N  \|\boldsymbol{f}_S(t_n)\|_{0,\Omega_{\mathrm{S}}}^2 }  \\& \quad \cred{+ \epsilon_1^{-1} \tau \sum_{n=1}^N  \bigl(  \|\ff_{\mathrm{P}}(t_n)\|_{0,\Omega_{\mathrm{P}}}^2 + \|\theta(t_n)\|_{0,\Omega_{\mathrm{P}}}^2  + \|r_{\mathrm{S}}(t_n)\|_{0,\Omega_{\mathrm{S}}}^2 \bigr)    +\frac{\tau}{2} \sum_{n=1}^{N-1}\left\|\by_{s,h}^{\mathrm{P},n}\right\|_{0,\Omega_{\mathrm{P}}}  +\tau \sum_{n=1}^{N}\left\|d_\tau \ff_{\mathrm{P}}^n\right\|_{0,\Omega_{\mathrm{P}}}.}
\end{align*}
\cred{Hence, applying the discrete Gronwall inequality \cite{MR1299729}, we obtain the desired result.}
\end{proof}

\subsection{Error analysis}
We now turn to analyzing the spatial discretization error. Let $k_f$ and $s_f$ be the degrees of polynomials in $\mathbf{V}_{f,h}$ and $\mathrm{W}_{f,h}$, let $k_p$ and $s_p$ be the degrees of polynomials in $\mathbf{V}_{r,h}$ and $\mathrm{W}_{p,h}$ respectively, and let $k_s$
and $s_s$ be the polynomial degree in $\mathbf{V}_{s,h}$ and $\mathbf{W}_{s,h}$.

Let $Q_{f, h}, Q_{p, h}$, and $\bQ_{s, h}$ be the $L^2$-projection operators onto $\mathrm{W}_{f,h},$$ \mathrm{W}_{p,h}$, and $\bW_{s,h}$ respectively, satisfying:
\begin{subequations}
\begin{align}
(p^{\mathrm{S}}-Q_{f, h} p^{\mathrm{S}}, q^{\mathrm{S}}_h)_{\Omega_{\mathrm{S}}}&=0 & \forall q^{\mathrm{S}}_h \in \mathrm{W}_{f,h}, \label{ees1}\\
(p^{\mathrm{P}}-Q_{p, h} p^{\mathrm{P}}, q^{\mathrm{P}}_h)_{\Omega_{\mathrm{P}}}& =0 & \forall q^{\mathrm{P}}_h \in \mathrm{W}_{p,h}, \label{ees2} \\
(\bu_{s}^{\mathrm{P}}-\bQ_{s, h} \bu_s^{\mathrm{P}}, \bv_{s,h}^{\mathrm{P}})_{\Omega_{\mathrm{P}}}&=0 & \forall \bv_{s,h}^{\mathrm{P}} \in \mathbf{W}_{s,h}. \label{ees3}
\end{align}
\end{subequations}
These operators satisfy the approximation properties \cite{MR1299729}:
\begin{subequations}
\begin{align}
\|p^{\mathrm{S}}-Q_{f, h} p^{\mathrm{S}}\|_{0,\Omega_{\mathrm{S}}} & \leq C_1^{\star} h^{r_{s f}}\|p^{\mathrm{S}}\|_{r_{s f},\Omega_{\mathrm{S}}} & 0 \leq r_{s_f} \leq s_f + 1, \label{ps}\\
\|p^{\mathrm{P}}-Q_{p, h} p^{\mathrm{P}}\|_{0,\Omega_{\mathrm{P}}} &\leq  C_1^{\star} h^{r_{s_p}}\|p^{\mathrm{P}}\|_{{r_{s_p}},\Omega_{\mathrm{P}}} & 0 \leq r_{s_p} \leq s_p + 1, \label{ph}\\
\|\bu_{s}^{\mathrm{P}}-\bQ_{s, h} \bu_s^{\mathrm{P}}\|_{0,\Omega_{\mathrm{P}}} &\leq C_1^{\star} h^{{r}_{s_s}}\|\bu_s^{\mathrm{P}}\|_{{r_{s_s}},\Omega_{\mathrm{P}}} & 0 \leq {r}_{s_s} \leq s_s + 1 \label{vh}.
\end{align}
\end{subequations}
Next, we consider a Stokes-type projection operator $(\bS_{f, h}, R_{f, h}): \mathbf{V}_f \rightarrow \mathbf{V}_{f, h} \times \mathrm{W}_{f,h}$, defined for all $\bv_f^{\mathrm{S}}\in \mathbf{V}_f$ as
\begin{subequations}
\begin{align}
a_f^{\mathrm{S}}(\bS_{f, h} \bv_f^{\mathrm{S}}, \bv_{f,h}^{\mathrm{S}})-b_f^{\mathrm{S}}(\bv_{f,h}^{\mathrm{S}}, R_{f, h} \bv_f^{\mathrm{S}})&=a_f^{\mathrm{S}}(\bv_f^{\mathrm{S}}, \bv_{f,h}^{\mathrm{S}}) & \forall \bv_{f,h}^{\mathrm{S}} \in \mathbf{V}_{f, h}, \label{sp1} \\
b_f^{\mathrm{S}}(\bS_{f, h} \bv_f^{\mathrm{S}}, q^{\mathrm{S}}_h)&=b_f(\bv_f^{\mathrm{S}}, q^{\mathrm{S}}_h) & \forall q^{\mathrm{S}}_h \in \mathrm{W}_{f,h} \label{sp2}.
\end{align}
\end{subequations}
The operator $\bS_{f, h}$ satisfies the following approximation property \cite{MR3851065,MR2788393}:
\begin{align}\label{sp3}
\|\bv_f^{\mathrm{S}}-\bS_{f, h} \bv_f^{\mathrm{S}}\|_{1,\Omega_{\mathrm{S}}} \leq C_1^{\star} h^{r_{k_f}-1}\|\bv_f^{\mathrm{S}}\|_{{r_{k_f}},\Omega_{\mathrm{S}}}, \quad 1 \leq r_{k_f} \leq k_f + 1 .
\end{align}
Similarly, let $\bPi_{r, h}$ be the Stokes projection onto $\mathbf{V}_{r, h}$ satisfying  for all $\bv_r^{\mathrm{P}} \in \mathbf{V}_r$,
\begin{subequations}
\begin{align}\label{mfe1}
(\nabla \cdot \bPi_{r, h} \bv_r^{\mathrm{P}}, q^{\mathrm{P}}_h)& =(\nabla \cdot \bv_r^{\mathrm{P}}, q^{\mathrm{P}}_h) \quad \forall q^{\mathrm{P}}_h \in \mathrm{W}_{p,h}, \\
 \|\bv_r^{\mathrm{P}}-\bPi_{r, h} \bv_r^{\mathrm{P}}\|_{0,\Omega_{\mathrm{P}}} &\leq C_1^{\star} h^{r_{k_p}-1}\|\bv_r^{\mathrm{P}}\|_{H^{r_{k_p}}(\Omega_{\mathrm{P}})}, \quad 1 \leq r_{k_p} \leq k_p + 1. \label{mfe3}
\end{align}\end{subequations}
Finally, let $\bS_{s, h}$ be the Scott--Zhang interpolant from $\mathbf{V}_s$ onto $\mathbf{V}_{s, h}$, satisfying \cite{MR1011446}:
\begin{align}\label{sz1}
& \|\by_s^{\mathrm{P}}-\bS_{s, h} \by_s^{\mathrm{P}}\|_{0,\Omega_{\mathrm{P}}}+h\left|\by_s^{\mathrm{P}}-\bS_{s, h} \by_s^{\mathrm{P}}\right|_{1,\Omega_{\mathrm{P}}} \leq C_1^{\star} h^{r_{k_s}}\|\by_s^{\mathrm{P}}\|_{{r_{k s}},\Omega_{\mathrm{P}}}, 
 \quad 1 \leq r_{k_s} \leq k_s + 1 .
\end{align}
\begin{theorem}\label{appendix_fully}
Assume~\ref{(H1)}-\ref{(H3)}  and suppose that 
$\| \bu_{f,h}^{\mathrm{S},n} \|_{1,\Omega_{\mathrm{S}}} < \frac{\mu_f}{2 S_f^2 K_f^3}$ for $1 \leq n \leq N$ and for any \(\widehat{\epsilon}_f^{\prime},\breve{\epsilon}_f^{\prime}\) that satisfies 
\[
\frac{1 - (\varsigma + 1)\widehat{\epsilon}_f^{\prime} C_{\mathrm{tr}}}{2} > 0,
\]
where \(\varsigma \in \{-1, 0, 1\}\), provided that \(\gamma > (\varsigma + 1)(\widehat{\epsilon}_f^{\prime})^{-1}\). Assume further that the solution of \eqref{mixed-primal} is sufficiently smooth. Then,  the solution of  
\eqref{fully} with initial conditions $\bu_{f,h}^{\mathrm{S}}(0) = \bI_{f,h} \bu_{f,0}, \; \bu_{r,h}^{\mathrm{P}}(0) = \bI_{r,h} \bu_{r,0}, \; \by_{s,h}^{\mathrm{P}}(0) = \bI_{s,h} \by_{s,0},  \; p_{h}^{\mathrm{P}}(0) = Q_{r,h} p^{p,0}$, and $\bu_{s,h}^{\mathrm{P}}(0) = \bQ_{s,h} \bu_{s,0}$, satisfies
\begin{align*}
& \|\bu_f^{\mathrm{S}}-\bu_{f,h}^{\mathrm{S}}\|^2_{l^{\infty}(I;\mathbf{L}^2(\Omega_{\mathrm{S}}))}  + \|\bu_r^{\mathrm{P}}-\bu_{r,h}^{\mathrm{P}}\|^2_{l^{\infty}(I;\mathbf{L}^2(\Omega_{\mathrm{P}}))} 
+ \|p^{\mathrm{P}}-p^{\mathrm{P}}_h\|^2_{l^{\infty}(I;L^2(\Omega_{\mathrm{P}}))}  + \|\by_s^{\mathrm{P}}-\by_{s,h}^{\mathrm{P}}\|^2_{l^{\infty}(I;\mathbf{H}^1(\Omega_{\mathrm{P}}))}  \\ 
& \quad + \|\bu_{s}^{\mathrm{P}} - \bu_{s,h}^{\mathrm{P}}\|^2_{l^{\infty}(I;\mathbf{L}^2(\Omega_{\mathrm{P}}))}   + | \bu_r^{\mathrm{P}}- \bu_{r,h}^{\mathrm{P}} |_{l^2(I;\mathrm{BJS})}^2  + | (\bu_f^{\mathrm{S}}- d_{\tau} \by_s^{\mathrm{P}} ) - (\bu^{\mathrm{S}}_{f,h} - d_{\tau} \by_{s,h}^{\mathrm{P}} )|^2_{l^2(I;\mathrm{BJS})}  \\ 
&\quad  
+ \|\bu_r^{\mathrm{P}}-\bu_{r,h}^{\mathrm{P}}\|^2_{l^2(I;{\mathbf{L}^2(\Omega_{\mathrm{P}})})}  + \| p^{\mathrm{S}} -  p^{\mathrm{S}}_h \|^2_{l^2(I;L^2(\Omega_{\mathrm{S}}))}  
+ \| p^{\mathrm{P}} - p^{\mathrm{P}}_h \|^2_{l^2(I;L^2(\Omega_{\mathrm{P}}))} 
\\& \quad  + \left( \gamma -  \frac{1+ \varsigma} {\widehat{\epsilon_f}^{\prime}} \right) 
\sum_{E \in \mathcal{E}_{\Sigma}}  \frac{\mu_f}{h_E}  
\|(\bu_f^{\mathrm{S}}- \bu^{\mathrm{S}}_{f,h}) \cdot \bn_{\mathrm{S}}  + (\bu_r^{\mathrm{P}}- \bu^{\mathrm{P}}_{r,h}) \cdot \bn_{\mathrm{P}} 
+ d_{\tau} (\by_s^{\mathrm{P}}- \by^{\mathrm{P}}_{s,h}) \cdot \bn_{\mathrm{S}} \|_{0, E}^2 \\
&  \lesssim \exp(T) \bigg[ h^{2 r_{k_p}-2} 
\bigl( \| \bu_r^{\mathrm{P}} \|_{l^2(I;\mathbf{H}^{r_{k_p}}(\Omega_{\mathrm{P}}))}^2 + \| \bu_r^{\mathrm{P}} \|_{l^2(I;\mathbf{H}^{r_{k_p}+1}(\Omega_{\mathrm{P}}))}^2 
+ \| \bu_r^{\mathrm{P}} \|_{l^{\infty}(I;\mathbf{H}^{r_{k_p}+1}(\Omega_{\mathrm{P}}))}^2 + \| \bu_r^{\mathrm{P}} \|_{l^{\infty}(I;\mathbf{H}^{r_{k_p}}(\Omega_{\mathrm{P}}))}^2 \\
& \quad    + \|\partial_t \bu_r^{\mathrm{P}} \|_{L^2(I;\mathbf{H}^{r_{k_p}}(\Omega_{\mathrm{P}}))}^2 
+ \|\partial_t \bu_r^{\mathrm{P}} \|_{L^2(I;\mathbf{H}^{r_{k_p}+1}(\Omega_{\mathrm{P}}))}^2 + \|\partial_t \bu_r^{\mathrm{P}} \|_{L^{\infty}(I;\mathbf{H}^{r_{k_p}}(\Omega_{\mathrm{P}}))}^2 + \|\partial_{tt} \bu_r^{\mathrm{P}} \|_{L^{\infty}(I;\mathbf{H}^{r_{k_p}}(\Omega_{\mathrm{P}}))}^2 \bigr)  \\
& \quad   + h^{2 r_{s_s}} \bigl( \| \bu_s^{\mathrm{P}} \|_{l^2(I;\mathbf{H}^{r_{s_s}}(\Omega_{\mathrm{P}}))}^2 + \| \bu_s^{\mathrm{P}} \|_{l^{\infty}(I;\mathbf{H}^{r_{s_s}}(\Omega_{\mathrm{P}}))}^2 + \|\partial_t \bu_s^{\mathrm{P}} \|_{L^2(I;\mathbf{H}^{r_{s_s}}(\Omega_{\mathrm{P}}))}^2 \bigr) \\
& \quad    + h^{2 r_{k_s}-2} \bigl( \| \by_s^{\mathrm{P}} \|_{l^{\infty}(I;\mathbf{H}^{r_{k_s}+1}(\Omega_{\mathrm{P}}))}^2    + \|\partial_t \by_s^{\mathrm{P}} \|_{L^{\infty}(I;\mathbf{H}^{r_{k_s}+1}(\Omega_{\mathrm{P}}))}^2 + \|\partial_t \by_s^{\mathrm{P}} \|_{L^{\infty}(I;\mathbf{H}^{r_{k_s}}(\Omega_{\mathrm{P}}))}^2  \\
& \quad  + \|\partial_t \by_s^{\mathrm{P}} \|_{L^2(I;\mathbf{H}^{r_{k_s}+1}(\Omega_{\mathrm{P}}))}^2  
+ \|\partial_t \by_s^{\mathrm{P}} \|_{L^2(I;\mathbf{H}^{r_{k_s}+1}(\Omega_{\mathrm{P}}))}^2  + \|\partial_{tt} \by_s^{\mathrm{P}} \|_{L^{\infty}(I;\mathbf{H}^{r_{k_s}+1}(\Omega_{\mathrm{P}}))}^2 
+ \|\partial_{tt} \by_s^{\mathrm{P}} \|_{L^{\infty}(I;\mathbf{H}^{r_{k_s}}(\Omega_{\mathrm{P}}))}^2 \bigr) \\
& \quad  + h^{2 r_{s_p}} \bigl( \|p^{\mathrm{P}}\|^2_{l^{\infty}(I;H^{r_{s_p}}(\Omega_{\mathrm{P}}))} 
+ \| \partial_t p^{\mathrm{P}}\|^2_{L^2(I;H^{r_{s_p}}(\Omega_{\mathrm{P}}))} \bigr) 
+ h^{2 r_{s_f}}\|p^{\mathrm{S}}\|^2_{l^2(I;H^{r_{s_f}}(\Omega_{\mathrm{S}}))} \\
& \quad + h^{2 r_{k_f}}  \bigl(\| \bu_f^{\mathrm{S}} \|_{l^2(I;\mathbf{H}^{r_{k_f} +1}(\Omega_{\mathrm{S}}))}^2  + \|\partial_t \bu_f^{\mathrm{S}}\|^2_{L^2(I;\mathbf{H}^{r_{k_f} +1}(\Omega_{\mathrm{S}}))} \bigr)   \\
& \quad  + \tau^2 \bigl( \|\partial_{tt} \bu_r^{\mathrm{P}} \|_{L^2(I;\mathbf{L}^2(\Omega_{\mathrm{P}}))}^2  
+ \|\partial_{tt} \bu_r^{\mathrm{P}} \|_{L^{\infty}(I;\mathbf{L}^2(\Omega_{\mathrm{P}}))}^2   + \|\partial_{ttt} \bu_r^{\mathrm{P}} \|_{L^{2}(I;\mathbf{L}^2(\Omega_{\mathrm{P}}))}^2 
+ \|\partial_{tt} \bu_s^{\mathrm{P}} \|_{L^2(I;\mathbf{L}^2(\Omega_{\mathrm{P}}))}^2   \\ 
& \quad + \|\partial_{tt} \bu_s^{\mathrm{P}} \|_{L^{\infty}(I;\mathbf{L}^2(\Omega_{\mathrm{P}}))}^2
+ \|\partial_{ttt} \bu_s^{\mathrm{P}} \|_{L^{2}(I;\mathbf{L}^2(\Omega_{\mathrm{P}}))}^2 + \|\partial_{tt} \by_s^{\mathrm{P}} \|_{L^{\infty}(I;\mathbf{H}^1(\Omega_{\mathrm{P}}))}^2 + \|\partial_{tt} \by_s^{\mathrm{P}} \|_{L^2(I;\mathbf{H}^1(\Omega_{\mathrm{P}}))}^2\\
& \quad   + \|\partial_{ttt} \by_s^{\mathrm{P}} \|_{L^{2}(I;\mathbf{H}^1(\Omega_{\mathrm{P}}))}^2 
+ \| \partial_{tt} p^{\mathrm{P}}\|^2_{L^2(I;L^2(\Omega_{\mathrm{P}}))}  +  \|\partial_{tt} \bu_f^{\mathrm{S}}\|^2_{L^2(I;\mathbf{L}^2(\Omega_{\mathrm{P}}))} + \| \partial_t \bu_f^{\mathrm{S}} \|_{L^2(0, T; H^1(\Omega_{\mathrm{S}}))}^2 \bigr) \bigg], 
\end{align*}
where $0 \leq r_{k_f} \leq k_f$, $0 \leq r_{s_f} \leq s_f + 1$, $0 \leq r_{k_p}  \leq k_p$, $0 \leq r_{s_p} \leq s_p + 1$,  
$ 0 \leq r_{k_s} \leq k_s$,  $0 \leq r_{s_s} \leq s_s + 1$.
\end{theorem}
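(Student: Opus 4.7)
The plan is to follow a standard energy-based error analysis using an orthogonal splitting of the errors into projection and discrete parts. For each unknown I would write
\[
e_{\bu_f^{\mathrm{S}}} := \bu_f^{\mathrm{S}} - \bu_{f,h}^{\mathrm{S}} = (\bu_f^{\mathrm{S}} - \bS_{f,h}\bu_f^{\mathrm{S}}) + (\bS_{f,h}\bu_f^{\mathrm{S}} - \bu_{f,h}^{\mathrm{S}}) =: \rho_{\bu_f^{\mathrm{S}}} + \eta_{\bu_f^{\mathrm{S}}},
\]
and proceed analogously for the remaining variables, using $\bPi_{r,h}$ for $\bu_r^{\mathrm{P}}$, $\bS_{s,h}$ for $\by_s^{\mathrm{P}}$, $Q_{f,h}$ for $p^{\mathrm{S}}$, $Q_{p,h}$ for $p^{\mathrm{P}}$ and $\bQ_{s,h}$ for $\bu_s^{\mathrm{P}}$. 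The projection errors $\rho_{(\bullet)}$ are controlled by the approximation properties \eqref{ps}--\eqref{sz1}, \eqref{sp3}, \eqref{mfe3}; the rest of the work consists in estimating the discrete parts $\eta_{(\bullet)}$.

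Next I would derive an error equation by subtracting the fully discrete scheme \eqref{fully} from the continuous problem \eqref{n_weak} evaluated at $t_n$. This produces, on one side, a Galerkin-type identity in the $\eta$'s, driven on the right by $(i)$ projection residuals expressible through $\rho_{(\bullet)}$, $(ii)$ the time-truncation terms $\partial_t \bx(t_n) - d_\tau \bx^n$, which via the integral remainder are bounded by $\tau$ times $L^2$-in-time norms of the second time derivatives, and $(iii)$ interface consistency residuals produced by Nitsche's formulation (in particular, the $\varsigma$-weighted terms $b_\Gamma$ and the penalty $c_\Gamma$ evaluated on exact minus projected traces). The continuous exact solution has vanishing jump $\bu_f^{\mathrm{S}}\cdot\bn_{\mathrm{S}} + (\partial_t \by_s^{\mathrm{P}}+\bu_r^{\mathrm{P}})\cdot\bn_{\mathrm{P}}=0$ on $\Sigma$, so the consistency of the Nitsche formulation makes most of the interface terms reduce to projection-error contributions.

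I would then test the error equation with $(\eta_{\bu_f^{\mathrm{S}}}^n,\eta_{p^{\mathrm{S}}}^n,\eta_{\bu_r^{\mathrm{P}}}^n,\eta_{p^{\mathrm{P}}}^n,d_\tau\eta_{\by_s^{\mathrm{P}}}^n,\eta_{\bu_s^{\mathrm{P}}}^n)$, as done in the stability analysis, and use the discrete product rule $\int \xi^n d_\tau \xi^n = \tfrac{1}{2}d_\tau\|\xi^n\|^2+\tfrac{\tau}{2}\|d_\tau\xi^n\|^2$ together with the coercivity bounds collected in Lemma~\ref{coercivity-continuity}. This yields control of $\|\eta_{\bu_f^{\mathrm{S}}}^n\|_{1,\Omega_{\mathrm{S}}}$, the two BJS seminorms, the Nitsche penalty jump and the $\phi^2\kappa^{-1}$-weighted $L^2$ norm of $\eta_{\bu_r^{\mathrm{P}}}^n$. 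The trace estimates \eqref{I1}--\eqref{I2} with the parameter choice in the statement and the hypothesis $\gamma>(\varsigma+1)(\widehat{\epsilon}_f')^{-1}$ are then used to absorb interface cross terms into the diagonal (this is precisely the place where the constants displayed in the theorem arise). Projection residuals and time-truncation terms on the right are handled by Cauchy--Schwarz and Young's inequalities, yielding right-hand-side contributions of the form listed in the theorem. After summing over $n=1,\ldots,N$, multiplying by $\tau$, and invoking the discrete Gronwall lemma, one controls all energy-type quantities. Pressure bounds $\|\eta_{p^{\mathrm{S}}}\|_{l^2(I;L^2)}$ and $\|\eta_{p^{\mathrm{P}}}\|_{l^2(I;L^2)}$ follow from the inf-sup condition in Lemma~\ref{inf_sup} applied to the error equation, substituting back the already controlled velocity quantities. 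The final bound is obtained by combining the $\eta$-estimates with the projection-error bounds via the triangle inequality.

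The main obstacle will be the nonlinear convective term. Its contribution to the error equation reads
\[
(\bu_{f,h}^{\mathrm{S},n-1}\!\cdot\!\bnabla\bu_{f,h}^{\mathrm{S},n},\bv_{f,h}^{\mathrm{S}}) - (\bu_f^{\mathrm{S}}(t_n)\!\cdot\!\bnabla\bu_f^{\mathrm{S}}(t_n),\bv_{f,h}^{\mathrm{S}}),
\]
which I would split by adding and subtracting $(\bS_{f,h}\bu_f^{\mathrm{S}}(t_{n-1})\cdot\bnabla\bS_{f,h}\bu_f^{\mathrm{S}}(t_n),\bv_{f,h}^{\mathrm{S}})$ and $(\bu_f^{\mathrm{S}}(t_{n-1})\cdot\bnabla\bu_f^{\mathrm{S}}(t_n),\bv_{f,h}^{\mathrm{S}})$, producing three pieces: a time-lag piece controlled by $\tau\|\partial_t\bu_f^{\mathrm{S}}\|_{L^2(I;\mathbf{H}^1)}$, a projection-error piece controlled by $\rho_{\bu_f^{\mathrm{S}}}$ in $\mathbf{H}^1$, and a genuinely nonlinear piece in $\eta_{\bu_f^{\mathrm{S}}}$. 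For the last one, the hypothesis $\|\bu_{f,h}^{\mathrm{S},n}\|_{1,\Omega_{\mathrm{S}}}<\mu_f/(2S_f^2K_f^3)$ combined with the Sobolev and Korn inequalities used in Lemma~\ref{coercivity-continuity} allows absorption into the coercive $a_f^{\mathrm{S}}$-term with a factor strictly less than one, preserving positivity. Careful bookkeeping of all interface contributions generated by the Nitsche method (symmetric/antisymmetric/incomplete variants $\varsigma\in\{-1,0,1\}$) so that they combine with the trace bounds to give exactly the displayed constant $(1-(\varsigma+1)\widehat{\epsilon}_f'C_{\mathrm{tr}})/2$ will be the other delicate aspect of the proof.
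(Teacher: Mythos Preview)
Your proposal is correct and follows essentially the same route as the paper: error splitting via the projections $\bS_{f,h},\bPi_{r,h},\bS_{s,h},Q_{f,h},Q_{p,h},\bQ_{s,h}$, derivation of an error equation by subtracting \eqref{fully} from \eqref{n_weak}, testing with $(\eta_{\bu_f^{\mathrm{S}}}^n,\eta_{p^{\mathrm{S}}}^n,\eta_{\bu_r^{\mathrm{P}}}^n,\eta_{p^{\mathrm{P}}}^n,d_\tau\eta_{\by_s^{\mathrm{P}}}^n,\eta_{\bu_s^{\mathrm{P}}}^n)$, absorbing interface cross terms via \eqref{I1}--\eqref{I2} under the stated constraint on $\gamma$, handling the convective term through a time-lag/projection/discrete-error decomposition with absorption governed by the smallness hypothesis on $\|\bu_{f,h}^{\mathrm{S},n}\|_{1,\Omega_{\mathrm{S}}}$, recovering the pressures through Lemma~\ref{inf_sup}, and closing with discrete Gronwall and the triangle inequality. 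The only cosmetic difference is the precise intermediate splitting of the convective difference (the paper writes it as $\mathcal{S}(\bu_f^{\mathrm{S},n})\cdot\bnabla\bu_f^{\mathrm{S},n}+\bu_f^{\mathrm{S},n-1}\cdot\bnabla e_f^n+e_f^{n-1}\cdot\bnabla\bu_{f,h}^{\mathrm{S},n}$), which leads to the same three contributions you identify.
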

The proof of this result is postponed to Appendix \ref{error_analysis_fully_appendix}. 
\section{Numerical tests}\label{section6}
We now present several numerical examples that validate the accuracy of the derived error estimates. All simulations were performed using  the open-source finite element library Gridap (version 0.17.12) \cite{badia2020gridap}, whose 
high-level API  supports all components necessary for defining the problem, including integration over facets and separate domains, e.g., \(\Sigma\), \(\Omega_{\mathrm{S}}\), and \(\Omega_{\mathrm{P}}\), as required by ~\eqref{fully}.
\begin{remark}[Choice of the penalty parameter $\gamma$]
\cred{The stability analysis in Theorem \ref{stability_fully} shows that the Nitsche parameter 
$\gamma$ must be sufficiently large to guarantee coercivity of the 
interface contribution and, consequently, well-posedness of the 
discrete problem. In particular, $\gamma$ must dominate the 
negative consistency terms arising from the application of the trace 
and Young inequalities. }

\cred{Importantly, the required lower bound for $\gamma$ is independent of 
the mesh size $h$ and depends only on physical parameters and trace 
constants. In practice, moderate values of $\gamma$ are sufficient to 
ensure stability without deteriorating the conditioning of the linear 
system. In all numerical experiments, values $\gamma = 30$ or $40$ 
were found to provide stable and accurate results.}
\end{remark}
\subsection{Convergence tests against manufactured solutions}
The accuracy of the   discretization is verified using the following closed-form solutions defined on the domains $\Omega_{\mathrm{P}}=(0,1) \times(0.5,1), \Omega_{\mathrm{S}}=(0,1) \times(0,0.5)$, separated by the interface $\Sigma=(0,1) \times\{0.5\}$.

The synthetic model parameters are taken as
$\lambda_p=\mu_p=\mu_f=10$, $\alpha_{\mathrm{BJS}}=1$,  $\phi=0.1$, $\kappa=\rho_p= \rho_f= K =1$, $\theta =0.0$, $\gamma = 40$,
all regarded non-dimensional as we will be simply testing the convergence of the FE approximations. 
The model problem is then complemented with the appropriate Dirichlet boundary conditions and initial data. These functions do not necessarily fulfill the interface conditions, so additional terms are required giving modified relations on $\Sigma$:
\begin{gather*}
\bu_f^{\mathrm{S}}\cdot \bn_{\mathrm{S}}+\left(\partial_t \by_s^{\mathrm{P}}+\bu_r^{\mathrm{P}}\right) \cdot \bn_{\mathrm{P}}=m_{\Sigma,\mathrm{ex}}^1 , \quad 
-\left(\bsigma^{\mathrm{S}} \bn_{\mathrm{S}}\right) \cdot \bn_{\mathrm{S}}= -\left(\bsigma_f^{\mathrm{P}} \bn_{\mathrm{P}}\right) \cdot \bn_{\mathrm{P}} + m_{\Sigma,\mathrm{ex}}^2  , \\
-\left(\bsigma_f^{\mathrm{S}} \bn_{\mathrm{S}}\right) \cdot \btau_{f, j}=\mu_{f}
  \alpha_{\mathrm{BJS}} \sqrt{Z_j^{-1}}\left(\bu_f^{\mathrm{S}}-{\partial_t \by_s^{\mathrm{P}}}\right) \cdot \btau_{f, j} + m_{\Sigma,\mathrm{ex}}^4,\\ \bsigma_f^{\mathrm{S}} \bn_{\mathrm{S}}+\bsigma_f^{\mathrm{P}} \bn_{\mathrm{P}} +\bsigma_s^{\mathrm{P}} \bn_{\mathrm{P}}= m_{\Sigma,\mathrm{ex}}^3  , \quad
-(\bsigma_f^{\mathrm{P}} \bn_{\mathrm{P}}) \cdot \tau_{f,j} = \mu_{f}
  \alpha_{\mathrm{BJS}} \sqrt{Z_j^{-1}}\bu_r^{\mathrm{P}} \cdot \btau_{f, j} + m_{\Sigma,\mathrm{ex}}^5,
\end{gather*}
and the additional scalar and vector terms $m_{\Sigma,\mathrm{ex}}^i$ (computed with the exact solutions \eqref{num} entail the following changes in the linear functional $F$ (cf. \eqref{eq:linear-funct}))
\begin{align*}
\cblue{F(\vec{\by})} &:= \cblue{ F_1(\bv_f^{\mathrm{S}}) +  F_2(\bv_r^{\mathrm{P}}) +  F_3(\bw_s^{\mathrm{P}}) + F_4(q^{\mathrm{S}})}, \quad \cblue{\text{with}} \\
 F_1(\bv_f^{\mathrm{S}}) &:= \int_{\Omega_{\mathrm{S}}}  \ff_{\mathrm{S}} \bv_f^{\mathrm{S}} + \int_{{\Sigma}} \frac{\gamma \mu_f}{h_E} m_{\Sigma,\mathrm{ex}}^1 (\bv_f^{\mathrm{S}} \cdot \bn_{\mathrm{S}}) - \int_{{\Sigma}} \frac{\gamma \mu_f}{h_E} m_{\Sigma,\mathrm{ex}}^1  (2 \mu_f \beps(\bu_{f}^{\mathrm{S}}) )\bn_{\mathrm{S}}\bn_{\mathrm{S}}  - \langle  m_{\Sigma,\mathrm{ex}}^4 , \bv_f^{\mathrm{S}} \cdot \tau_{f,j} \rangle_{\Sigma},
 \\ 
 F_2(\bv_r^{\mathrm{P}}) &:= \int_{\Omega_{\mathrm{P}}} \rho_f \phi \ff_{\mathrm{P}} \bv_f^{\mathrm{S}} + \int_{{\Sigma}} \frac{\gamma \mu_f}{h_E} m_{\Sigma,\mathrm{ex}}^1 (\bv_r^{\mathrm{P}} \cdot \bn_{\mathrm{P}}) + \langle  m_{\Sigma,\mathrm{ex}}^2 , \bv_r^{\mathrm{P}} \cdot \bn_{\mathrm{P}} \rangle_{\Sigma}   - \langle  m_{\Sigma,\mathrm{ex}}^5 , \bv_r^{\mathrm{P}} \cdot \tau_{f,j} \rangle_{\Sigma}, \\
 F_3(\bw_s^{\mathrm{P}})&:=  \int_{\Omega_{\mathrm{P}}} \rho_p \ff_{\mathrm{P}} \bw_s^{\mathrm{P}} + \int_{{\Sigma}} \frac{\gamma \mu_f}{h_E} m_{\Sigma,\mathrm{ex}}^1 (\bw_s^{\mathrm{P}} \cdot \bn_{\mathrm{P}}) +  \langle  m_{\Sigma,\mathrm{ex}}^3 , \bw_s^{\mathrm{P}}  \rangle_{\Sigma}   + \langle  m_{\Sigma,\mathrm{ex}}^4 , \bw_s^{\mathrm{P}} \cdot \tau_{f,j} \rangle_{\Sigma}, \\ 
  F_4(q^{\mathrm{S}}) & := - \int_{{\Sigma}} \frac{\gamma \mu_f}{h_E} m_{\Sigma,\mathrm{ex}}^1  q^{\mathrm{S}}.
\end{align*}
\begin{remark}
   The spatial and temporal convergence studies use separate manufactured solutions to evaluate the corresponding discretization errors independently. For the spatial convergence test, smooth non-polynomial exact solutions involving trigonometric functions are considered, and the time step is chosen sufficiently small so that temporal errors are negligible. For the temporal convergence test, sufficiently refined fixed mesh is used so that spatial errors do not affect the observed rates. Moreover, the exact solutions are chosen such that their spatial components belong to the corresponding finite element spaces, allowing the optimal convergence rates of the backward Euler scheme to be clearly observed.
\end{remark}
$\mathbf{Space~ convergence:}$
The exact solution for space convergence is defined as:
\begin{align*}\label{num}
\nonumber
\bu_f^{\mathrm{S}} &= \left(\begin{array}{c}
 t x^3 \cos(4 \pi y)\\ 
- 2 t x^3 \sin(4 \pi y)
\end{array}\right), \quad  
p^{\mathrm{S}} = t^2(1-\sin(4 \pi x) \sin(4 \pi y)), \\
\bu_r^{\mathrm{P}} &= \left(\begin{array}{c}
t^2 \sin^2(4\pi y) - t x^3 \cos(4 \pi y)\\
t^2 \sin^2(4\pi y) + 2 t x^3 \sin(4 \pi y)
\end{array}\right),  \quad
\bu_s^{\mathrm{P}} = \left(\begin{array}{c}
 t x^3 \cos(4 \pi y)\\
-2 t x^3 \sin(4 \pi y)
\end{array}\right), \\
\by_s^{\mathrm{P}} &= \left(\begin{array}{c}
0.5 t^2 x^3 \cos(4 \pi y)\\  
- t^2 x^3 \sin(4 \pi y)
\end{array}\right), \quad
p^{\mathrm{P}} = t^2(1-\sin(4 \pi x) \sin(4 \pi y)).
\end{align*}
We generate successively refined simplicial grids and use a sufficiently small (non dimensional) time step $\tau = h \times 10^{-3}$ and final time $T=0.001$, to guarantee that the error produced by the time discretization does not dominate. Errors between the approximate and exact solutions are shown in Table \ref{conv_test_space}.  Theoretically, we observe that both
the relative velocity and solid displacement exhibit sub-optimal convergence, but in practice only the relative velocity exhibits this behavior in our tests. 
\newline
\begin{table}[t!]
\setlength{\tabcolsep}{2pt}
\centering
\small
\begin{tabular}{r|c|c|c|c|c|c|c|c|c}
\toprule
DoFs & $h$ & $\|e_{\bu_f^{\mathrm{S}}}\|_{l^2(\mathbf{H}^1)}$ & rate & $\|e_{p^{\mathrm{S}}}\|_{l^2(L^2)}$ & rate & $\|e_{\bu_r}\|_{l^2(\mathbf{L}^2)}$ & rate & $\|e_{p^{\mathrm{P}}}\|_{l^2(L^2)}$ & rate \\
\midrule
111   & 0.5000 & $1.79\times10^{-4}$ & -- & $3.91\times10^{-3}$ & -- & $6.76\times10^{-3}$ & -- & $1.87\times10^{-6}$ & -- \\
439   & 0.2500 & $2.90\times10^{-5}$ & 2.627 & $2.06\times10^{-3}$ & 0.929 & $8.86\times10^{-4}$ & 2.932 & $3.45\times10^{-7}$ & 2.440 \\
1767  & 0.1250 & $4.67\times10^{-6}$ & 2.632 & $4.23\times10^{-4}$ & 2.281 & $1.07\times10^{-4}$ & 3.054 & $1.55\times10^{-8}$ & 4.475 \\
7111  & 0.0625 & $9.41\times10^{-7}$ & 2.312 & $8.84\times10^{-5}$ & 2.258 & $1.80\times10^{-5}$ & 2.571 & $1.16\times10^{-9}$ & 3.741 \\
28551 & 0.0312 & $2.52\times10^{-7}$ & 1.901 & $1.73\times10^{-5}$ & 2.353 & $4.85\times10^{-6}$ & 1.887 & $2.41\times10^{-10}$& 2.269 \\
\bottomrule 
\end{tabular}
\bigskip 
\begin{tabular}{r|c|c|c|c|c}
\toprule 
DoFs & $h$ & $\|e_{\by_s^{\mathrm{P}}}\|_{l^2(\mathbf{H}^1)}$ & rate & $\|e_{\bu_s^{\mathrm{P}}}\|_{l^2(\mathbf{L}^2)}$ & rate   \\
\midrule 
111   & 0.5000 & $2.98\times10^{-6}$ & -- & $1.08\times10^{-5}$ & -- \\
439   & 0.2500 & $1.17\times10^{-6}$ & 1.350 & $1.67\times10^{-6}$ & 2.700 \\
1767  & 0.1250 & $1.65\times10^{-7}$ & 2.825 & $3.10\times10^{-7}$ & 2.424 \\
7111  & 0.0625 & $2.40\times10^{-8}$ & 2.779 & $6.78\times10^{-8}$ & 2.196 \\
28551 & 0.0312 & $4.15\times10^{-9}$ & 2.532 & $1.64\times10^{-8}$ & 2.045 \\
\bottomrule 
\end{tabular}
\caption{Experimental errors and convergence rates computed at the final time step for variables $\bu_f^{\mathrm{S}}, \bu_r^{\mathrm{P}}, p^{\mathrm{S}}, p^{\mathrm{P}}, \by_s^{\mathrm{P}}$, and $\bu_s^{\mathrm{P}}$ using the finite element spaces $\mathbb{P}_2^2$–$\mathbb{P}_1$–$\mathbb{P}_2^2$–$\mathbb{P}_1$–$\mathbb{P}_2^2$–$\mathbb{P}_1^2$.}
\label{conv_test_space}
\end{table}
\vspace{0.4mm}
$\mathbf{Time ~convergence:}$
The exact solution for temporal convergence is defined as:
\begin{align*}
\bu_f^{\mathrm{S}} &= \left(\begin{array}{c}
t^2(y^2-2xy)\\
t^2(y^2+2xy)
\end{array}\right), \quad
p_f^{\mathrm{S}} = t^2(x-y)-\frac{t^2}{4}, \quad
\bu_r^{\mathrm{P}} = \left(\begin{array}{c}
t^2(x^2+xy)\\
t^2(-y^2+xy)
\end{array}\right), \quad
p_h^{\mathrm{P}} = t^2(2x-y)-\frac{t^2}{4}, \\
\by_s^{\mathrm{P}} &= \left(\begin{array}{c}
\frac{1}{2}t^2(x^2+xy)\\
\frac{1}{2}t^2(y^2-xy)
\end{array}\right), \quad
\bu_s^{\mathrm{P}} = \left(\begin{array}{c}
t(x^2+xy)\\
t(y^2-xy)
\end{array}\right).
\end{align*}
The backward Euler method is assessed for time convergence and verified by partitioning the time interval $(0, 0.1)$ into successively refined uniform discretizations and computing cumulative errors $\hat{e}_s = \bigl( \sum_{n=1}^N \tau  \| s(t_{n+1}) - s_h^{n+1}\|^2_{\star}\bigr)^{1/2}$,
where $\| \cdot \|_{\star}$ is the appropriate space norm for the generic vector or scalar field $s$. For this test we use a fixed mesh $h=0.03125$. The results are shown in Table \ref{conv_test_time_M}, confirming the expected first-order convergence.
\begin{table}[t!]
\setlength{\tabcolsep}{2pt}
\centering
\begin{tabular}{r|c|c|c|c|c|c|c|c}
\toprule
$\tau$ & $\hat{e}_{\bu_f^{\mathrm{S}}}$ & rate &
$\hat{e}_{\bu_r^{\mathrm{P}}}$ & rate &
$\hat{e}_{p^{\mathrm{S}}}$ & rate &
$\hat{e}_{p_h}$ & rate \\
\midrule
$0.0500$ & $3.19\times10^{-4}$ & $-$
& $1.20\times10^{-1}$ & $-$
& $5.04\times10^{-3}$ & $-$
& $8.08\times10^{-3}$ & $-$ \\
$0.0250$ & $1.48\times10^{-4}$ & $1.105$
& $6.02\times10^{-2}$ & $0.996$
& $2.32\times10^{-3}$ & $1.117$
& $3.50\times10^{-3}$ & $1.207$ \\
$0.0125$ & $7.19\times10^{-5}$ & $1.046$
& $3.02\times10^{-2}$ & $0.994$
& $1.11\times10^{-3}$ & $1.071$
& $1.61\times10^{-3}$ & $1.116$ \\
$0.0063$ & $3.65\times10^{-5}$ & $0.978$
& $1.52\times10^{-2}$ & $0.991$
& $5.43\times10^{-4}$ & $1.024$
& $7.74\times10^{-4}$ & $1.062$ \\
\bottomrule
\end{tabular}
\bigskip
\begin{tabular}{r|c|c|c|c}
\toprule
$\tau$ & $\hat{e}_{\by_s^{\mathrm{P}}}$ & rate &
$\hat{e}_{\bu_s^{\mathrm{P}}}$ & rate \\
\midrule
$0.0500$ & $7.45\times10^{-4}$ & $-$
& $4.42\times10^{-3}$ & $-$ \\
$0.0250$ & $3.66\times10^{-4}$ & $1.027$
& $2.27\times10^{-3}$ & $0.959$ \\
$0.0125$ & $1.86\times10^{-4}$ & $0.972$
& $1.17\times10^{-3}$ & $0.953$ \\
$0.0063$ & $9.64\times10^{-5}$ & $0.952$
& $6.05\times10^{-4}$ & $0.955$ \\
\bottomrule
\end{tabular}
\caption{Experimental cumulative errors associated with the temporal discretization and convergence rates for the approximate solutions $\bu_f^{\mathrm{S}}, p^{\mathrm{S}}, \bu_r^{\mathrm{P}}, p_h, \by_s^{\mathrm{P}}, \text{ and } \bu_s^{\mathrm{P}}$, using a backward Euler scheme.}
\label{conv_test_time_M}
\end{table}
\subsection{2D flow in a channel with rigid obstacles between porous layers}
The computational domain is a two-dimensional channel that contains a free-flow region bounded above and below by porous layers. Three rigid, eye-shaped obstacles are embedded in the central free-flow region. At the inlet, the free-flow velocity \( \bu_f^{\mathrm{S}} \), pore velocity \( \bu_r^{\mathrm{P}} \), and solid displacement \( \by_s^{\mathrm{P}} \) are prescribed by the inflow profile
$
\bu_{\mathrm{in}} = \bigl( \frac{1}{0.49}(0.1\,(x_2 + 0.2)\,(1.2 - x_2)){0.49}, \; 0 \bigr)$.
No-slip conditions are applied on the surfaces of the rigid obstacles, i.e., \( \bu_f^{\mathrm{S}} = \mathbf{0} \). On the exterior walls, \( \bu_r^{\mathrm{P}} = \mathbf{0} \) and \( \by_s^{\mathrm{P}} = \mathbf{0} \). At the outlet, a do-nothing (zero-traction) boundary condition is imposed on both the velocity and displacement fields.

The physical parameters are defined as follows: $\mu_f = 0.01$, $\rho_f = 1.0$, $\rho_p = 1.0$, $\mu_p = 1.0336 \times 10^3$, $\lambda_p = 4.9364 \times 10^4$, $\kappa = 1 \times 10^{-3}$, $K = 1 \times 10^6$, $\theta = 0$, $\alpha_{\mathrm{BJS}} = 1$, $\phi = 0.3$, and $\gamma = 30$. The simulation is performed with a time step $\tau = 10^{-3}$ until the final time $T = 0.1$.

\begin{figure}[t!]
  \centering
   \subfloat[Fluid and porous velocity]{%
    \includegraphics[width=0.49\textwidth]{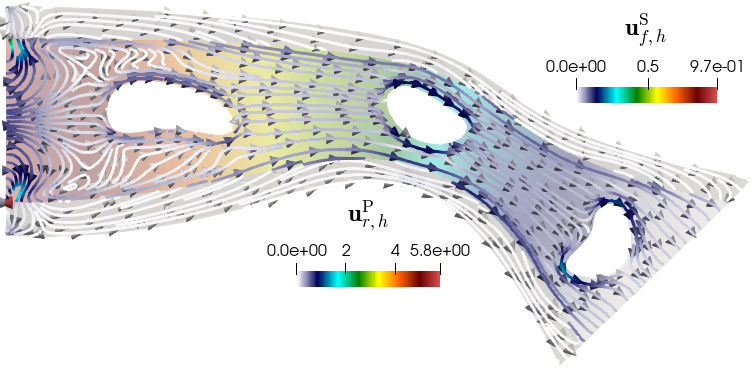}
  } 
  \subfloat[Fluid and porous pressure]{%
    \includegraphics[width=0.49\textwidth]{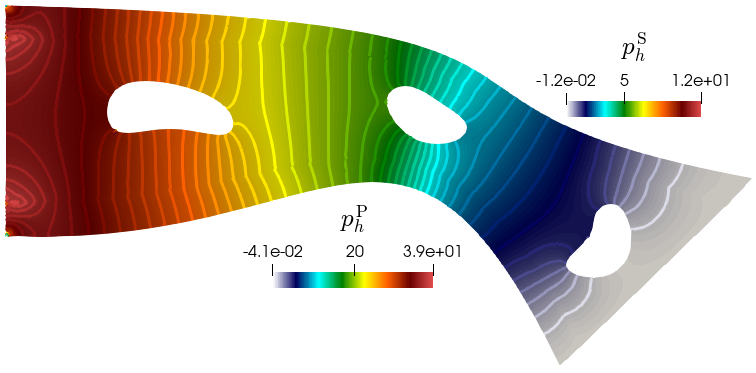}
  }\\
  \subfloat[Solid displacement]{%
    \includegraphics[width=0.49\textwidth]{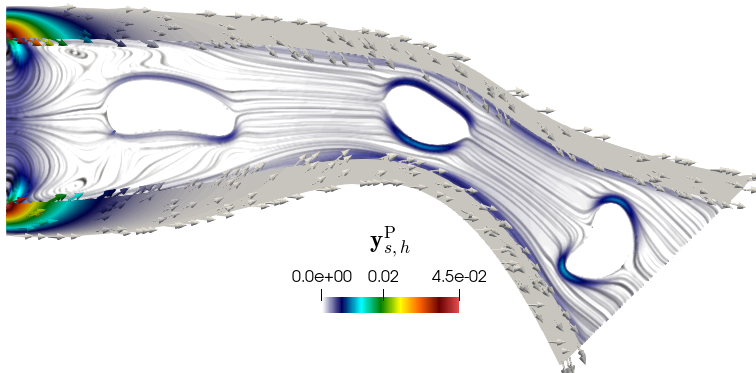}
  }
  \subfloat[Solid velocity]{%
    \includegraphics[width=0.49\textwidth]{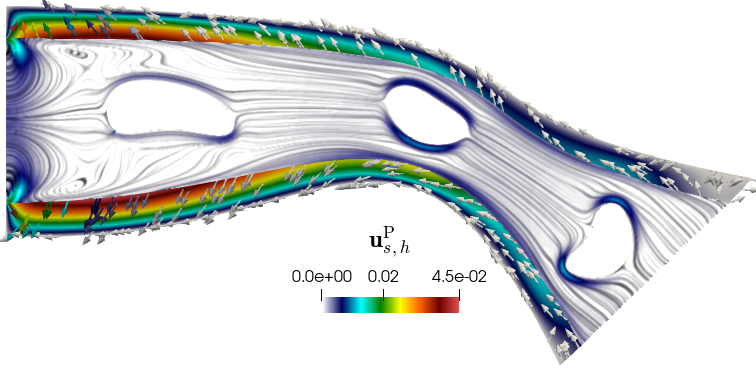}}
\caption{Coupled fluid–poroelastic simulation results:  
(a) free-flow velocity $\bu_{f,h}^\mathrm{S}$ and porous velocity $\bu_{r,h}^\mathrm{P}$ (streamlines and vectors);  
(b) profiles and iso-contours of pressures $p_{h}^\mathrm{S}$ (free flow) and $p_{h}^\mathrm{P}$ (poroelastic medium);  
(c) magnitude of solid displacement $|\by_{s,h}^\mathrm{P}|$;  
(d) magnitude of solid velocity $|\bu_{s,h}^\mathrm{P}|$.  
Colorbars indicate magnitudes; streamlines and vectors depict flow or solid motion.}
  \label{fig:coupled_flow_results}
\end{figure}

Figure~\ref{fig:coupled_flow_results}(a) shows velocity streamlines and vectors across the entire domain for both the fluid velocity $\boldsymbol{u}_{f,h}^\mathrm{S}$ and the porous velocity $\bu_{r,h}^\mathrm{P}$. The free flow accelerates along the main channel and forms recirculation zones behind the obstacles. Higher pore-fluid velocities occur near obstacle boundaries and along the interface, indicating regions of enhanced exchange between the free fluid and the porous medium. Figure~\ref{fig:coupled_flow_results}(b) shows the pressures $p_{h}^\mathrm{S}$ in the free-flow region and $p_{h}^\mathrm{P}$ in the poroelastic medium. The pressure varies along the channel, with a clear difference across the fluid–poroelastic interface caused by the resistance of the porous matrix. This pressure difference drives fluid exchange across the interface and influences the motion of the solid skeleton.  Figure~\ref{fig:coupled_flow_results}(c) shows  the solid displacement magnitude $|\boldsymbol{y}_{s,h}^\mathrm{P}|$ in the poroelastic layer. The largest displacements occur near the fluid–solid interface and behind the obstacles, indicating how the solid deforms in response to the flow. Figure~\ref{fig:coupled_flow_results}(d) shows the poroelastic solid velocity magnitude $|\bu_{s,h}^\mathrm{P}|$. Most solid motion occurs near the interface and along obstacle boundaries, corresponding to areas of high pressure gradient and strong pore-fluid flow. This indicates strong coupling between the fluid and the solid.
\subsection{3D simulation of the blood flow through a microfluidic chip with cylindrical poroelastic obstacles}
To demonstrate that our numerical method is both stable and accurate, we perform a 3D simulation of fluid flow inside a microfluidic chip \cite{MR4942764}, with a slight modification to the radius of the obstacles. The chip dimensions are $2\,\text{cm} \times 4.2 \, \text{cm} \times 0.5 \,\text{cm} $. On the left side, there is one inlet for blood to enter, and on the right side, there are two outlets where blood leaves. Similarly, there is a single inlet on the left for water to flow in and two outlets on the right for water to flow out. Inside the chip, ten pillars made of poroelastic materials (such as hydrogels or polymer composites) are arranged in specific positions. Six of these pillars have a radius of 0.2 cm, and the remaining four have a radius of 0.15 cm. The parameters are defined as $\mu_f = 0.01$, $\rho_{f} = 1.0$, $\rho_{p} = 1.2$, $\mu_{p} = 1.0336 \times 10^{3}$, $\lambda_{p} = 4.9364 \times 10^{4}$, $\kappa = 1 \times 10^{-3}$, $K = 1 \times 10^{6}$, $\theta = 0$, $\alpha_{\mathrm{BJS}} = 1$, $\phi = 0.3$, and $\gamma = 30$, and their units are in the CGS system. This problem was solved with a time step of $\tau = 10^{-3}$ and a final time of $T = 1$. The boundary conditions are as follows: on the inlet,  
\[
\bu_f^{\mathrm{S}} \;=\; \bu_{\mathrm{in}}
\;=\;
\biggl(\frac{20\,(z - 0.5)\,(1.5 - z)\,z\,(0.7 - z)}{0.49},\,0,\,0\biggr).
\]
On the lateral walls, the channel top, and the channel bottom (excluding the pillar hole), 
\(\bu_f^{\mathrm{S}} = \mathbf{0}\). On the outlet, 
\(\bsigma_{f}^\mathrm{S}\,\mathbf{n}_{\mathrm{S}} = \mathbf{0}\). Finally, on the top and bottom of the cylinder, 
\(\by_s^{\mathrm{P}} = \bu_r^{\mathrm{P}} = \mathbf{0}\). We report in Figure~\ref{fig:3d} the numerical results, showing the expected behavior of flow through deformable obstacles.

\begin{figure}[t!]
    \centering
    \includegraphics[width=0.55\linewidth]{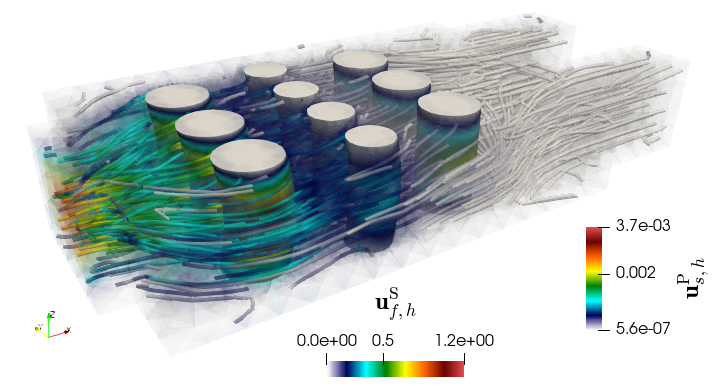} \\
    \includegraphics[width=0.55\linewidth]{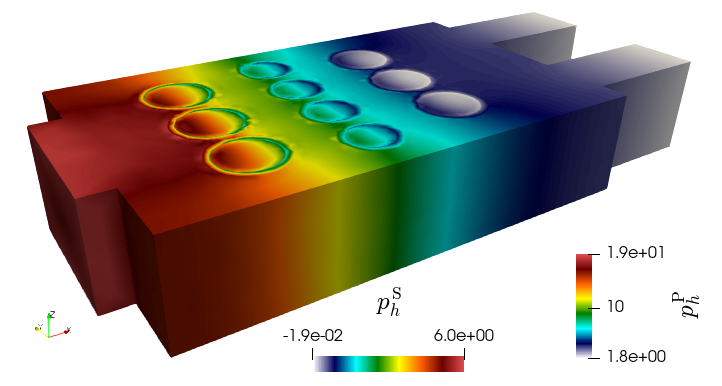} \\
    \includegraphics[width=0.55\linewidth]{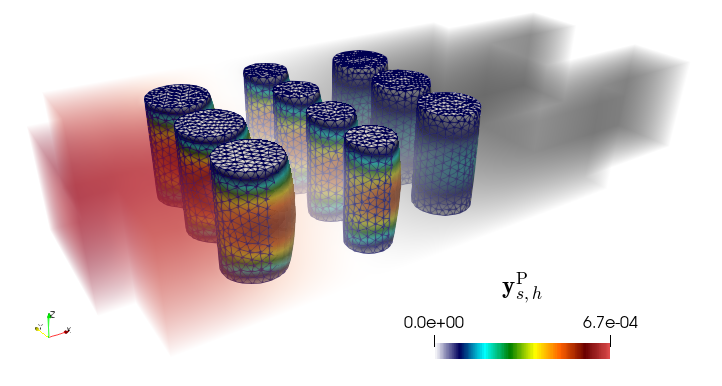}
    \caption{Velocity streamlines (left), pressure profiles (center), and displacement magnitude associated with flow through a microfluidic chip.}
    \label{fig:3d}
\end{figure}
Figure~\ref{fig:3d}(a) displays the blood-flow streamlines around the poroelastic pillars. Higher fluid velocities, represented by yellow/red colors, are observed near the inlet and in less obstructed regions, while lower velocities occur near the pillar surfaces due to flow resistance. The contours on the pillars represent the structural velocity, $\bu^\mathrm{P}_{s,h}$. Figure~\ref{fig:3d}(b) shows the pressure distribution in the fluid and porous regions. The fluid pressure, $p_h^\mathrm{S}$, decreases significantly from the inlet to the outlets, with large pressure gradients near the pillar boundaries. The pressure inside the pillars corresponds to the porous pressure, $p_h^\mathrm{P}$. Figure~\ref{fig:3d}(c) illustrates the magnitude of the solid displacement, $\by^\mathrm{P}_{s,h}$. The largest deformation, approximately $6.7 \times 10^{-4}\,\mathrm{m}$, occurs in the smaller pillars located in high-velocity regions, highlighting the coupling between the fluid flow and the poroelastic response.

\section{Conclusion}~\label{sec:concl}
A Nitsche-based formulation is proposed for the Navier--Stokes/generalized poroelasticity model, and the well-posedness of the discrete formulations is proved using DAE theory and Banach fixed point theorem. A priori error estimates for the fully discrete schemes are derived. Finally, we conduct a series of numerical experiments to validate the theoretical findings on spatio-temporal convergence. Specifically, we present a two-dimensional simulation of flow in a channel containing obstacles between a porous substrate, and a three-dimensional simulation of blood flow through a microfluidic device featuring cylindrical poroelastic obstacles. Further perspectives of this work include the extension to the fully nonlinear regime, as well as other types of transmission conditions that would allow for greater generality in the types of poromechanical problems we can tackle. 

\section*{Declarations}
\noindent\textbf{Acknowledgments.} 
AB was supported by the Ministry of Education, Government of India - MHRD. NB received support from Centro de Modelamiento Matematico (CMM), Proyecto Basal FB210005. RRB received  support from the Australian Research Council through the \textsc{Future Fellowship} grant FT220100496 and \textsc{Discovery Project} grant DP22010316, and from the Center of Advanced Study (CAS) at the Norwegian Academy of Science and Letters under the program \textsc{Mathematical Challenges in Brain Mechanics}.
\par\smallskip
\noindent
{\bf Data availability.} Data generated during the research discussed in the paper will be made available upon reasonable request.
\par\smallskip
\noindent
{\bf Conflict of interest statement}.  The Authors declare that they have no conflict of interest.
\bibliographystyle{siam}
\bibliography{references}

@article {MR2391227,
    AUTHOR = {Galvis, Juan and Sarkis, Marcus},
     TITLE = {Non-matching mortar discretization analysis for the coupling
              {S}tokes-{D}arcy equations},
   JOURNAL = {Electron. Trans. Numer. Anal.},
  FJOURNAL = {Electronic Transactions on Numerical Analysis},
    VOLUME = {26},
      YEAR = {2007},
     PAGES = {350--384},
   MRCLASS = {76M25 (65N55 76D07 76S05)},
  MRNUMBER = {2391227},
MRREVIEWER = {Steve Wright},
}

@book {MR2373954,
    AUTHOR = {Brenner, Susanne C. and Scott, L. Ridgway},
     TITLE = {The mathematical theory of finite element methods},
    SERIES = {Texts in Applied Mathematics},
    VOLUME = {15},
   EDITION = {Third},
 PUBLISHER = {Springer, New York},
      YEAR = {2008},
     PAGES = {xviii+397},
      ISBN = {978-0-387-75933-3},
   MRCLASS = {65-01 (65-02)},
  MRNUMBER = {2373954},
       DOI = {10.1007/978-0-387-75934-0},
       URL = {https://doi.org/10.1007/978-0-387-75934-0},
}

@article {MR2573342,
    AUTHOR = {Badia, Santiago and Quaini, Annalisa and Quarteroni, Alfio},
     TITLE = {Coupling {B}iot and {N}avier-{S}tokes equations for modelling
              fluid-poroelastic media interaction},
   JOURNAL = {J. Comput. Phys.},
  FJOURNAL = {Journal of Computational Physics},
    VOLUME = {228},
      YEAR = {2009},
    NUMBER = {21},
     PAGES = {7986--8014},
      ISSN = {0021-9991},
   MRCLASS = {65M60 (74F10 76D05 76S05)},
  MRNUMBER = {2573342},
       DOI = {10.1016/j.jcp.2009.07.019},
       URL = {https://doi.org/10.1016/j.jcp.2009.07.019},
}

@article {MR4353225,
    AUTHOR = {Ruiz-Baier, Ricardo and Taffetani, Matteo and Westermeyer,
              Hans D. and Yotov, Ivan},
     TITLE = {The {B}iot-{S}tokes coupling using total pressure:
              formulation, analysis and application to interfacial flow in
              the eye},
   JOURNAL = {Comput. Methods Appl. Mech. Engrg.},
  FJOURNAL = {Computer Methods in Applied Mechanics and Engineering},
    VOLUME = {389},
      YEAR = {2022},
     PAGES = {Paper No. 114384, 30},
      ISSN = {0045-7825},
   MRCLASS = {65M60 (65M12 74F10 76S05 92C35)},
  MRNUMBER = {4353225},
       DOI = {10.1016/j.cma.2021.114384},
       URL = {https://doi.org/10.1016/j.cma.2021.114384},
}

@article {MR2546594,
    AUTHOR = {Bazilevs, Yuri and Hsu, M.-C. and Benson, D. J. and Sankaran,
              S. and Marsden, A. L.},
     TITLE = {Computational fluid-structure interaction: methods and
              application to a total cavopulmonary connection},
   JOURNAL = {Comput. Mech.},
  FJOURNAL = {Computational Mechanics},
    VOLUME = {45},
      YEAR = {2009},
    NUMBER = {1},
     PAGES = {77--89},
      ISSN = {0178-7675},
   MRCLASS = {74F10 (74L15 76D05 76Z05 92C35)},
  MRNUMBER = {2546594},
       DOI = {10.1007/s00466-009-0419-y},
       URL = {https://doi.org/10.1007/s00466-009-0419-y},
}

@book {MR2536639,
     TITLE = {Fundamental trends in fluid-structure interaction},
    SERIES = {Contemporary Challenges in Mathematical Fluid Dynamics and Its
              Applications},
    VOLUME = {1},
    EDITOR = {Galdi, Giovanni P. and Rannacher, Rolf},
 PUBLISHER = {World Scientific Publishing Co. Pte. Ltd., Hackensack, NJ},
      YEAR = {2010},
     PAGES = {viii+293},
      ISBN = {978-981-4299-32-9; 981-4299-32-4},
   MRCLASS = {76-06 (74-06 74F10 76Z10)},
  MRNUMBER = {2536639},
       DOI = {10.1142/7675},
       URL = {https://doi.org/10.1142/7675},
}

@incollection {MR2149168:,
    AUTHOR = {Showalter, Ralph E.},
     TITLE = {Poroelastic filtration coupled to {S}tokes flow},
 BOOKTITLE = {Control theory of partial differential equations},
    SERIES = {Lect. Notes Pure Appl. Math.},
    VOLUME = {242},
     PAGES = {229--241},
 PUBLISHER = {Chapman \& Hall/CRC, Boca Raton, FL},
      YEAR = {2005},
   MRCLASS = {76S05 (35Q35 74F10 76D07)},
  MRNUMBER = {2149168},
       DOI = {10.1201/9781420028317.ch16},
       URL = {https://doi.org/10.1201/9781420028317.ch16},
}

@article {MR3343599,
    AUTHOR = {Buka\v{c}, Martina and Yotov, Ivan and Zunino, Paolo},
     TITLE = {An operator splitting approach for the interaction between a
              fluid and a multilayered poroelastic structure},
   JOURNAL = {Numer. Methods Partial Differential Equations},
  FJOURNAL = {Numerical Methods for Partial Differential Equations. An
              International Journal},
    VOLUME = {31},
      YEAR = {2015},
    NUMBER = {4},
     PAGES = {1054--1100},
      ISSN = {0749-159X},
   MRCLASS = {65M60 (74F10 76D05)},
  MRNUMBER = {3343599},
MRREVIEWER = {Fr\'{e}d\'{e}ric Charles Lebon},
       DOI = {10.1002/num.21936},
       URL = {https://doi.org/10.1002/num.21936},
}

@article {MR3347244,
    AUTHOR = {Buka\v{c}, M. and Yotov, I. and Zakerzadeh, R. and Zunino, P.},
     TITLE = {Partitioning strategies for the interaction of a fluid with a
              poroelastic material based on a {N}itsche's coupling approach},
   JOURNAL = {Comput. Methods Appl. Mech. Engrg.},
  FJOURNAL = {Computer Methods in Applied Mechanics and Engineering},
    VOLUME = {292},
      YEAR = {2015},
     PAGES = {138--170},
      ISSN = {0045-7825},
   MRCLASS = {76S05 (65M22 65M60 74F10 76D07)},
  MRNUMBER = {3347244},
MRREVIEWER = {Charis Harley},
       DOI = {10.1016/j.cma.2014.10.047},
       URL = {https://doi.org/10.1016/j.cma.2014.10.047},
}

@article {MR3851065,
    AUTHOR = {Ambartsumyan, Ilona and Khattatov, Eldar and Yotov, Ivan and
              Zunino, Paolo},
     TITLE = {A {L}agrange multiplier method for a {S}tokes-{B}iot
              fluid-poroelastic structure interaction model},
   JOURNAL = {Numer. Math.},
  FJOURNAL = {Numerische Mathematik},
    VOLUME = {140},
      YEAR = {2018},
    NUMBER = {2},
     PAGES = {513--553},
      ISSN = {0029-599X},
   MRCLASS = {65M60 (65M12 74F10 76D07 76S05)},
  MRNUMBER = {3851065},
MRREVIEWER = {Xiaobing Henry Feng},
       DOI = {10.1007/s00211-018-0967-1},
       URL = {https://doi.org/10.1007/s00211-018-0967-1},
}

@article {MR4022710,
    AUTHOR = {Ambartsumyan, Ilona and Ervin, Vincent J. and Nguyen, Truong
              and Yotov, Ivan},
     TITLE = {A nonlinear {S}tokes-{B}iot model for the interaction of a
              non-{N}ewtonian fluid with poroelastic media},
   JOURNAL = {ESAIM Math. Model. Numer. Anal.},
  FJOURNAL = {ESAIM. Mathematical Modelling and Numerical Analysis},
    VOLUME = {53},
      YEAR = {2019},
    NUMBER = {6},
     PAGES = {1915--1955},
      ISSN = {2822-7840},
   MRCLASS = {76S05 (35Q35 65M12 65M60 74F10 76D07)},
  MRNUMBER = {4022710},
MRREVIEWER = {Renata Bunoiu},
       DOI = {10.1051/m2an/2019061},
       URL = {https://doi.org/10.1051/m2an/2019061},
}

@article {MR3918635,
    AUTHOR = {Burtschell, B. and Moireau, P. and Chapelle, D.},
     TITLE = {Numerical analysis for an energy-stable total discretization
              of a poromechanics model with inf-sup stability},
   JOURNAL = {Acta Math. Appl. Sin. Engl. Ser.},
  FJOURNAL = {Acta Mathematicae Applicatae Sinica. English Series},
    VOLUME = {35},
      YEAR = {2019},
    NUMBER = {1},
     PAGES = {28--53},
      ISSN = {0168-9673},
   MRCLASS = {65M60 (65M12 74H15 76S05)},
  MRNUMBER = {3918635},
       DOI = {10.1007/s10255-019-0799-5},
       URL = {https://doi.org/10.1007/s10255-019-0799-5},
}

@book {MR3097958,
    AUTHOR = {Boffi, Daniele and Brezzi, Franco and Fortin, Michel},
     TITLE = {Mixed finite element methods and applications},
    SERIES = {Springer Series in Computational Mathematics},
    VOLUME = {44},
 PUBLISHER = {Springer, Heidelberg},
      YEAR = {2013},
     PAGES = {xiv+685},
      ISBN = {978-3-642-36518-8; 978-3-642-36519-5},
   MRCLASS = {65-02 (65M60 65N30)},
  MRNUMBER = {3097958},
MRREVIEWER = {Beny Neta},
       DOI = {10.1007/978-3-642-36519-5},
       URL = {https://doi.org/10.1007/978-3-642-36519-5},
}

@article {MR3974685,
    AUTHOR = {Kashiwabara, T. and Oikawa, I. and Zhou, G.},
     TITLE = {Penalty method with {C}rouzeix-{R}aviart approximation for the
              {S}tokes equations under slip boundary condition},
   JOURNAL = {ESAIM Math. Model. Numer. Anal.},
  FJOURNAL = {ESAIM. Mathematical Modelling and Numerical Analysis},
    VOLUME = {53},
      YEAR = {2019},
    NUMBER = {3},
     PAGES = {869--891},
      ISSN = {2822-7840},
   MRCLASS = {65N30 (35Q30 65D07)},
  MRNUMBER = {3974685},
       DOI = {10.1051/m2an/2019008},
       URL = {https://doi.org/10.1051/m2an/2019008},
}

@book {MR1299729,
    AUTHOR = {Quarteroni, Alfio and Valli, Alberto},
     TITLE = {Numerical approximation of partial differential equations},
    SERIES = {Springer Series in Computational Mathematics},
    VOLUME = {23},
 PUBLISHER = {Springer-Verlag, Berlin},
      YEAR = {1994},
     PAGES = {xvi+543},
      ISBN = {3-540-57111-6},
   MRCLASS = {65-01 (65Mxx 65Nxx 76D05 76Mxx)},
  MRNUMBER = {1299729},
MRREVIEWER = {Ji\v{r}\'{\i} Nedoma},
}

@article {MR2788393,
    AUTHOR = {Fern\'{a}ndez, Miguel A.},
     TITLE = {Incremental displacement-correction schemes for the explicit
              coupling of a thin structure with an incompressible fluid},
   JOURNAL = {C. R. Math. Acad. Sci. Paris},
  FJOURNAL = {Comptes Rendus Math\'{e}matique. Acad\'{e}mie des Sciences. Paris},
    VOLUME = {349},
      YEAR = {2011},
    NUMBER = {7-8},
     PAGES = {473--477},
      ISSN = {1631-073X},
   MRCLASS = {76D07 (65M60 74F10 74S05 76M10)},
  MRNUMBER = {2788393},
MRREVIEWER = {Shawn W. Walker},
       DOI = {10.1016/j.crma.2011.03.001},
       URL = {https://doi.org/10.1016/j.crma.2011.03.001},
}

@article {MR1011446,
    AUTHOR = {Scott, L. Ridgway and Zhang, Shangyou},
     TITLE = {Finite element interpolation of nonsmooth functions satisfying
              boundary conditions},
   JOURNAL = {Math. Comp.},
  FJOURNAL = {Mathematics of Computation},
    VOLUME = {54},
      YEAR = {1990},
    NUMBER = {190},
     PAGES = {483--493},
      ISSN = {0025-5718},
   MRCLASS = {65D05 (65N30)},
  MRNUMBER = {1011446},
MRREVIEWER = {Qian Li},
       DOI = {10.2307/2008497},
       URL = {https://doi.org/10.2307/2008497},
}

@article {MR3904522,
    AUTHOR = {Ambartsumyan, Ilona and Khattatov, Eldar and Nguyen, Truong
              and Yotov, Ivan},
     TITLE = {Flow and transport in fractured poroelastic media},
   JOURNAL = {GEM Int. J. Geomath.},
  FJOURNAL = {GEM. International Journal on Geomathematics},
    VOLUME = {10},
      YEAR = {2019},
    NUMBER = {1},
     PAGES = {Paper No. 11, 34},
      ISSN = {1869-2672},
   MRCLASS = {65M60 (65M12 74F10 76D07 76S05)},
  MRNUMBER = {3904522},
       DOI = {10.1007/s13137-019-0119-5},
       URL = {https://doi.org/10.1007/s13137-019-0119-5},
}

@book {MR1101809,
    AUTHOR = {Brenan, K. E. and Campbell, S. L. and Petzold, L. R.},
     TITLE = {Numerical solution of initial value problems in
              differential-algebraic equations},
 PUBLISHER = {North-Holland Publishing Co., New York},
      YEAR = {1989},
     PAGES = {x+210},
      ISBN = {0-444-01511-6},
   MRCLASS = {65-02 (34A09 34A12 34A50 65L05 65L06)},
  MRNUMBER = {1101809},
MRREVIEWER = {John P. Coleman},
}

@article {MR2655899,
    AUTHOR = {Cao, Yanzhao and Gunzburger, Max and Hua, Fei and Wang,
              Xiaoming},
     TITLE = {Coupled {S}tokes-{D}arcy model with {B}eavers-{J}oseph
              interface boundary condition},
   JOURNAL = {Commun. Math. Sci.},
  FJOURNAL = {Communications in Mathematical Sciences},
    VOLUME = {8},
      YEAR = {2010},
    NUMBER = {1},
     PAGES = {1--25},
      ISSN = {1539-6746},
   MRCLASS = {35Q35 (65M60 76D07 76S05)},
  MRNUMBER = {2655899},
       DOI = {10.4310/cms.2010.v8.n1.a2},
       URL = {https://doi.org/10.4310/cms.2010.v8.n1.a2},
}

@incollection {MR1936102,
    AUTHOR = {Discacciati, Marco and Miglio, Edie and Quarteroni, Alfio},
     TITLE = {Mathematical and numerical models for coupling surface and
              groundwater flows},
   JOURNAL = {Appl. Numer. Math.},
  FJOURNAL = {Applied Numerical Mathematics. An IMACS Journal},
    VOLUME = {43},
      YEAR = {2002},
    NUMBER = {1-2},
     PAGES = {57--74},
      ISSN = {0168-9274},
   MRCLASS = {76M10 (65M55 65M60 76D05 76S05)},
  MRNUMBER = {1936102},
       DOI = {10.1016/S0168-9274(02)00125-3},
       URL = {https://doi.org/10.1016/S0168-9274(02)00125-3},
}

@article {MR2813344,
    AUTHOR = {Gatica, Gabriel N. and Oyarz\'{u}a, Ricardo and Sayas,
              Francisco-Javier},
     TITLE = {Analysis of fully-mixed finite element methods for the
              {S}tokes-{D}arcy coupled problem},
   JOURNAL = {Math. Comp.},
  FJOURNAL = {Mathematics of Computation},
    VOLUME = {80},
      YEAR = {2011},
    NUMBER = {276},
     PAGES = {1911--1948},
      ISSN = {0025-5718},
   MRCLASS = {65N30 (65N12 65N15 76D07 76M10 76S05)},
  MRNUMBER = {2813344},
MRREVIEWER = {Christian Vergara},
       DOI = {10.1090/S0025-5718-2011-02466-X},
       URL = {https://doi.org/10.1090/S0025-5718-2011-02466-X},
}

@article {MR4301411,
    AUTHOR = {Seboldt, Anyastassia and Oyekole, Oyekola and Tamba\v{c}a, Josip
              and Buka\v{c}, Martina},
     TITLE = {Numerical modeling of the fluid-porohyperelastic structure
              interaction},
   JOURNAL = {SIAM J. Sci. Comput.},
  FJOURNAL = {SIAM Journal on Scientific Computing},
    VOLUME = {43},
      YEAR = {2021},
    NUMBER = {4},
     PAGES = {A2923--A2948},
      ISSN = {1064-8275},
   MRCLASS = {65M60 (65M12 76S05)},
  MRNUMBER = {4301411},
MRREVIEWER = {Guo-Dong Zhang},
       DOI = {10.1137/20M1386268},
       URL = {https://doi.org/10.1137/20M1386268},
}

@article {MR4768306,
    AUTHOR = {Kuan, Jeffrey and \v{C}ani\'{c}, Sun\v{c}ica and Muha, Boris},
     TITLE = {Fluid-poroviscoelastic structure interaction problem with
              nonlinear geometric coupling},
   JOURNAL = {J. Math. Pures Appl. (9)},
  FJOURNAL = {Journal de Math\'{e}matiques Pures et Appliqu\'{e}es. Neuvi\`eme S\'{e}rie},
    VOLUME = {188},
      YEAR = {2024},
     PAGES = {345--445},
      ISSN = {0021-7824},
   MRCLASS = {74F10 (35Q30 35R35 74H20)},
  MRNUMBER = {4768306},
MRREVIEWER = {Merab Svanadze},
       DOI = {10.1016/j.matpur.2024.06.004},
       URL = {https://doi.org/10.1016/j.matpur.2024.06.004},
}

@article {MR4598417,
    AUTHOR = {Cesmelioglu, Aycil and Lee, Jeonghun J. and Rhebergen, Sander},
     TITLE = {Hybridizable discontinuous {G}alerkin methods for the coupled
              {S}tokes-{B}iot problem},
   JOURNAL = {Comput. Math. Appl.},
  FJOURNAL = {Computers \& Mathematics with Applications. An International
              Journal},
    VOLUME = {144},
      YEAR = {2023},
     PAGES = {12--33},
      ISSN = {0898-1221},
   MRCLASS = {65M60 (35Q35 76S05)},
  MRNUMBER = {4598417},
       DOI = {10.1016/j.camwa.2023.05.024},
       URL = {https://doi.org/10.1016/j.camwa.2023.05.024},
}

@article {MR4089796,
    AUTHOR = {Cesmelioglu, Aycil and Chidyagwai, Prince},
     TITLE = {Numerical analysis of the coupling of free fluid with a
              poroelastic material},
   JOURNAL = {Numer. Methods Partial Differential Equations},
  FJOURNAL = {Numerical Methods for Partial Differential Equations. An
              International Journal},
    VOLUME = {36},
      YEAR = {2020},
    NUMBER = {3},
     PAGES = {463--494},
      ISSN = {0749-159X},
   MRCLASS = {65M60 (65M12 74F10 76D07 76S05)},
  MRNUMBER = {4089796},
       DOI = {10.1002/num.22437},
       URL = {https://doi.org/10.1002/num.22437},
}

@article {MR4487559,
    AUTHOR = {Caucao, Sergio and Li, Tongtong and Yotov, Ivan},
     TITLE = {A multipoint stress-flux mixed finite element method for the
              {S}tokes-{B}iot model},
   JOURNAL = {Numer. Math.},
  FJOURNAL = {Numerische Mathematik},
    VOLUME = {152},
      YEAR = {2022},
    NUMBER = {2},
     PAGES = {411--473},
      ISSN = {0029-599X},
   MRCLASS = {65M60 (35M33 65M12 74F10)},
  MRNUMBER = {4487559},
MRREVIEWER = {Hao Dong},
       DOI = {10.1007/s00211-022-01310-2},
       URL = {https://doi.org/10.1007/s00211-022-01310-2},
}

@article {MR341903,
    AUTHOR = {Nitsche, J.},
     TITLE = {\"{U}ber ein {V}ariationsprinzip zur {L}\"{o}sung von
              {D}irichlet-{P}roblemen bei {V}erwendung von {T}eilr\"{a}umen, die
              keinen {R}andbedingungen unterworfen sind},
   JOURNAL = {Abh. Math. Sem. Univ. Hamburg},
  FJOURNAL = {Abhandlungen aus dem Mathematischen Seminar der Universit\"{a}t
              Hamburg},
    VOLUME = {36},
      YEAR = {1971},
     PAGES = {9--15},
      ISSN = {0025-5858},
   MRCLASS = {65N30},
  MRNUMBER = {341903},
MRREVIEWER = {I. Aganovi\'{c}},
       DOI = {10.1007/BF02995904},
       URL = {https://doi.org/10.1007/BF02995904},
}

@article {MR4253885,
    AUTHOR = {Barnafi, Nicol\'{a}s and Zunino, Paolo and Ded\`e, Luca and
              Quarteroni, Alfio},
     TITLE = {Mathematical analysis and numerical approximation of a general
              linearized poro-hyperelastic model},
   JOURNAL = {Comput. Math. Appl.},
  FJOURNAL = {Computers \& Mathematics with Applications. An International
              Journal},
    VOLUME = {91},
      YEAR = {2021},
     PAGES = {202--228},
      ISSN = {0898-1221},
   MRCLASS = {65M60 (65M12 65M15 74B20 74F10 76S05)},
  MRNUMBER = {4253885},
       DOI = {10.1016/j.camwa.2020.07.025},
       URL = {https://doi.org/10.1016/j.camwa.2020.07.025},
}

@article {MR2498525,
    AUTHOR = {Burman, Erik and Fern\'{a}ndez, Miguel A.},
     TITLE = {Stabilization of explicit coupling in fluid-structure
              interaction involving fluid incompressibility},
   JOURNAL = {Comput. Methods Appl. Mech. Engrg.},
  FJOURNAL = {Computer Methods in Applied Mechanics and Engineering},
    VOLUME = {198},
      YEAR = {2009},
    NUMBER = {5-8},
     PAGES = {766--784},
      ISSN = {0045-7825},
   MRCLASS = {74F10 (76D10)},
  MRNUMBER = {2498525},
       DOI = {10.1016/j.cma.2008.10.012},
       URL = {https://doi.org/10.1016/j.cma.2008.10.012},
}

@article {MR3160359,
    AUTHOR = {Burman, Erik and Fern\'{a}ndez, Miguel A.},
     TITLE = {Explicit strategies for incompressible fluid-structure
              interaction problems: {N}itsche type mortaring versus
              {R}obin-{R}obin coupling},
   JOURNAL = {Internat. J. Numer. Methods Engrg.},
  FJOURNAL = {International Journal for Numerical Methods in Engineering},
    VOLUME = {97},
      YEAR = {2014},
    NUMBER = {10},
     PAGES = {739--758},
      ISSN = {0029-5981},
   MRCLASS = {74F10 (65M60 74S05 76D07 76M10)},
  MRNUMBER = {3160359},
MRREVIEWER = {Xiaoqin Shen},
       DOI = {10.1002/nme.4607},
       URL = {https://doi.org/10.1002/nme.4607},
}

@article {MR2899252,
    AUTHOR = {D'Angelo, Carlo and Zunino, Paolo},
     TITLE = {Numerical approximation with {N}itsche's coupling of transient
              {S}tokes'/{D}arcy's flow problems applied to hemodynamics},
   JOURNAL = {Appl. Numer. Math.},
  FJOURNAL = {Applied Numerical Mathematics. An IMACS Journal},
    VOLUME = {62},
      YEAR = {2012},
    NUMBER = {4},
     PAGES = {378--395},
      ISSN = {0168-9274},
   MRCLASS = {65M12 (65M22 65M60 76Z05 92C35)},
  MRNUMBER = {2899252},
MRREVIEWER = {Gert Lube},
       DOI = {10.1016/j.apnum.2011.03.001},
       URL = {https://doi.org/10.1016/j.apnum.2011.03.001},
}

@article {MR4262392,
    AUTHOR = {Yu, Jiaping and Zhang, Yuhong},
     TITLE = {Nitsche's type stabilization for the fully mixed {N}avier-{S}tokes/{D}arcy problem},
   JOURNAL = {J. Appl. Anal. Comput.},
  FJOURNAL = {The Journal of Applied Analysis and Computation},
    VOLUME = {11},
      YEAR = {2021},
    NUMBER = {3},
     PAGES = {1481--1493},
      ISSN = {2156-907X},
   MRCLASS = {65N30 (65N12 65N15 76D05 76S05)},
  MRNUMBER = {4262392},
       DOI = {10.11948/20200249},
       URL = {https://doi.org/10.11948/20200249},
}

@article {MR4114096,
    AUTHOR = {Yu, Jiaping and Sun, Yizhong and Shi, Feng and Zheng, Haibiao},
     TITLE = {Nitsche's type stabilized finite element method for the fully
              mixed {S}tokes-{D}arcy problem with {B}eavers-{J}oseph
              conditions},
   JOURNAL = {Appl. Math. Lett.},
  FJOURNAL = {Applied Mathematics Letters. An International Journal of Rapid
              Publication},
    VOLUME = {110},
      YEAR = {2020},
     PAGES = {106588, 8},
      ISSN = {0893-9659},
   MRCLASS = {65N30 (65N12 76S05)},
  MRNUMBER = {4114096},
       DOI = {10.1016/j.aml.2020.106588},
       URL = {https://doi.org/10.1016/j.aml.2020.106588},
}

@article {MR4524684,
    AUTHOR = {Ge, Zhihao and Pang, Jin'ge and Cao, Jiwei},
     TITLE = {Multiphysics mixed finite element method with {N}itsche's
              technique for {S}tokes-poroelasticity problem},
   JOURNAL = {Numer. Methods Partial Differential Equations},
  FJOURNAL = {Numerical Methods for Partial Differential Equations. An
              International Journal},
    VOLUME = {39},
      YEAR = {2023},
    NUMBER = {1},
     PAGES = {544--576},
      ISSN = {0749-159X},
   MRCLASS = {65M60 (35Q30 65M12 76S05)},
  MRNUMBER = {4524684},
MRREVIEWER = {Martin Vohral\'{\i}k},
       DOI = {10.1002/num.22903},
       URL = {https://doi.org/10.1002/num.22903},
}

@article{badia2020gridap,
  title={Gridap: an extensible finite element toolbox in Julia},
  author={Badia, Santiago and Verdugo, Francesc},
  journal={Journal of Open Source Software},
  volume={5},
  number={52},
  pages={2520},
  year={2020}
}

@article {MR4942764,
    AUTHOR = {Guo, Shihan and Sun, Yizhong and Wang, Yifan and Yue, Xiaohe
              and Zheng, Haibiao},
     TITLE = {A {F}ully {P}arallelizable {L}oosely {C}oupled {S}cheme for
              {F}luid-{P}oroelastic {S}tructure {I}nteraction {P}roblems},
   JOURNAL = {SIAM J. Sci. Comput.},
  FJOURNAL = {SIAM Journal on Scientific Computing},
    VOLUME = {47},
      YEAR = {2025},
    NUMBER = {4},
     PAGES = {B951--B975},
      ISSN = {1064-8275},
   MRCLASS = {Prelim},
  MRNUMBER = {4942764},
       DOI = {10.1137/24M1695713},
       URL = {https://doi.org/10.1137/24M1695713},
}

@article{bansal2026lagrange,
  title={A Lagrange multiplier-based method for Stokes--generalized poroelasticity interface problems},
  author={Bansal, Aparna and Barnafi, Nicol{\'a}s A and Pandey, Dwijendra Narain and Ruiz-Baier, Ricardo},
  journal={IMA Journal of Numerical Analysis},
  pages={draf124},
  year={2026},
  publisher={Oxford University Press}
}

@article{bansal2024nitsche,
  title={Nitsche method for Navier--Stokes equations with slip boundary conditions: convergence analysis and VMS-LES stabilization},
  author={Bansal, Aparna and Barnafi, Nicol{\'a}s A and Pandey, Dwijendra Narain},
  journal={ESAIM: Mathematical Modelling and Numerical Analysis},
  volume={58},
  number={5},
  pages={2079--2115},
  year={2024},
  publisher={EDP Sciences}
}

\appendix 
\section{Proof of Theorem \ref{appendix_fully}}\label{error_analysis_fully_appendix} 
\begin{proof}
We introduce the errors for all variables and split them into approximation and discretization errors:
\begin{align*}
\boldsymbol{e}_f^n & :=\bu_f^{{\mathrm{S},n}}-\bu_{f, h}^{{\mathrm{S}},n}=(\bu_f^{{\mathrm{S}},n}-\bI_{f, h} \bu_f^{{\mathrm{S}},n})+(\bI_{f, h} \bu_f^{{\mathrm{S}},n}-\bu^{\mathrm{S},n}_{f,h}):=\bchi_{f,h}^n+\bphi_{f,h}^n, \\
\boldsymbol{e}_r^n & :=\bu_r^{\mathrm{P},n}-\bu_{r,h}^{\mathrm{P},n}=(\bu_r^{\mathrm{P},n}-\bPi_{r, h} \bu_r^{\mathrm{P},n})+(\bPi_{r, h} \bu_r^{\mathrm{P},n}-\bu_{r,h}^{\mathrm{P},n}):=\bchi_{r,h}^n+\bphi_{r,h}^n, \\
\boldsymbol{e}_y^n & :=\by_s^{{\mathrm{P}},n}-\by_{s, h}^{\mathrm{P},n}=(\by_s^{{\mathrm{P}},n}-\bS_{s, h} \by_s^{{\mathrm{P}},n})+(\bS_{s, h} \by_s^{{\mathrm{P}},n}-\by_{s, h}^{\mathrm{P},n}):=\bchi_{y,h}^n+\bphi_{y,h}^n, \\
\boldsymbol{e}_{s}^n & :=\bu_s^{\mathrm{P},n}-\bu_{s,h}^{\mathrm{P},n}=(\bu_s^{\mathrm{P},n}-\bQ_{s, h} \bu_s^{\mathrm{P},n})+(\bQ_{s, h} \bu_s^{\mathrm{P},n}-\bu_{s,h}^{\mathrm{P},n}):=\bchi_{s,h}^n+\bphi_{s,h}^n, \\
e_{f p}^n & :=p^{\mathrm{S},n}-p^{\mathrm{S}}_h=(p^{\mathrm{S},n}-Q_{f, h} p^{\mathrm{S},n})+(Q_{f, h} p^{\mathrm{S},n}-p^{\mathrm{S}, n}_h):=\chi_{f p,h}^n+\phi_{f p, h}^n, \\
e_{p p}^n & :=p^{\mathrm{P},n}-p^{\mathrm{P}, n}_h=(p^{\mathrm{P},n}-Q_{p, h} p^{\mathrm{P},n})+(Q_{p, h} p^{\mathrm{P},n}-p^{\mathrm{P},n}_h):=\chi_{p p,h}^n+\phi_{p p, h}^n .
\end{align*} 
Denote the time discretisation errors as $r_n(\phi) = d_{\tau} \phi - \partial_t \phi $, for $\phi \in \{ \bu_f^{\mathrm{S},n}, \bu_r^{\mathrm{P},n},\by_s^{\mathrm{P},n}, p^{\mathrm{P},n}, \bu_s^{\mathrm{P},n} \}$. Subtracting \eqref{fully} from \eqref{n_weak} and adding the resulting equations, we obtain the following error equation
\begin{align*}
& m_{\rho_f }(d_{\tau} \boldsymbol{e}_f^n, \bv_{f,h}^{\mathrm{S}}) + m_{\rho_f \phi}(d_{\tau} \boldsymbol{e}_r^n, \bw_{s,h}^{\mathrm{P}})  + m_{\rho_p}(d_{\tau}\boldsymbol{e}_s^n, \bw_{s,h}^{\mathrm{P}}) + m_{\rho_f \phi}(d_{\tau}\boldsymbol{e}_r^n, \bv_{r,h}^{\mathrm{P}})  + m_{\rho_f \phi}(d_{\tau}\boldsymbol{e}_s^n, \bv_{r,h}^{\mathrm{P}})  -m_{\rho_p}(d_{\tau} \boldsymbol{e}_y^n, \bv_{s,h}^{\mathrm{P}}) \\
&\quad  + m_{\rho_p}(\boldsymbol{e}_s^n, \bv_{s,h}^{\mathrm{P}})  
+ a_f^{\mathrm{S}}(\boldsymbol{e}^n_f, \bv_{f,h}^{\mathrm{S}}) + a_{f}^{\mathrm{P}}(\boldsymbol{e}_r^n, \bw_{s,h}^{\mathrm{P}})  + a_s^{\mathrm{P}}(\boldsymbol{e}_y^n, \bw_{s,h}^{\mathrm{P}}) + a_{f}^{\mathrm{P}}(\boldsymbol{e}_r^n, \bv_{r,h}^{\mathrm{P}}) + a_{f}^{\mathrm{P}}(d_{\tau} \boldsymbol{e}_y^n, \bv_{r,h}^{\mathrm{P}})  + a_{f}^{\mathrm{P}}(d_{\tau} \boldsymbol{e}_{y}^n, \bw_{s,h}^{\mathrm{P}})   \\
&\quad + b^{\mathrm{S}}(\bv_{f,h}^{\mathrm{S}}, e_{fp}^n)  + b_s^{\mathrm{P}}(\bw_{s,h}^{\mathrm{P}}, e^{pp}_h)  + b_f^{\mathrm{P}}(\bv_{r,h}^{\mathrm{P}}, e_{pp}^n) - m_{\theta}(\boldsymbol{e}_r^n, \bw_{s,h}^{\mathrm{P}}) - m_{\theta}( \boldsymbol{e}_s^n, \bw_{s,h}^{\mathrm{P}})  - m_{\theta}(\boldsymbol{e}_r^n, \bv_{r,h}^{\mathrm{P}}) - m_{\theta}( \boldsymbol{e}_{s}^n, \bv_{r,h}^{\mathrm{P}})\\ 
&\quad   + m_{\phi^2/\kappa}(\boldsymbol{e}_r^n, \bv_{r,h}^{\mathrm{P}})
+ b_{\Gamma}(\bv_{f,h}^{\mathrm{S}}, \bv_{r,h}^{\mathrm{P}}, \bw_{s,h}^{\mathrm{P}} ; \boldsymbol{e}^n_f, e_{fp}^{\mathrm{S}} ) - b_s^{\mathrm{P}}(d_{\tau} \boldsymbol{e}_{y}^n, q^{\mathrm{P}}_h)  - b_f^{\mathrm{P}}(\boldsymbol{e}_r^n, q^{\mathrm{P}}_h)  - b^{\mathrm{S}}(\boldsymbol{e}^n_f, q^{\mathrm{S}}_h)  
+ b_{\Gamma}( \boldsymbol{e}_f^n, \boldsymbol{e}_{r}^n, d_{\tau} \boldsymbol{e}_y^n ; \varsigma  \bv^{\mathrm{S}}_f, -q_h^{\mathrm{S}})   \\
& \quad   + a_{\mathrm{BJS}}(\boldsymbol{e}_{f}^{n}, d_{\tau} \boldsymbol{e}_y^{n} ; \bv_{f,h}^{\mathrm{S}}, \bw_{s,h}^{\mathrm{P}})  +c_{\Gamma}(\boldsymbol{e}_f^n, \boldsymbol{e}_r^n, d_{\tau} \boldsymbol{e}_y^n;  \bv_{f,h}^{\mathrm{S}}, \bv_{r,h}^{\mathrm{P}},  \bw_{s,h}^{\mathrm{P}}  )  +  b_{\mathrm{BJS}}(\boldsymbol{e}_{r}^{n}, \bv_{r,h}^{\mathrm{P}}) + ((1-\phi)^2 K^{-1} d_{\tau} e^n_{pp}, q^{\mathrm{P}}_h)_{\Omega_{\mathrm{P}}} \\ 
& \quad  + ( \bu_f^{\mathrm{S},n} \cdot \nabla  \bu_f^{\mathrm{S},n} -  \bu_{f,h}^{\mathrm{S},n-1} \cdot \nabla  \bu_{f,h}^{\mathrm{S},n},  \bv_{f,h}^{\mathrm{S},n} ) + \mathcal{E} = 0,
\end{align*}
where the discretisation error is given by 
\begin{align*}
    \mathcal{E} & = m_{\rho_f }(r_n(\bu_f^{\mathrm{S}}), \bv_{f,h}^{\mathrm{S}}) + m_{\rho_f \phi}(r_n(\bu_r^{\mathrm{P}}), \bw_{s,h}^{\mathrm{P}})  + m_{\rho_p}(r_n(\bu_{s}^{\mathrm{P}}), \bw_{s,h}^{\mathrm{P}})  + m_{\rho_f \phi}(r_n(\bu_r^{\mathrm{P}}),  \bv_{r,h}^{\mathrm{P}})  + m_{\rho_f \phi}(r_n(\bu_{s}^{\mathrm{P}}), \bv_{r,h}^{\mathrm{P}}) \\ 
    & \quad  + a_{f}^{\mathrm{P}}(r_n(\by_{s}^{\mathrm{P}}), \bv_{r,h}^{\mathrm{P}}) + ((1 - \phi)^2 K^{-1} r_n(p^{\mathrm{P}}), q_{h}^{\mathrm{P}})_{\Omega_{\mathrm{P}}} 
    - b_s^{\mathrm{P}}(r_n(\by_{s}^{\mathrm{P}}), q_h^{\mathrm{P}})   + a_{f}^{\mathrm{P}}(r_n(\by_{s}^{\mathrm{P}}),  \bw_{s,h}^{\mathrm{P}}) + a_{\mathrm{BJS}}(\boldsymbol{0}, r_n(\by_{s}^{\mathrm{P}}); \bv_{f,h}^{\mathrm{S}},  \bw_{s,h}^{\mathrm{P}})   \\ & \quad   + b_{\Gamma}(\cero, \cero, r_n(\by_{s}^{\mathrm{P}})  ; \bv_{f,h}^{\mathrm{S}}, 0 )  - b_{\Gamma}(\cero, \cero, r_n(\by_{s}^{\mathrm{P}}) ; \cero, q_h^{\mathrm{S}}  ) + c_{\Gamma}(\cero, \cero, r_n(\by_{s}^{\mathrm{P}});  \bv_{f,h}^{\mathrm{S}}, \bv_{r,h}^{\mathrm{P}}, \bw_{y,h}^{\mathrm{P}} ).
\end{align*}
Setting $\bv_{f,h}^{\mathrm{S}}= \bphi_{f,h}^n, \bv_{r,h}^{\mathrm{P}} = \bphi_{r,h}^n, \bw_{s,h}^{\mathrm{P}} = d_{\tau} \bphi_{y,h}^n, \bv_{s,h}^{\mathrm{P}}= \bphi_{s,h}^n, q^{\mathrm{S}}_h  = \phi_{fp,h}^n, \text{ and } q^{\mathrm{P}}_h = \phi_{pp,h}^n$, we conclude that the following terms simplify due to the properties of the projection operators \eqref{ees2}, \eqref{sp2} and \eqref{mfe1}:
\begin{gather*}
b^{\mathrm{S}}(\bchi_{f,h}^n, \phi_{f p, h}^n) =b_f^{\mathrm{P}}(\bchi_{r,h}^n, \phi_{p p, h}^n)= b_f^{\mathrm{P}}(\bphi_{r,h}^n, \chi_{pp,h}^n) =0, \\ 
((1-\phi)^2 K^{-1} d_{\tau} \chi_{p p,h}^n, \phi_{p p, h}^n) = 0.
\end{gather*}
Rearranging terms and using the results above, the error equation 
becomes 
\begin{align}\label{ne1}
&a_f^{\mathrm{S}}(\bphi_{f,h}^n,\bphi_{f,h}^n)+ a_{f}^{\mathrm{P}}(\bphi_{r,h}^n, d_{\tau} \bphi_{y,h}^n) +a_{f}^{\mathrm{P}}(\bphi_{r,h}^n, \bphi_{r,h}^n) + a_{f}^{\mathrm{P}}(d_{\tau} \bphi_{y,h}^n, \bphi_{r,h}^n) + a_{f}^{\mathrm{P}}(d_{\tau} \bphi_{y,h}^n, d_{\tau} \bphi_{y,h}^n) +a_s^{\mathrm{P}}(\bphi_{y,h}^n,d_{\tau} \bphi_{y,h}^n)  \nonumber \\
&\quad +a_{\mathrm{BJS}}(\bphi_{f,h}^n, d_{\tau} \bphi_{y,h}^n ; \bphi_{f,h}^n, d_{\tau} \bphi_{y,h}^n)   + b_{\Gamma}(\bphi_{f,h}^n, \bphi_{r,h}^n, d_{\tau} \bphi_{y,h}^n ; \bphi_{f,h}^n, 0 ) + b_{\Gamma}(\bphi_{f,h}^n, \bphi_{r,h}^n, d_{\tau} \bphi_{y,h}^n ; \varsigma  \bphi_{f,h}^n, 0)  \nonumber \\
&\quad+c_{\Gamma}(\bphi_{f,h}^n, \bphi_{r,h}^n, d_{\tau} \bphi_{y,h}^n ;  \bphi_{f,h}^n, \bphi_{r,h}^n, d_{\tau} \bphi_{y,h}^n  ) + m_{\rho_f \phi}(d_{\tau}\bphi_{r,h}^n, \bphi_{s,h}^n) + m_{\rho_p}(d_{\tau}\bphi_{s,h}^n,  \bphi_{s,h}^n) + m_{\rho_f \phi }(d_{\tau}\bphi_{r,h}^n, \bphi_{r,h}^n)\nonumber \\
&\quad  + m_{\rho_f \phi}(d_{\tau}\bphi_{s,h}^n, \bphi_{r,h}^n) - m_{\theta}(\bphi_{r,h}^n, \bphi_{r,h}^n)   + ((1-\phi)^2 K^{-1} d_{\tau} \phi_{pp,h}^n, \phi_{pp,h}^n)_{\Omega_{\mathrm{P}}} - m_{\theta}(\bphi_{r,h}^n, \bphi_{s,h}^n) - m_{\theta}(\bphi_{s,h}^n, \bphi_{s,h}^n) \nonumber \\
&\quad - m_{\theta}( \bphi_{s,h}^n, \bphi_{r,h}^n) + m_{\phi^2/\kappa}(\bphi_{r,h}^n, \bphi_{r,h}^n) + m_{\rho_f}(d_{\tau} \bphi_{f,h}^n, \bphi_{f,h}^n)  + b_{\mathrm{BJS}}(\bphi_{r,h}^n, \bphi_{r,h}^n)\nonumber \\
&= \mathcal{J}_1 + \mathcal{J}_2 + \mathcal{J}_3 + \mathcal{J}_4 + \mathcal{J}_5 +\mathcal{J}_6 +\mathcal{J}_7 +\mathcal{J}_8 + \mathcal{J}_9,
\end{align}
where the right-hand side terms are defined as follows
\begin{align*}
    \mathcal{J}_1  := & -a_f^{\mathrm{S}}(\bchi_{f,h}^n, \bphi_{f,h}^n)  + m_{\theta}(\bchi_{r,h}^n, \bphi_{r,h}^n) 
    + m_{\theta}(\bchi_{s,h}^n, \bphi_{r,h}^n) 
    - m_{\phi^2/\kappa}(\bchi_{r,h}^n, \bphi_{r,h}^n)  - m_{\rho_f \phi }(d_{\tau}\bchi_{r,h}^n, \bphi_{r,h}^n) 
    - m_{\rho_f \phi}(d_{\tau}\bchi_{s,h}^n, \bphi_{r,h}^n)   \\&
     - m_{\rho_f}(d_{\tau} \bchi_{f,h}^n, \phi_{f,h}^n)
      - m_{\rho_f \phi}(r_n(\bu_r^{\mathrm{P}}), \bphi_{r,h}^n)  - m_{\rho_f \phi}(r_n(\bu_{s}^{\mathrm{P}}), \bphi_{r,h}^n)  - m_{\rho_f}(r_n(\bu_f^{\mathrm{S}}), \phi_{f,h}^n), \\ 
\mathcal{J}_2 := & - a_{f}^{\mathrm{P}}(\bchi_{r,h}^n, \bphi_{r,h}^n)  - a_{f}^{\mathrm{P}}(d_{\tau} \bchi_{y,h}^n, \bphi_{r,h}^n) - a_{f}^{\mathrm{P}}(r_n(\by_{s}^{\mathrm{P}}), \bphi_{r,h}^n), \\ 
    \mathcal{J}_3 := & -\sum_{j=1}^{d-1}\left\langle\mu_f \alpha_{\mathrm{BJS}} \sqrt{Z_j^{-1}}(\bchi_{f,h}^n-d_{\tau} \bchi_{y,h}^n) \cdot \btau_{f, j},(\bphi_{f,h}^n-d_{\tau} \bphi_{y,h}^n) \cdot \btau_{f, j}\right\rangle_{\Sigma} \\ &  - ((1 - \phi)^2 K^{-1} r_n(p^{\mathrm{P}}), \phi_{pp,h}^n)_{\Omega_{\mathrm{P}}}   -\sum_{j=1}^{d-1}\bigl\langle\mu_f \alpha_{\mathrm{BJS}} \sqrt{Z_j^{-1}}\bchi_{r,h}^n \cdot \btau_{f, j}, \bphi_{r,h}^n \cdot \btau_{f, j}\bigr\rangle_{\Sigma} , \\
    \mathcal{J}_4 := & -b^{\mathrm{S}}(\bphi_{f,h}^n, \chi_{fp,h}^n) 
    + b_s^{\mathrm{P}}(d_{\tau} \bchi_{y,h}^n, \phi_{pp,h}^n) + b_s^{\mathrm{P}}(r_n(\by_{s}^{\mathrm{P}}), \phi_{pp,h}^n) , \\  
    \mathcal{J}_5 := & - a_{f}^{\mathrm{P}}(\bchi_{r,h}^n,   d_{\tau} \bphi_{y,h}^n) 
    - a_s^{\mathrm{P}}(\bchi_{y,h}^n,d_{\tau} \bphi_{y,h}^n) 
    - a_{f}^{\mathrm{P}}(d_{\tau} \bchi_{y,h}^n,   d_{\tau} \bphi_{y,h}^n)  + m_{\theta}(\bchi_{r,h}^n, d_{\tau} \bphi_{y,h}^n) 
     - m_{\rho_p}(d_{\tau}\bchi_{y,h}^n, d_{\tau} \bphi_{y,h}^n)  
     \\& + m_{\theta}(\bchi_{s,h}^n, d_{\tau} \bphi_{y,h}^n)  - b_s^{\mathrm{P}}(d_{\tau} \bphi_{y,h}^n, \chi_{pp,h}^n) 
    - m_{\rho_f \phi}(d_{\tau}\bchi_{r,h}^n, d_{\tau} \bphi_{y,h}^n)    - a_{f}^{\mathrm{P}}(r_n(\by_{s}^{\mathrm{P}}), d_{\tau} \bphi_{y,h}^n) \\ &  - a_{\mathrm{BJS}}(\boldsymbol{0}, r_n(\by_{s}^{\mathrm{P}}); \bphi_{f,h}^n, d_{\tau} \bphi_{y,h}^n)   - m_{\rho_f \phi}(r_n(\bu_r^{\mathrm{P}}), d_{\tau} \bphi_{y,h}^n)   - m_{\rho_p}(r_n(\bu_{s}^{\mathrm{P}}), d_{\tau} \bphi_{y,h}^n)  - a_{f}^{\mathrm{P}}( r_n (\by_{s}^{\mathrm{P}})),   d_{\tau} \bphi_{y,h}^n), \\ 
    \mathcal{J}_6 := & - b_{\Gamma}(\bchi_{f,h}^n, \bchi_{r,h}^n, d_{\tau} \bchi_{y,h}^n ; \bphi_{f,h}^n, 0 )  
    - b_{\Gamma}(\bphi_{f,h}^n, \bphi_{r,h}^n, d_{\tau} \bphi_{y,h}^n ; \varsigma \bchi_{f,h}^n, 0) - b_{\Gamma}(\cero, \cero, r_n(\by_{s}^{\mathrm{P}}) ; \bphi_{f,h}^n, 0 ) ,\\ 
    \mathcal{J}_7 := & - b_{\Gamma}(\bchi_{f,h}^n, \bchi_{r,h}^n, d_{\tau} \bchi_{y,h}^n ; \cero, \phi_{fp,h}^n )  
    - b_{\Gamma}(\bphi_{f,h}^n, \bphi_{r,h}^n, d_{\tau} \bphi_{y,h}^n ; \cero, -\chi_{fp,h}^n ) - b_{\Gamma}(\cero, \cero, r_n(\by_{s}^{\mathrm{P}}) ; \cero, \phi_{fp,h}^n  ),  \\ 
    \mathcal{J}_8 := & - c_{\Gamma}(\bchi_{f,h}^n, \bchi_{r,h}^n, d_{\tau} \bchi_{y,h}^n ;  \bphi_{f,h}^n, \bphi_{r,h}^n, d_{\tau} \bphi_{y,h}^n  ) - c_{\Gamma}(\cero, \cero, r_n(\by_{s}^{\mathrm{P}});  \bphi_{f,h}^n, \bphi_{r,h}^n, d_{\tau} \bphi_{y,h}^n  ), \\
  \mathcal{J}_9    := & - ( \bu_f^{\mathrm{S},n} \cdot \nabla  \bu_f^{\mathrm{S},n} -  \bu_{f,h}^{\mathrm{S},n-1} \cdot \nabla  \bu_{f,h}^{\mathrm{S},n},  \bphi_{f,h}^n).
\end{align*}
We employ the inequality~\eqref{ineq:aux} along with the estimate
\begin{equation}\label{ineq:velocity_estimate}
\rho_f\,\phi\,\bigl\lVert \bphi_{r,h}^n + \bphi_{s,h}^n \bigr\rVert_{0,\Omega_{\mathrm{P}}}^2
\geq \rho_f\,\phi\,\left(\tfrac{1}{2}\bigl\lVert \bphi_{r,h}^n \bigr\rVert_{0,\Omega_{\mathrm{P}}}^2 
- \bigl\lVert \bphi_{s,h}^n \bigr\rVert_{0,\Omega_{\mathrm{P}}}^2\right),
\end{equation}
and apply the coercivity properties of the bilinear forms, together with the trace inequality, Hölder’s inequality, and Young’s inequality, to estimate the $\mathrm{LHS}$ of \eqref{ne1}. Furthermore, the terms $\mathcal{J}_1$ through $\mathcal{J}_8$ are estimated analogously, following the approach presented in~\cite{bansal2026lagrange}.

For the nonlinear error term, we have
\begin{align*}
& \bu_f^{\mathrm{S},n} \cdot \bnabla \bu_f^{\mathrm{S},n} - \bu_{f,h}^{\mathrm{S},n-1} \cdot \bnabla \bu_{f,h}^{\mathrm{S},n} 
= \mathcal{S}(\bu_f^{\mathrm{S},n}) \cdot \bnabla \bu_f^{\mathrm{S},n}
+ \bu_f^{\mathrm{S},n-1} \cdot \nabla (\bchi_{f,h}^{n} + \bphi_{f,h}^{n})
+ (\bchi_{f,h}^{n-1} + \bphi_{f,h}^{n-1}) \cdot \bnabla \bu_{f,h}^{\mathrm{S},n},
\end{align*}
where $\mathcal{S}(\bu_f^{\mathrm{S},n}) = \bu_f^{\mathrm{S},n} - \bu_f^{\mathrm{S},n-1}$. Then, using Sobolev and Korn's inequalities, and the assumption
$\| \bu_{f,h}^{\mathrm{S},n} \|_{1,\Omega_{\mathrm{S}}}$ $ 
< \frac{\mu_f}{2 S_f^2 K_f^3}, \quad 1 \leq n \leq N,$
we get
\begin{align*}
 \tau \sum_{n=1}^N \mathcal{J}_9 & : =  -  \tau \sum_{n=1}^N (\bu_f^{\mathrm{S},n} \cdot \bnabla \bu_f^{\mathrm{S},n} - \bu_{f,h}^{\mathrm{S},n-1} \cdot \bnabla \bu_{f,h}^{\mathrm{S},n}, \bphi_{f,h}^{n})_{\Omega_f}\\
&\leq   \tau \sum_{n=1}^N \left( \| \mathcal{S}(\bu_f^{\mathrm{S},n}) \|_{0,4,\Omega_\mathrm{S}} \| \bnabla \bu_f^{\mathrm{S},n} \|_{0,\Omega_\mathrm{S}} \| \bphi_{f,h}^{n} \|_{0,4,\Omega_\mathrm{S}} + \| \bu_f^{\mathrm{S},n-1} \|_{0,4,\Omega_\mathrm{S}} \| \bnabla \bchi_{f,h}^{n} \|_{0,\Omega_\mathrm{S}}   \| \bphi_{f,h}^{n} \|_{0,4,\Omega_\mathrm{S}}   \right. \\& \quad  \left.
+  \| \bu_f^{\mathrm{S},n-1} \|_{0,4,\Omega_\mathrm{S}} \| \bnabla \bphi_{f,h}^{n} \|_{0,\Omega_\mathrm{S}}  \| \bphi_{f,h}^{n} \|_{0,4,\Omega_\mathrm{S}}  +  \| \bchi_f^{n-1} \|_{0,4,\Omega_\mathrm{S}} \| \bnabla \bu_{f,h}^{\mathrm{S},n} \|_{0,\Omega_\mathrm{S}}  \| \bphi_{f,h}^{n} \|_{0,4,\Omega_\mathrm{S}}   
 \right. \\& \left. \quad + \| \bphi_{f,h}^{n-1} \|_{0,4,\Omega_\mathrm{S}} \| \bnabla \bu_{f,h}^{\mathrm{S},n} \|_{0,\Omega_\mathrm{S}}\| \bphi_{f,h}^{n} \|_{0,4,\Omega_\mathrm{S}} \right) \\
&\leq  \tau \sum_{n=1}^N \left( \frac{\mu_f}{2} \| \mathcal{S}( \bnabla \bu_f^{\mathrm{S},n}) \|_{0,\Omega_\mathrm{S}} \| \bnabla \bphi_{f,h}^{n} \|_{0,\Omega_\mathrm{S}}
+ \frac{\mu_f}{2} \| \bnabla \bchi_{f,h}^{n} \|_{0,\Omega_\mathrm{S}} \| \bnabla \bphi_{f,h}^{n} \|_{0,\Omega_\mathrm{S}} \right. \\& \left. + \frac{\mu_f}{2}  \| \bnabla \bphi_{f,h}^{n} \|_{0,\Omega_\mathrm{S}}^2   
+ \frac{\mu_f}{2} \| \bnabla \bchi_f^{n-1} \|_{0,\Omega_\mathrm{S}} \| \bnabla \bphi_{f,h}^{n} \|_{0,\Omega_\mathrm{S}}  + \frac{\mu_f}{2} \| \bnabla \bphi_{f,h}^{n-1} \|_{0,\Omega_\mathrm{S}}   \| \bnabla \bphi_{f,h}^{n} \|_{0,\Omega_\mathrm{S}} \right)   \\
&\leq \mu_f \tau \sum_{n=1}^N \| \bnabla \bphi_{f,h}^{n} \|_{0,\Omega_\mathrm{S}}^2
+  \tau \sum_{n=1}^N \frac{\mu_f}{4} \| \bnabla \bphi_f^{n-1} \|_{0,\Omega_\mathrm{S}}^2
+ \frac{3 \mu_f }{4}  \tau \sum_{n=1}^N   ( \| \bnabla \mathcal{S}(\bu_f^{\mathrm{S},n} )\|_{0,\Omega_\mathrm{S}}^2
 + \| \bnabla \bchi_{f,h}^{n}  \|_{0,\Omega_\mathrm{S}}^2 + \| \bnabla \bchi_{f,h}^{n-1} \|_{0,\Omega_\mathrm{S}}^2 ). 
\end{align*}
By substituting the bounds for $\mathcal{J}_1$--$\mathcal{J}_9$ into~\eqref{ne1} and choosing $\epsilon_1$ sufficiently small, we obtain the desired bound. 
Next, we apply the inf--sup condition~\eqref{inf_sup} with the choice 
$(q_h^{\mathrm{S},n}, q_h^{\mathrm{P},n}) = (\phi_{fp,h}^n,\; \phi_{pp,h}^n),$
and use the error equation obtained by subtracting~\eqref{fully} from~\eqref{n_weak}. 
The analysis follows the approach in~\cite{bansal2026lagrange}, with the additional approximation property
$\tau \sum_{n=1}^N \| \mathcal{S}(\bu_f^{\mathrm{S},n}) \|_{1,\Omega_\mathrm{S}}^2 
\leq C \tau^2 \| \partial_t \bu_f^{\mathrm{S}} \|_{L^2(0, T; H^1(\Omega_{\mathrm{S}}))}^2.$
\cred{This yields 
\begin{align*}
 & \|\bphi_{f,h}^N\|^2_{0,\Omega_{\mathrm{S}}} +  \|\bphi_{r,h}^N\|^2_{0,\Omega_{\mathrm{P}}} + \|\phi_{pp,h}^N\|_{0,\Omega_{\mathrm{P}}}^2  + \sum_{n=1}^N \tau^2 \Big( \|d_{\tau} \bphi_{r,h}^n\|_{0,\Omega_{\mathrm{P}}}^2 + \|d_{\tau} \phi_{pp,h}^n\|_{0,\Omega_{\mathrm{P}}}^2    + \|d_{\tau} \bphi_{y,h}^n\|_{1,\Omega_{\mathrm{P}}}^2   + \|d_{\tau} \bphi_{s,h}^n\|_{0,\Omega_{\mathrm{P}}}^2  \Big) \\
 & \quad + \|\bphi_{y,h}^N\|_{1,\Omega_{\mathrm{P}}}^2 + \|\bphi_{s,h}^N\|_{0,\Omega_{\mathrm{P}}}^2  + \sum_{n=1}^N \tau \biggl( \left| \bphi_{f,h}^n - d_{\tau} \bphi_{y,h}^n \right|_{\mathrm{BJS}}^2 + \left| \bphi_{r,h}^n \right|_{\mathrm{BJS}}^2 + \|\bphi_{r,h}^n\|_{0,\Omega_{\mathrm{P}}}^2  + \frac{( 1 -  (1+ \varsigma)  \widehat{\epsilon_f}^{\prime} C_{\mathrm{tr}} )}{2}   \|\bphi_{f,h}\|_{1,\Omega_{\mathrm{S}}}^2   \\
 &\quad + ( \gamma -  (1+ \varsigma) (\widehat{\epsilon_f}^{\prime} )^{-1} ) \sum_{E \in \mathcal{E}_{\Sigma}} \frac{\mu_f}{h_E} 
\|\bphi_{f,h} \cdot \bn_{\mathrm{S}} + \bphi_{r,h} \cdot \bn_{\mathrm{P}}  + d_{\tau} \bphi_{y,h} \cdot \bn_{\mathrm{S}} \|_{0, E}^2 \biggr)  \\
&\lesssim 
\frac{\tau}{\epsilon_1} \sum_{n=1}^N \bigl( (1 + \varsigma) \|\bchi_{f,h}^n\|_{1,\Omega_{\mathrm{S}}}^2 + \|\bchi_{f,h}^n\|_{0,\Omega_{\mathrm{S}}}^2 + \|\bchi_{r,h}^n\|_{0,\Omega_{\mathrm{P}}}^2 + \|d_{\tau} \bchi_{s,h}^n\|_{0,\Omega_{\mathrm{P}}}^2  + \|d_{\tau} \bchi_{r,h}^n\|_{0,\Omega_{\mathrm{P}}}^2  + \|\bchi_{s,h}^n\|_{0,\Omega_{\mathrm{P}}}^2 + \|d_{\tau} \bchi_{y,h}^n\|_{1,\Omega_{\mathrm{P}}}^2   \\ 
& \quad   + (1 + \varsigma) \|\chi_{f p,h}^n\|_{0,\Omega_{\mathrm{S}}}^2   + h^{-2} \|d_{\tau} \bchi_{y,h}^n\|_{1,\Omega_{\mathrm{P}}}^2   + h^{-2} \|\bchi_{r,h}^n\|_{1,\Omega_{\mathrm{P}}}^2 + \|\bchi_{r,h}^n\|_{1,\Omega_{\mathrm{P}}}^2  + \|r_n(\bu_{s}^{\mathrm{P}})\|_{0,\Omega_{\mathrm{P}}}^2   + \|r_n(\bu_{r}^{\mathrm{P}})\|_{0,\Omega_{\mathrm{P}}}^2    \\ 
& \quad + \|r_n(\bu_{f}^{\mathrm{S}})\|_{0,\Omega_{\mathrm{S}}}^2 
+ \|r_n(\by_{s}^{\mathrm{P}})\|_{1,\Omega_{\mathrm{P}}}^2 + \|r_n(p^{\mathrm{P}})\|_{0,\Omega_{\mathrm{P}}}^2   + h^{-2} \|r_n(\by_{s}^{\mathrm{P}})\|_{1,\Omega_{\mathrm{P}}}^2 \bigr)  + {\epsilon_1}\|\bphi_{y,h}^N\|_{1,\Omega_{\mathrm{P}}}^2    +  {\tau}\sum_{n=1}^{N-1}\|\bphi_{y,h}^n\|_{1,\Omega_{\mathrm{P}}}^2    \\
&\quad  + \epsilon_1 \tau \sum_{n=1}^N   \biggl(  \|\bphi_{f,h}^n\|_{0,\Omega_{\mathrm{S}}}^2   +  \|\bphi_{f,h}^n\|_{1,\Omega_{\mathrm{S}}}^2    + \|\bphi_{fp,h}^n\|_{0,\Omega_{\mathrm{S}}}^2   + \|\bphi_{r,h}^n\|_{0,\Omega_{\mathrm{P}}}^2  + \|\phi_{pp,h}^n\|_{0,\Omega_{\mathrm{P}}}^2  + \left|\bphi_{f,h}^n - d_{\tau} \bphi_{y,h}^n\right|_{\mathrm{BJS}}^2 + \left|\bphi_{r,h}^n \right|_{\mathrm{BJS}}^2  \biggr)   \\ 
&  \quad   + \epsilon_1^{-1} \bigl( \|\bchi_{y,h}^N\|_{1,\Omega_{\mathrm{P}}}^2   +\|\bchi_{r,h}^N\|_{1,\Omega_{\mathrm{P}}}^2 + \|d_{\tau}\bchi_{y,h}^N\|_{1,\Omega_{\mathrm{P}}}^2 + \| \chi_{pp,h}^N \|^2_{0,\Omega_{\mathrm{P}}}  + \|\bchi_{r,h}^N \|_{0,\Omega_{\mathrm{P}}}^2   + \| d_{\tau}\bchi_{y,h}^N \|_{0,\Omega_{\mathrm{P}}}^2   + \| \bchi_{s,h}^N
  \|_{0,\Omega_{\mathrm{P}}}^2   \\ 
& \quad   + \| d_{\tau}\bchi_{r,h}^N \|_{0,\Omega_{\mathrm{P}}}^2  + \| r_N(\bu_r^{\mathrm{P}}) \|_{0,\Omega_{\mathrm{P}}}^2   + \| r_N(\bu_s^{\mathrm{P}}) \|_{0,\Omega_{\mathrm{P}}}^2 
  + \| r_N(\by_s^{\mathrm{P}}) \|_{1,\Omega_{\mathrm{P}}}^2 \bigr)  + C\bigl( \|\bchi_{y,h}^0\|_{1,\Omega_{\mathrm{P}}}^2 
   +\|\bchi_{r,h}^0\|_{1,\Omega_{\mathrm{P}}}^2    +\|d_{\tau}\bchi_{y,h}^0\|_{1,\Omega_{\mathrm{P}}}^2  \\ 
  & \quad      + \| \chi_{pp,h}^0 \|^2_{0,\Omega_{\mathrm{P}}} + \|\bchi_{r,h}^0 \|_{0,\Omega_{\mathrm{P}}}^2    + \| d_{\tau}\bchi_{y,h}^0 \|_{0,\Omega_{\mathrm{P}}}^2 + \| \bchi_{s,h}^0\|_{0,\Omega_{\mathrm{P}}}^2 + \| d_{\tau}\bchi_{r,h}^0 \|_{0,\Omega_{\mathrm{P}}}^2   + \| r_0(\bu_r^{\mathrm{P}}) \|_{0,\Omega_{\mathrm{P}}}^2   + \| r_0(\bu_s^{\mathrm{P}}) \|_{0,\Omega_{\mathrm{P}}}^2 + \| r_0(\by_s^{\mathrm{P}}) \|_{1,\Omega_{\mathrm{P}}}^2 \bigr)  \\ 
   & \quad   + \tau \sum_{n=1}^{N-1}  \bigl(\|d_{\tau}\bchi_{y,h}^n\|_{1,\Omega_{\mathrm{P}}}^2  +  \|d_{\tau}\bchi_{r,h}^n\|_{1,\Omega_{\mathrm{P}}}^2   + \|d_{\tau} d_{\tau}\bchi_{y,h}^n\|_{1,\Omega_{\mathrm{P}}}^2  + \| d_{\tau} \chi_{pp,h}^n \|^2_{0,\Omega_{\mathrm{P}}}  + \|d_{\tau} \bchi_{r,h}^n \|_{0,\Omega_{\mathrm{P}}}^2  + \| d_{\tau} d_{\tau}\bchi_{y,h}^n \|_{0,\Omega_{\mathrm{P}}}^2     \\
   & \quad  + \| d_{\tau}  \bchi_{s,h}^n
  \|_{0,\Omega_{\mathrm{P}}}^2 + \| d_{\tau}  d_{\tau}\bchi_{r,h}^n \|_{0,\Omega_{\mathrm{P}}}^2  + \| d_{\tau}  r_n(\bu_r^{\mathrm{P}}) \|_{0,\Omega_{\mathrm{P}}}^2  +  \| d_{\tau}  r_n(\bu_s^{\mathrm{P}}) \|_{0,\Omega_{\mathrm{P}}}^2  + \| d_{\tau} r_n(\by_s^{\mathrm{P}}) \|_{1,\Omega_{\mathrm{P}}}^2    \bigr)
   \\&  \quad  
+ \epsilon_1 \tau \sum_{n=1}^N \sum_{E \in \mathcal{E}_{\Sigma}} \frac{1}{h_E} 
  \|\bphi_{f,h}^n \cdot \bn_{\mathrm{S}}  + \bphi_{r,h}^n \cdot \bn_{\mathrm{P}} + d_{\tau} \bphi_{y,h}^n \cdot \bn_{\mathrm{P}} \|_{0,E}^2 
  \\ &   \quad  + \epsilon_1^{-1} \tau \sum_{n=1}^N  \sum_{E \in \mathcal{E}_{\Sigma}} \frac{1}{h_E} \bigl( \|\bchi_{f,h}^n \cdot \bn_{\mathrm{S}} + \bchi_{r,h}^n \cdot \bn_{\mathrm{P}} 
+ d_{\tau} \bchi_{y,h}^n \cdot \bn_{\mathrm{P}} \|_{0,E}^2  + \|r_n(\by_{s}^{\mathrm{P}}) \cdot \bn_{\mathrm{P}} \|_{0,E}^2  \bigr)  +  \tau \sum_{n=1}^N \| \mathcal{S}(\bu_f^{\mathrm{S},n} )\|_{1,\Omega_\mathrm{S}}^2. 
\end{align*} 
Finally,} combining all the estimates with the discrete Gronwall inequality~\cite{MR3904522}, the triangle inequality, and the approximation properties in~\eqref{ps}--\eqref{vh}, \eqref{sp3}, and~\eqref{mfe3}--\eqref{sz1} yields the assertion of the theorem.
\end{proof}

\end{document}